\newcommand{\blind}{1}
\newcounter{regime}
\newcommand{\regime}[1]{\refstepcounter{regime}\textbf{(Regime \theregime}\label{#1})}
\newcounter{mystep}   
\renewcommand{\themystep}{\arabic{mystep}} 
\newcommand{\mystep}[1]{
    \refstepcounter{mystep}
    \textbf{Step \themystep.}\label{#1}
}
\newtheorem{coro}{Corollary}
\newtheorem{theorem}{Theorem}
\newtheorem{lemma}{Lemma}
\newtheorem{definition}{Definition}
\newtheorem{prop}{Proposition}
\newtheorem{condition}{Condition}
\newtheorem{remark}{Remark}
\newtheorem*{proof}{\it Proof.}
\def\vec{\mathrm{vec}}
\def\T{\mathrm{T}}
\def\tr{\mathrm{tr}}
\def\E{\mathbb E}
\def\var{\mathrm{var}}
\def\T{\mathrm T}
\def\var{\mathrm{var}}
\def\cov{\mathrm{cov}}
\def\pr{\mathbb P}
\def\vec{\mathrm{vec}}
\def\diag{\mathrm{diag}}
\def\sumk{\sum_{k=1}^K}
\def\sumh{\sum_{h=1}^H}
\def\sumik{\sum_{i\in\Ik }}
\def\sumjk{\sum_{j\in\Ik }}
\def\nk{n_{[k]}}
\def\sumijk{\sum_{i,j\in \Ik }}
\def\Ik{\mathcal I_{[k] }}
\def\prodk{\prod_{k=1}^K}
\def\sumin{\sum_{i=1}^n}
\def\pik{w_{[k]}}
\def\Rb{\mathcal D_{[b^\dagger]}}
\def\sumRb{\sum_{(i,j)\in \Rb}}
\def\tauk{\tau_{[k]}}
\def\Mktwo{M_{[k]}^2}
\def\Mkthree{M_{[k]}^3}
\newtheorem{example}{Example}
\begin{document}

\def\spacingset#1{\renewcommand{\baselinestretch}
{#1}\small\normalsize} \spacingset{1}

\if1\blind
{
  \title{\bf Stratified Permutational Berry--Esseen Bounds and Their Applications to Statistics}

\author[1]{Pengfei Tian}
\author[2]{Fan Yang\textsuperscript{*}}
\author[3]{Peng Ding\textsuperscript{*}}

\affil[1]{\small Qiuzhen College, Tsinghua University,  \texttt{\href{mailto:tpf24@mails.tsinghua.edu.cn}{tpf24@mails.tsinghua.edu.cn}}}
\affil[2]{\small Yau Mathematical Sciences Center, Tsinghua University,  \texttt{\href{mailto:yangfan1987@tsinghua.edu.cn}{yangfan1987@tsinghua.edu.cn}}}
\affil[3]{\small Department of
Statistics, University of California, Berkeley, \texttt{\href{mailto:pengdingpku@berkeley.edu}{pengdingpku@berkeley.edu}}}

\footnotetext[1]{Correspondence should be addressed to Fan Yang and Peng Ding}
  
    \date{}
  \maketitle
}\fi
	
\if0\blind
{
	\title{\bf Stratified Permutational Berry--Esseen Bounds and Their Applications to Statistics}
	\maketitle
}\fi

\bigskip

\begin{abstract}
The stratified linear permutation statistic arises in various statistics problems, including stratified and post-stratified survey sampling, stratified and post-stratified experiments, conditional permutation tests, etc. Although we can derive the Berry--Esseen bounds for the stratified linear permutation statistic based on existing bounds for the non-stratified statistics, those bounds are not sharp, and moreover, this strategy does not work in general settings with heterogeneous strata with varying sizes. We first use Stein's method to obtain a unified stratified permutational Berry--Esseen bound that can accommodate heterogeneous strata. We then apply the bound to various statistics problems, leading to stronger theoretical quantifications and thereby facilitating statistical inference in those problems. 
\end{abstract}

\noindent
{\it Keywords:}  Causal inference; Design-based inference; Experimental design; Post-stratification; Stein's method; Survey sampling; Zero-bias transformation

\newpage

\spacingset{1.9} 

\section{Introduction to Stratified Permutational Berry--Esseen Bounds}\label{sec:intro}
\subsection{Stratified Linear Permutation Statistic and Overview of the Main Result}
The stratified linear permutation statistic plays a crucial role in various statistical problems. 
Consider $n$ units divided into $K$ strata $\{\Ik\}_{k=1}^K$, with the $k$-th stratum containing $|\Ik|=\nk$ units and $\sum_{k=1}^{K} \nk = n.$ A permutation is a bijection from $\{1,\ldots,n\}$ to itself. Under the stratified structure, we consider the permutation set $\Pi_K$ whose elements are the permutations that preserve the units within strata. Given a fixed $n$ by $n$ matrix $A=[a_{ij}]$ and a random permutation $\pi \sim \textup{Uniform}(\Pi_K)$, i.e., $ \pr(\pi = \pi_0) = \{{n_{[1]}! n_{[2]}! \cdots n_{[K]}!}\}^{-1}$ for any $ \pi_0 \in \Pi_K$, we define the stratified linear permutation statistic as:
\begin{align}\label{eq:matrix permutation}
    W_{A,\pi} = \sum_{i=1}^n a_{i\pi(i)} = \sum_{k=1}^{K} \sum_{i \in \Ik } a_{i\pi(i)}.
\end{align}
The randomness in $W_{A,\pi}$ arises from the randomness in $\pi$. This stratified permutation statistic covers a large number of examples, which will be reviewed in Section \ref{subsec:example}. We will prove the following result on the error in normal approximation of $W_{A, \pi}$:
\begin{theorem}[informal version]\label{thm:informal}
Standardize the stratified linear permutation statistic $W_{A,\pi}$ defined in \eqref{eq:matrix permutation} to have mean $0$ and variance $1$. There exists a universal constant $C$, such that
$$\sup_{t\in \mathbb R}|\pr(W_{A,\pi}\le t)-\Phi(t)|\le C\sumk \sumijk |a_{ij}|^3/\nk.$$
\end{theorem}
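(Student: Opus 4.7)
The plan is to combine Stein's method via the zero-bias transformation with the key structural observation that a uniform random permutation $\pi\in\Pi_K$ factorizes into $K$ mutually independent uniform permutations, one per stratum. After centering and standardizing, write $W \equiv W_{A,\pi}-\E W_{A,\pi} = \sum_{k=1}^K W_{[k]}$, where $W_{[k]}$ is the centered contribution of stratum $k$; the summands $W_{[1]},\ldots,W_{[K]}$ are then independent with variances $\sigma_{[k]}^2$ summing to $1$.

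The next step is to construct an explicit zero-bias coupling $W^\ast$ of $W$ by combining (i) the standard sum-of-independents recipe and (ii) a within-stratum zero-bias construction for single-matrix permutation statistics in the spirit of Goldstein. Concretely, I would sample an index $K^\dagger$ with $\pr(K^\dagger=k)=\sigma_{[k]}^2$ and replace $W_{[K^\dagger]}$ by a zero-bias counterpart $W_{[K^\dagger]}^\ast$ while leaving the other stratum summands intact; the counterpart is realized through a local surgery on $\pi$ restricted to $\Ik$ for $k=K^\dagger$, namely drawing a pair of positions with probability proportional to an explicit quadratic weight derived from the relevant entries of $A$, swapping their images, and adding a size-biased correction. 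The resulting coupling has the crucial localization property that $W-W^\ast$ depends on only $O(1)$ entries of $A$ from a single stratum.

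Given this coupling, I would invoke the standard Berry--Esseen inequality for zero-bias couplings, which bounds $\sup_t|\pr(W\le t)-\Phi(t)|$ by a combination of $\E|W-W^\ast|$ and a smoothing error of comparable order. Using independence across strata,
\[
\E|W-W^\ast| \;=\; \sum_k \sigma_{[k]}^2\,\E|W_{[k]}-W_{[k]}^\ast|,
\]
and a direct moment calculation with the explicit surgery shows that $\sigma_{[k]}^2\,\E|W_{[k]}-W_{[k]}^\ast|$ is bounded by a constant times $\nk^{-1}\sum_{i,j\in\Ik}|a_{ij}|^3$ after applying Hoeffding-type exchangeable moment identities inside each stratum. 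Summing over $k$ yields the target bound with $\delta=1$ under regularity conditions that prevent any $\sigma_{[k]}^2$ from collapsing relative to its third-moment budget.

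The hard part, and the source of the two regimes, is the fully heterogeneous case in which some strata are small or have weak within-stratum variance: the $\sigma_{[k]}^{-2}$ factors implicit in the calculation above can then blow up, and the clean third-moment identity is lost. To obtain a universal statement, I would partition the strata into a \emph{regular} group, where $\sigma_{[k]}^2$ is comparable to the stratum third moment, and an \emph{irregular} remainder, which is controlled directly through an Esseen-type smoothing inequality combined with a variance-concentration step whose bounds do not involve $\sigma_{[k]}^{-2}$. The square-root loss intrinsic to this smoothing argument is exactly what produces the weaker rate $\delta=1/2$ in full generality. The technical core, I expect, is to organize this regime-splitting so that the final bound is stated purely in terms of the matrix $A$ and the stratum sizes $\nk$, independently of the unknown within-stratum variances.
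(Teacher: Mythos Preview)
Your high-level plan---zero-bias coupling built by selecting a stratum $K^\dagger$ with probability $\sigma_{[k]}^2$ and performing a local swap inside it---matches the paper's construction, and your claim that $\E|W-W^\ast|\le C\sum_k n_{[k]}^{-1}\sum_{i,j\in\Ik}|a_{ij}|^3$ is correct and holds \emph{unconditionally}: in the computation the factor $\sigma_{[k]}^2$ from the selection probability cancels the $\sigma_{[k]}^{-2}$ that appears in the within-stratum $L_1$ bound, so there are no dangerous $\sigma_{[k]}^{-2}$ factors at all. This means your diagnosis of the hard part is off.

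The genuine gap is in how you pass from $\E|W-W^\ast|$ to a Kolmogorov bound. The ``standard Berry--Esseen inequality for zero-bias couplings'' you invoke only gives $d_{\textup{K}}\le C\sqrt{\E|W-W^\ast|}$ in general (via Wasserstein and the square-root conversion, or equivalently via smoothing with $\alpha=\sqrt{\E|W-W^\ast|}$), and this is precisely the source of the $\delta=1/2$ rate---it has nothing to do with irregular strata. To upgrade to $\delta=1$, the paper does \emph{not} partition strata; instead it bounds the smoothing-error term $\mathcal A_3$ more sharply by showing that the ``deleted'' statistic $S$ (what remains of $W$ after removing the $O(1)$ rows and columns touched by the surgery) satisfies a concentration inequality $\pr(S\in[t,t+\alpha]\mid\mathbf I)\le C'\alpha$. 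This is proved by building an exchangeable pair for $S$ and applying the concentration lemma of Chen and Fang (2015); the condition $\theta>0$ (roughly, no stratum with small $n_{[k]}$ dominates the total variance) is exactly what is needed for that concentration step to go through. Your proposed regime-splitting with a separate Esseen smoothing for ``irregular'' strata would not recover the classical rate, because the square-root loss you identify is intrinsic to smoothing and does not disappear by restricting attention to bad strata.
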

We will present the details of the standardization of $W_{A,\pi}$ in Section \ref{subsec:basic result and notation} and present the formal version of Theorem \ref{theorem:beb} in Section \ref{sec:BEB Stein}. In the next subsection, we will review some motivating examples in statistics.

\subsection{Motivating Examples}\label{subsec:example}
 Many classic statistics used in stratified sampling and stratified experiments can be represented in the form of \eqref{eq:matrix permutation}. We review four basic motivating examples in this subsection and will present more examples in Section \ref{sec:permute_test}. Define the index set $\Ik=\{\sum_{k'=1}^{k-1} n_{[k']}+1,\ldots,\sum_{k'=1}^{k}n_{[k']}\}$ and the stratum proportion $\pik=\nk/n$ for all the examples below. We sometimes also re-index the $k$-th stratum as $\Ik=\{{[k]1}, {[k]2},\ldots,{[k]\nk}\}$. Let $\bold 1_{m}$ and $\bold 0_m$ denote, respectively, vectors of all ones and zeros in $\mathbb R^m$. Let $A=\diag\{A_{[k]}\}_{k=1,\ldots,K}$ denote a block-diagonal matrix with $K$ submatrices, which has specific forms in the examples below.

\begin{example}[Stratified sampling]\label{example:sampling}
In stratified sampling \citep{cochran1977,bickel1984asymptotic}, $n$ units are divided into $K$ strata, with $\nk$ units in stratum $k$, $k=1,...,K$. In each stratum $k$, a sample of size $n_{[k]1}\leq n_{[k]}$ is randomly drawn without replacement. Define $n_{[k]0}=\nk- n_{[k]1}$. Given the outcomes $\{Y_i\}_{i=1}^n$, the target parameter is $\gamma=\sumk \pik \bar Y_{[k]}$, where $\bar Y_{[k]}=\nk^{-1}\sumik Y_i$ is the $k$-th stratum mean. Let $Z_i$ denote the inclusion indicator for unit $i$ and $\hat Y_{[k]}=n_{[k]1}^{-1}\sumik Z_i Y_i$ denote the $k$-th stratum sample mean. An unbiased estimator for $\gamma$ is $\hat \gamma=\sumk \pik\hat Y_{[k]}$. We can rewrite $\hat \gamma$ as $W_{A_{\textup{ss}},\pi}$ with
\begin{align*}
    A_{\textup{ss}}=\diag \left\{ w_{[k]}n_{[k]1}^{-1}\left(\begin{matrix}
   Y_{{[k]1}} \bold{1}^\T_{n_{[k]1}} &\bold{0}^\T_{n_{[k]0}}\\
   \vdots &\vdots\\
   Y_{{[k]\nk}} \bold{1}^\T_{n_{[k]1}}  &\bold{0}^\T _{n_{[k]0}}
\end{matrix}\right)\right\}_{k=1,\ldots,K},
        \end{align*}
        where ``$\textup{ss}$'' stands for ``stratified sampling''. 
\end{example}

\begin{example}[Stratified experiment]\label{example:exp}
In a stratified experiment 
\citetext{\citealp{imbens2015causal}, \citealp{ding2024first}}, $n$ units are divided into $K$ strata, with $\nk$ units in stratum $k$, $k=1,...,K$. 
In the $k$-th stratum, we randomly assign $n_{[k]1}$ units to the treatment group and $n_{[k]0}$ units to the control group, with $n_{[k]1}+n_{[k]0}=\nk.$ Let $Z_i$ denote the binary treatment condition for unit $i$, with $z=1$ and $z=0$ representing the treatment condition and the control condition, respectively. Let $Y_i(z)$ denote the potential outcome under treatment condition $z$, $z=0, 1$. The target parameter is the average treatment effect $\tau=\sumk \pik\tauk$, where $\tauk=\nk^{-1}\sumik \{Y_i(1)-Y_i(0)\}$ is the $k$-th stratum-specific average treatment effect. Let $\hat\tau_{[k]}=n_{[k]1}^{-1}\sumik Z_i Y_i-n_{[k]0}^{-1}\sumik(1-Z_i)Y_i$. The stratified estimator is $\hat\tau=\sumk \pik \hat\tau_{[k]}$. We can rewrite $\hat\tau$ as $W_{A_{\textup{sre}}, \pi}$ with
    \begin{align*}
    A_{\textup{sre}}=\diag \left\{  \pik \left(\begin{matrix}
  n_{[k]1}^{-1} Y_{{[k]1}}(1)\bold{1}_{n_{[k]1}}^\T  &-n_{[k]0}^{-1} Y_{{[k]1}}(0)\bold{1}_{n_{[k]0}}^\T \\
   \vdots &\vdots\\
    n_{[k]1}^{-1} Y_{{[k]\nk}}(1)\bold{1}_{n_{[k]1}}^\T  &-n_{[k]0}^{-1} Y_{{[k]\nk}}(0)\bold{1}_{n_{[k]0}}^\T 
        \end{matrix}\right)\right\}_{k=1,\ldots,K},
        \end{align*}
where ``$\textup{sre}$'' stands for ``stratified randomized experiment''. 
\end{example}

\begin{example}[Post-stratified sampling]\label{example:ps exp}
Consider a simple random sample from a population with $n$ units $\{Y_i\}_{i=1}^n$ where the parameter of interest is $\gamma =n^{-1}\sum_{i=1}^n Y_i.$ Post-stratification \citep{cochran1977,holt1979post} stratifies the samples by a predictive discrete covariate $X$ with $K$ levels after the sampling stage, with the goal of improving estimation efficiency. Let $Z_i$ denote the sampling indicator and $n_{[k]1}$ denote the number of sampled units from stratum $k$. Different from stratified sampling in Example \ref{example:sampling}, $n_{[k]1}$ is random in a post-stratified design. Consider the post-stratification estimator $\hat\gamma_{\textup{ps}}=\sumk \pik \hat{Y}_{[k]}$, where the mathematical symbols are the same as those in Example \ref{example:sampling} with $\Ik=\{i:X_i=k\}$. Let $\mathcal D_1=\{(n_{[1]1},\ldots, n_{[K]1}): \prodk n_{[k]1}\neq 0\}$ denote the event that all strata have at least one unit. Conditional on $\bold U_1=(n_{[1]1},\ldots,n_{[K]1})\in\mathcal D_1$, the estimator $\hat\gamma_{\textup{ps}}$ has the same distribution as the one in Example \ref{example:sampling} with the corresponding matrix denoted as $A_{\textup{pss}}=A_{\textup{ss}}(\bold U_1)$, where ``$\textup{pss}$'' stands for ``post-stratified sampling''. Theorem \ref{theorem:beb} is the theoretical foundation for analyzing  $\hat\gamma_{\textup{ps}}$ not only conditionally but also unconditionally.
\end{example}

\begin{example}[Post-stratified experiment]\label{example:ps}
Analogous to Example \ref{example:ps exp}, post-stratification can also be applied to randomized experiments \citep{miratrix2013}. Consider a completely randomized experiment of $n$ units which are post-stratified into $K$ strata based on a $K$-level covariate $X$.
We are interested in the average treatment effect $\tau=n^{-1} \sum_{i=1}^n \{Y_i(1)-Y_i(0)\}$. The post-stratification estimator equals 
$\hat \tau_{\textup{ps}}=\sumk  \pik \hat\tau_{[k]}$, where the mathematical symbols are the same as those in Example \ref{example:exp} with $\Ik=\{i:X_i=k\}$. Although the estimator shares the same form as the one in Example \ref{example:exp}, $n_{[k]1}$ and $n_{[k]0}$ are random in this example. 
Let $\mathcal D=\{(n_{[1]1},n_{[1]0},\ldots, n_{[K]1},n_{[K]0}): \prodk n_{[k]1}n_{[k]0}\neq 0\}$ denote the event that each stratum has at least one treatment unit and one control unit. Conditional on $\bold U=(n_{[1]1},n_{[1]0},\ldots, n_{[K]1},n_{[K]0})\in\mathcal D$, the estimator shares the same distribution as that in Example \ref{example:exp}, with the corresponding matrix $A_{\textup{pse}}=A_{\textup{sre}}(\bold U)$ conditional on $\bold U\in \mathcal D$, where ``$\textup{pse}$'' stands for ``post-stratified experiment''. Theorem \ref{theorem:beb} is the theoretical foundation for analyzing  $\hat\tau_\textup{ps}$ not only conditionally but also unconditionally.
\end{example}

\subsection{Literature Review, Open Questions, and Our Contributions}

The normal approximation to the non-stratified linear permutation statistics ($W_{A,\pi}$ with $K=1$) is a classic topic in probability and statistics. 
\citet{hoeffding1951combinatorial} extended the asymptotic normality of the permutation statistic in the product form of $\sum_{i=1}^n a_i b_{\pi(i)}$ \citep{wald1944statistical,noether1949theorem} to the general matrix form defined in \eqref{eq:matrix permutation}. 
\citet{motoo1956hoeffding}, \citet{hajek1961some} and \citet{fraser1956vector} further established Lindeberg--type central limit theorem (CLT) for the permutation statistic. \citet{von1976remainder} and \citet{ho1978l_p} provided its Berry--Esseen bounds (BEBs), achieving the order $O(n^{-1/2})$ under some boundedness conditions. \citet{bolthausen1984estimate} further relaxed their conditions, while keeping the order $O(n^{-1/2})$.
\cite{goldstein2005berry}, \citet{chen2011normal} and \citet{chen2015error} gave alternative proofs, and \citet{goldstein2007L1bound} studied the $L^1$ BEB based on the Wasserstein distance.

To motivate the theory, we focus on Example \ref{example:exp}, although the same discussion applies in other examples above. Stratified experiments are popular in empirical studies.
Various stratification regimes have been explored in experiments, such as paired experiments \citep{imai2009essential}, stratified factorial designs \citep{liu2022straclt}, finely stratified experiments \citep{fogarty2018}, threshold blocking designs \citep{higgins2016improving}, and stratified survey experiments \citep{tian2026survey}. These designs exhibit variations, with some featuring a large number of small strata, and some containing a small number of large strata. Real-world stratified designs often involve more complex and mixed regimes. We outline three stratification regimes as follows: 

\regime{large strata} The number of strata $K$ is large, and stratum sizes $\nk$'s are small.   

\regime{small strata} The number of strata $K$ is small, and the stratum sizes $\nk$'s are large.

\regime{mixture} 
Hybrid regimes can be mixtures of Regimes \ref{large strata} and \ref{small strata}.

Much of the previous work focused only on Regime \ref{large strata} \citep{imai2008variance,Abadie2008}  or Regime \ref{small strata} \citep{Imai2008,Pashley2017,de2024estimating}, separately. We will obtain a result suitable for all three regimes simultaneously. Furthermore, here we use ``large'' and ``small'' informally since it can be difficult to decide whether $K$ is large or $\nk$'s are large in some applications. Motivated by this difficulty, we will derive finite-sample bounds that can accommodate different values of $K$ and $n_{[k]}$'s.

Previous work has derived normal approximation results under the design-based framework on a case-by-case basis, including applications to survey samplings \citep{bickel1984asymptotic}, experiments \citep{lin2013,Li2016}, regression adjustment \citep{Liu2019}, factorial experiments \citep{liu2022straclt,shi2022berry} and rerandomization \citep{Li2018,wang2022rerandomization}. However, BEBs for stratified design are still missing in the design-based framework.

For post-stratification estimators, the unconditional CLT has not been rigorously established. By conditioning on the stratification variables $\bold U_1$ or $\bold U$, the post-stratification estimators exhibit asymptotic normality.
Here, the BEB offers a critical advantage over CLT: it not only implies the conditional asymptotic normality but also allows for deriving the unconditional asymptotic distribution by combining conditional distributions via the total probability law. Therefore, it is a critical theory tool for post-stratification inference.

Related to our work, \citet[Lemma 4]{dhault2024robust} established a CLT for the stratified permutation statistics in product form, and \cite{tuvaandorj2024combinatorial} established a Lindeberg-type CLT for the stratified permutation statistics in matrix form. 
However, they did not provide BEBs. Our BEB results complement the CLT of \citet{tuvaandorj2024combinatorial} through nonasymptotic bounds.

This paper aims to provide a unified non-asymptotic normal approximation result for the stratified linear permutation statistics. The remaining parts proceed as follows. Section \ref{sec:naive bound} presents BEBs for Regime \ref{large strata} and Regime \ref{small strata} based on the existing BEBs for non-stratified statistics. However, the bounds cannot deal with general regimes. Section \ref{sec:BEB Stein} introduces new general BEBs via Stein's method. The bound is useful for all possible regimes. Under mild conditions, it attains the classic $1/n^{1/2}$ order. Section \ref{sec:proof sketch} presents the sketch of the proof and derives some additional results as byproducts. Section \ref{sec:application} revisits the motivating examples. Section \ref{sec:permute_test} applies our new results to analyze stratified permutation tests. Section \ref{sec:additional resutl} discusses multivariate extensions. Supplementary Material contains additional theoretical tools and proofs.

\subsection{Basic result and notation}\label{subsec:basic result and notation}
As the basis for our discussion, 
we first establish the first two moments of $W_{A,\pi}$, which are also helpful for introducing the notation.

\begin{prop}\label{prop:var}
If $\pi\sim \textup{Uniform}(\Pi_K)$, the expected value of $W_{A,\pi}$ equals 
\begin{align*}
    \E \left[W_{A,\pi}\right] = \sumk n_{[k]} \bar{a}_{[k]\cdot\cdot},
\end{align*}
and the variance of $W_{A,\pi}$ equals
\begin{align*}
    \sigma_A^2 = \var\left(W_{A,\pi}\right) &= \sumk \frac{1}{\nk - 1} \sumijk \left(a_{ij}^2 -  \bar a_{[k] i\cdot} ^2 -  \bar a_{[k] \cdot j} ^2 +  \bar a_{[k] \cdot\cdot} ^2\right) \\
    &= \sumk \frac{1}{\nk - 1} \sumijk \left(a_{ij} -  \bar a_{[k] i\cdot}  -  \bar a_{[k] \cdot j}  +  \bar a_{[k] \cdot\cdot} \right)^2,
\end{align*}
where $ \bar a_{[k] i\cdot}  = {\nk}^{-1}\sumjk a_{ij}$,
 $ \bar a_{[k] \cdot j}  = {\nk}^{-1}\sumik a_{ij}$, and
 $ \bar a_{[k] \cdot\cdot}  = {\nk^{-2}}\sumijk a_{ij}$.
\end{prop}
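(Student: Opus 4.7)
The plan is to reduce the computation to single-stratum calculations, since a uniform permutation $\pi \sim \textup{Uniform}(\Pi_K)$ can be described equivalently as independent uniform permutations $\pi_{[k]}$ on each stratum $\mathcal{I}_{[k]}$. Consequently $W_{A,\pi}$ decomposes as a sum of independent terms $W_{[k]} = \sum_{i \in \mathcal{I}_{[k]}} a_{i\pi(i)}$, and both $\E(W_{A,\pi}) = \sumk \E(W_{[k]})$ and $\var(W_{A,\pi}) = \sumk \var(W_{[k]})$ follow by stratum-wise aggregation. This reduction pushes the whole problem onto the classical (non-stratified) Hoeffding permutation calculation inside each block.

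For the mean, I would use linearity: for $i \in \mathcal{I}_{[k]}$, $\pi(i)$ is uniformly distributed on $\mathcal{I}_{[k]}$, hence $\E(a_{i\pi(i)}) = \nk^{-1}\sumjk a_{ij} = \bar a_{[k] i \cdot}$. Summing over $i \in \mathcal{I}_{[k]}$ gives $\E(W_{[k]}) = \sumik \bar a_{[k] i \cdot} = \nk \bar a_{[k] \cdot \cdot}$, and summing over $k$ yields the stated expectation. For the variance, I would first compute $\E(W_{[k]}^2)$ by splitting the double sum $\sum_{i,j \in \mathcal{I}_{[k]}} \E(a_{i\pi(i)} a_{j\pi(j)})$ into the diagonal part $i=j$ and the off-diagonal part $i \neq j$. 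On the diagonal, $\E(a_{i\pi(i)}^2) = \nk^{-1}\sumjk a_{ij}^2$. Off the diagonal, $(\pi(i),\pi(j))$ is uniform on ordered pairs of distinct elements of $\mathcal{I}_{[k]}$, so
\[
\E(a_{i\pi(i)} a_{j\pi(j)}) = \frac{1}{\nk(\nk-1)} \sumik[\ell]\sumjk[m] a_{i\ell} a_{jm} - \frac{1}{\nk(\nk-1)} \sumjk[\ell] a_{i\ell}a_{j\ell}.
\]
Subtracting $(\E W_{[k]})^2 = \nk^2 \bar a_{[k]\cdot\cdot}^2$ and grouping terms with the common factor $1/(\nk-1)$ produces the first displayed variance formula, where the appearance of $\bar a_{[k] i \cdot}^2$, $\bar a_{[k] \cdot j}^2$, and $\bar a_{[k] \cdot \cdot}^2$ is exactly what results from combining the diagonal, off-diagonal, and mean-squared contributions.

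The remaining step is to verify the second (sum-of-squares) form. I would expand $(a_{ij} - \bar a_{[k]i\cdot} - \bar a_{[k]\cdot j} + \bar a_{[k]\cdot\cdot})^2$, sum over $(i,j)\in\mathcal{I}_{[k]}^2$, and use the identities $\sumik \bar a_{[k] i \cdot} = \sumjk \bar a_{[k] \cdot j} = \nk \bar a_{[k] \cdot \cdot}$ together with $\sumik \bar a_{[k] i\cdot}\bar a_{[k]\cdot j} \mapsto \nk^2 \bar a_{[k] \cdot \cdot}^2$ after a further sum. All cross terms collapse so that only $a_{ij}^2$, $\bar a_{[k] i\cdot}^2$, $\bar a_{[k]\cdot j}^2$, and $\bar a_{[k]\cdot\cdot}^2$ survive, matching the first expression.

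The calculation is entirely routine; the only place that needs care is the off-diagonal second-moment computation, where one has to handle the probability mass $1/\{\nk(\nk-1)\}$ on distinct ordered pairs and subtract the same-column contribution $\sum_\ell a_{i\ell} a_{j\ell}$. Careful bookkeeping of these terms, together with the stratum-wise independence, is the only real substance of the proof.
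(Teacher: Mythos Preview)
Your proposal is correct and follows essentially the same approach as the paper: stratum-wise independence reduces to a single-stratum Hoeffding calculation, handled by splitting the second moment into diagonal and off-diagonal parts. The only cosmetic difference is that the paper first passes to the centered array $a_{ij}^0 = a_{ij} - \bar a_{[k]i\cdot} - \bar a_{[k]\cdot j} + \bar a_{[k]\cdot\cdot}$ and computes $\var(a_{i\pi(i)}^0)$ and $\cov(a_{i\pi(i)}^0,a_{j\pi(j)}^0)$ directly; because the row and column sums of $a^0$ vanish, the off-diagonal covariance collapses in one line to $\frac{1}{\nk(\nk-1)}\sum_{i,s}(a_{is}^0)^2$, avoiding the subtraction of $(\E W_{[k]})^2$ and the extra bookkeeping you describe. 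Either route is routine and yields the same result.
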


From Proposition \ref{prop:var}, $W_{A,\pi} - \E [W_{A,\pi}] = \sumk\sumik (a_{i\pi(i)} -  \bar a_{[k] i\cdot}  - \bar a_{[k] \cdot \pi(i)} +  \bar a_{[k] \cdot\cdot} )$. Define $A^0 = [a_{ij}^0]$ with $a_{ij}^0 = a_{ij} - \bar a_{[k]  i\cdot} - \bar a_{[k]  \cdot j} + \bar a_{[k]  \cdot\cdot}$ for $i,j\in \Ik $ to center the statistic as $W_{A,\pi} - \E [W_{A,\pi}]=\sumk\sumik a_{i\pi(i)}^0 = W_{A^0,\pi}$. Centering does not change the variance so that $\sigma^2_{A^0} = \sigma^2_{A}$. If $\sigma^2_{A}>0$, we can define the scaled matrix $A^\textup{s}=[a_{ij}^\textup{s}]$, where ${a}_{ij}^\textup{s} = a_{ij}^0/\sigma_A$. The standardized statistic $W_{{A}^\textup{s},\pi}$ has mean $0$ and variance $1$. 

{\bf Notation.} 
Let $a_n\asymp b_n,a.s.$ denote that $a_n$ is asymptotically equivalent to $b_n$ almost surely, i.e., there exists a $C>0$ such that $a_n/b_n$ lies in the interval $[1/C,C]$ almost surely. Define the Wasserstein distance and Kolmogorov distance between random variables $A$, $B$ as $d_\textup{W}(A,B)=\int^{+\infty}_{-\infty} |\pr(A\le t)-\pr(B\le t)|\textup{d}t$ and $d_\textup{K}(A,B)=\sup_{t\in \mathbb R} |\pr(A\le t)-\pr(B\le t)|$, respectively.

\section{Suboptimal Results Based on Existing BEBs}\label{sec:naive bound}
\citet{berry1941accuracy} derived a BEB for the sum of IID random variables, \citet{esseen1942liapunov} derived a BEB for the sum of independent variables, and \citet{bolthausen1984estimate} derived a BEB for the linear permutation statistic with $K=1$. Both results are formulated in non-stratified settings.
We can immediately develop two BEBs for Regime \ref{large strata} and Regime \ref{small strata} based on those results. They are useful for their targeted regimes but useless for other regimes. 

We further simplify the BEBs in Corollaries \ref{coro:order smallstrata} and \ref{coro:order fewstrata} in some special cases to illustrate their limitations. The limitations motivate us to develop a unified result in Section \ref{sec:BEB Stein}.

\subsection{Leverage the BEB for the Sum of Independent Random Variables}

Define $W_{A,\pi[k]}=\sumik a_{i\pi(i)}$. We have $W_{A^\textup{s},\pi}=\sumk W_{A^\textup{s},\pi[k]}$ where $W_{A^\textup{s},\pi[k]}$'s are $K$ mutually independent random variables with $\E [W_{A^\textup{s},\pi[k]}]=0$ and  $\sumk \var (W_{A^\textup{s},\pi[k]})=1.$ Using the BEB for the sum of independent random variables $W_{A^\textup{s},\pi[k]}$'s \citep[Theorem 3.7]{chen2011normal}, 
we obtain the following bound in Proposition \ref{prop:smallstrata}.
\begin{prop}\label{prop:smallstrata}
If $\pi\sim \textup{Uniform}(\Pi_K)$, there exists a universal constant $C>0$ such that 
    $$\sup_{t\in \mathbb R}\Big| \pr(W_{ A^\textup{s},\pi}\le t)-\Phi (t)\Big| \le C  \sumk \nk \sumijk |a_{ij}^\textup{s}|^3.$$ 
\end{prop}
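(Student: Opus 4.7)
The plan is to exploit the strata-preserving structure of $\pi$ so that $W_{A^\textsc{s},\pi}$ decomposes into a sum of $K$ independent random variables, then invoke the classical Berry--Esseen bound for sums of independent random variables, then control the within-stratum third absolute moments by a crude power mean inequality.

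First I would observe that because $\pi\sim \textup{Uniform}(\Pi_K)$, the restrictions $\pi|_{\Ik}$ are mutually independent, each uniform on the permutations of $\Ik$. Hence the $K$ summands $W_{A^\textsc{s},\pi[k]}=\sumik a^\textsc{s}_{i\pi(i)}$ are mutually independent. Each is centered: by construction $a^\textsc{s}_{ij} = (a_{ij} - \bar a_{[k]i\cdot} - \bar a_{[k]\cdot j} + \bar a_{[k]\cdot\cdot})/\sigma_A$, and Proposition \ref{prop:var} (together with direct computation) yields $\E W_{A^\textsc{s},\pi[k]}=0$. Their variances sum to one by the standardization $\sigma^2_A=1$. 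We are therefore in the setup of the classical Berry--Esseen bound for independent sums \citep[Theorem 3.7]{chen2011normal}, which gives
$$\sup_{t\in\mathbb{R}}\Big|\pr(W_{A^\textsc{s},\pi}\le t)-\Phi(t)\Big|\le C\sumk \E\big|W_{A^\textsc{s},\pi[k]}\big|^3.$$

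The remaining step is to bound each within-stratum third absolute moment. The power mean inequality gives pointwise
$$\Big|\sumik a^\textsc{s}_{i\pi(i)}\Big|^3 \le \nk^{2}\sumik \big|a^\textsc{s}_{i\pi(i)}\big|^3.$$
Taking expectations and using that, under the uniform distribution on permutations of $\Ik$, the marginal law of $\pi(i)$ is uniform on $\Ik$ for each fixed $i\in\Ik$, I obtain
$$\E\big|W_{A^\textsc{s},\pi[k]}\big|^3 \le \nk^{2}\sumik \frac{1}{\nk}\sumjk \big|a^\textsc{s}_{ij}\big|^3 = \nk \sumijk \big|a^\textsc{s}_{ij}\big|^3.$$
Summing over $k$ yields the stated bound.

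The proof itself presents no real obstacle: it is essentially an assembly of standard ingredients once the strata-wise independence is noted. The genuine issue is sharpness, not difficulty. The power mean inequality completely ignores the cancellation inherent to the centered permutation sum within each stratum, inflating the estimate by a factor of order $\nk$ relative to what one would expect for a single stratum. This is exactly why the resulting bound is useful only in Regime \ref{large strata}, where each $\nk$ is small, and why a more delicate Stein's-method argument is needed in Section \ref{sec:BEB Stein} to obtain a unified result.
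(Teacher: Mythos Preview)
Your proof is correct and follows essentially the same approach as the paper: decompose $W_{A^\textsc{s},\pi}$ as a sum of $K$ independent centered summands, apply the Berry--Esseen bound for independent sums \citep[Theorem 3.7]{chen2011normal}, and then bound each $\E|W_{A^\textsc{s},\pi[k]}|^3$ via the power mean inequality $|\sum_i x_i|^3\le \nk^2\sum_i |x_i|^3$ followed by the marginal uniformity of $\pi(i)$. Your closing remark about the loss of sharpness from ignoring within-stratum cancellation is also in line with the paper's discussion of why this bound is only useful in Regime \ref{large strata}.
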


Proposition \ref{prop:smallstrata} is a direct result of the BEB for the sum of independent variables.  It is more useful in Regime \ref{large strata} with large $K$. The BEB for the sum of $K$ independent random variables asymptotically vanishes at the rate of $1/K^{1/2}$. For stratification regimes with $K$ strata, the convergence order remains $1/K^{1/2}$ under proper conditions described in Corollary \ref{coro:order smallstrata} below.

\begin{coro}\label{coro:order smallstrata}
  Assume that there exists a sequence of nonzero matrices $\{G_h\}_{h=1}^\infty$ where $G_h=[g_{h,ij}]$ is an $n_{h}$ by $n_{h}$ matrix and centered as $G^0_h=[g_{h,ij}^0]$ similar to the definition of $A^0$. Consider a random variable $\Omega$, with $\pr(\Omega = h) = w_h$ with $h = 1, 2, \ldots$ We consider $K$ random variables $\{\Omega_k\}_{k=1}^K$ independent and identically distributed with the same distribution as $ \Omega$. We form matrix $A=\diag\{G_{\Omega_k}\}_{k=1}^K$ and re-index it according to the subscript as $A=\diag\{A_{[k]}\}_{k=1}^K$. If $n_h$'s are uniformly bounded, larger than $1$ and
\begin{align}\label{eq:moment g}
\sum_{h=1}^\infty w_h n_h^{-2} \left(\sum_{1\le i,j\le n_h} |g_{h,ij}^0|^3\right)<\infty,
  \end{align}
 then as $K\rightarrow \infty$,  
  $$\sumk \nk \sumijk |a_{ij}^\textup{s}|^3 \asymp \frac{\sum_{h=1}^\infty w_{h}  n_h^{-2} \sum_{1\le i,j\le n_h} |g_{h,ij}^0|^3}{K^{1/2} (\sum_{h=1}^\infty w_{h}  n_h^{-2} \sum_{1\le i,j\le n_h} |g_{h,ij}^0|^2)^{3/2}} \quad \text{a.s.},$$
  where $\sum_{h=1}^\infty w_{h}  n_{h}^{-2} \sum_{1\le i,j\le n_h} |g_{h,ij}^0|^2$ and $\sum_{h=1}^\infty w_{h}  n_{h}^{-2} \sum_{1\le i,j\le n_h} |g_{h,ij}^0|^3$ are the second and third weighted moments, respectively.
\end{coro}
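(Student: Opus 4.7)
The plan is to reduce the claimed equivalence to the strong law of large numbers (SLLN) via the block-diagonal structure. Since $A=\diag\{G_{\Omega_k}\}_{k=1}^K$, the centering operation acts block-wise, giving $a_{[k]i,[k]j}^0=g_{\Omega_k,ij}^0$ and $n_{[k]}=n_{\Omega_k}$. Applying Proposition \ref{prop:var} to the centered matrix and directly rewriting the third-moment sum yields
\begin{align*}
\sigma_A^2=\sumk\frac{1}{n_{\Omega_k}-1}\sum_{1\le i,j\le n_{\Omega_k}}(g_{\Omega_k,ij}^0)^2,\qquad \sumk n_{[k]}\sumijk|a_{ij}^0|^3=\sumk n_{\Omega_k}\sum_{1\le i,j\le n_{\Omega_k}}|g_{\Omega_k,ij}^0|^3,
\end{align*}
both expressed as sums of $K$ i.i.d.\ random variables indexed by $\Omega_1,\ldots,\Omega_K$.

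Next I would verify integrability so SLLN applies. Let $N$ be a uniform upper bound on $n_h$. Condition \eqref{eq:moment g} together with $n_h\le N$ gives $\E[n_{\Omega}\sum_{i,j}|g_{\Omega,ij}^0|^3]\le N^3\sum_h w_h n_h^{-2}\sum_{i,j}|g_{h,ij}^0|^3<\infty$; the same two ingredients and the elementary bound $x^2\le 1+x^3$ for $x\ge 0$ control $\E[(n_\Omega-1)^{-1}\sum_{i,j}(g_{\Omega,ij}^0)^2]$ similarly. SLLN then produces, almost surely as $K\to\infty$,
\begin{align*}
\frac{\sigma_A^2}{K}\to \widetilde m_2:=\sum_{h=1}^\infty \frac{w_h}{n_h-1}\sum_{i,j}(g_{h,ij}^0)^2,\qquad \frac{1}{K}\sumk n_{[k]}\sumijk|a_{ij}^0|^3\to \widetilde m_3:=\sum_{h=1}^\infty w_h n_h\sum_{i,j}|g_{h,ij}^0|^3.
\end{align*}

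To conclude, I would reconcile the SLLN-native weights $(n_h-1)^{-1}$ and $n_h$ with the target weight $n_h^{-2}$. Assuming $n_h\ge 2$ (so that nontrivial permutations exist in each stratum), the ratios $n_h^2/(n_h-1)$ and $n_h^3$ lie in bounded subintervals of $(0,\infty)$, so $\widetilde m_2\asymp m_2$ and $\widetilde m_3\asymp m_3$, where $m_r:=\sum_h w_h n_h^{-2}\sum_{i,j}|g_{h,ij}^0|^r$. Combining with $|a_{ij}^\textup{s}|^3=|a_{ij}^0|^3/\sigma_A^3$ then yields
\begin{align*}
\sumk n_{[k]}\sumijk|a_{ij}^\textup{s}|^3\asymp \frac{Km_3}{(Km_2)^{3/2}}=\frac{m_3}{K^{1/2}m_2^{3/2}}\quad \text{almost surely,}
\end{align*}
as claimed. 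The main obstacle is essentially bookkeeping; the one genuinely subtle point is that the asymptotic equivalence implicitly requires $m_2>0$, i.e.\ some $G_h^0$ must be nonzero for an $h$ with $w_h>0$, since otherwise $\sigma_A=0$ with positive probability and $A^\textup{s}$ is undefined. I would flag this non-degeneracy as an implicit standing assumption.
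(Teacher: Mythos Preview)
Your proposal is correct and follows essentially the same route as the paper: express both the third-moment sum and $\sigma_A^2$ as averages of $K$ i.i.d.\ block-level summaries, verify integrability via the uniform bound on $n_h$ together with condition \eqref{eq:moment g}, apply the SLLN, and then use boundedness of $n_h$ to pass between the weights $n_h$, $(n_h-1)^{-1}$ and the target $n_h^{-2}$. The only cosmetic difference is in how integrability of the second-moment summand is checked: you use $x^2\le 1+x^3$, while the paper bounds $(\E|W_{A^0,\pi[k]}|^2)^{3/2}\le n_{[k]}\sum_{i,j}|a_{ij}^0|^3$ via a power-mean inequality; both arguments achieve the same end. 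Your remark that the asymptotic equivalence tacitly requires $m_2>0$ (equivalently, that some $G_h^0$ with $w_h>0$ is nonzero) is well taken.
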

In Corollary \ref{coro:order smallstrata}, the second and third weighted moments are of constant order if \eqref{eq:moment g} holds.
Proposition \ref{prop:smallstrata} and Corollary \ref{coro:order smallstrata} indicate that under the conditions of Corollary \ref{coro:order smallstrata}, the Kolmogorov distance between the standardized statistic $W_{A^\textup{s},\pi}$ and the standard normal distribution converges to $0$ at the rate of $1/K^{1/2}$. Therefore, when $n\asymp K$ which occurs in Regime \ref{large strata}, this BEB can achieve the classic convergence order $1/ n^{1/2}$. However, Proposition \ref{prop:smallstrata} is not useful in Regimes \ref{small strata} and \ref{mixture}.

\subsection{Leverage the BEB for the Non-stratified Linear Permutation Statistic}

Alternatively, we can first apply the classic unstratified BEB \citep{bolthausen1984estimate} for each stratum-specific permutation statistic, then combine the results to obtain the normal approximation for the sum. 
Define
\begin{align}\label{eq:R_A[K]^2}
R_{A[k]}^2=\var(W_{A,\pi[k]})/\var(W_{A,\pi}),\quad k=1,\ldots,K,
\end{align}
which measures the proportion of variability contributed by stratum $k$ and is bounded between $0$ and $1$. We have the following bound in Proposition \ref{prop:fewstrata}.

\begin{prop}\label{prop:fewstrata}
If $\pi\sim \textup{Uniform}(\Pi_K)$, there exists a universal constant $C>0$ such that
    $$\sup_{t\in \mathbb R}\Big| \pr(W_{ A^\textup{s},\pi}\le t)-\Phi (t)\Big| \le C\left\{\sumk \sumijk |a_{ij}^\textup{s}|^3/(\nk R_{A[k]}^2)\right\}^{1/2}.$$
\end{prop}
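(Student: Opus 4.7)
The plan is to apply a non-stratified BEB stratum by stratum, but to do so in \emph{Wasserstein} distance and only convert to Kolmogorov distance at the very end. The detour through $d_\textup{W}$ is precisely what produces the square root in the stated bound: a direct telescoping in $d_\textup{K}$ via \citet{bolthausen1984estimate} would only yield $C\sumk S_k/(\nk R_{A[k]}^3)$, with $S_k:=\sumijk|a_{ij}^\textsc{s}|^3$, which is not comparable to the target.

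Step 1. Apply the Wasserstein BEB of \citet{goldstein2007L1bound} for the non-stratified linear permutation statistic to each stratum. Since $\pi$ restricted to $\Ik$ is uniform on the permutations of $\Ik$, independently across strata, and $W_{A^\textsc{s},\pi[k]}$ has variance $R_{A[k]}^2$ with $A^\textsc{s}$ already centered within each stratum,
\begin{align*}
d_\textup{W}\!\left(W_{A^\textsc{s},\pi[k]}/R_{A[k]},\,Z\right)\le \frac{C\,S_k}{\nk R_{A[k]}^3}.
\end{align*}
Scaling by $R_{A[k]}$ (Wasserstein distance is positively $1$-homogeneous) then gives $d_\textup{W}\!\left(W_{A^\textsc{s},\pi[k]},\,R_{A[k]}Z_k\right)\le C S_k/(\nk R_{A[k]}^2)$.

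Step 2. Combine the stratum-level Wasserstein bounds via the sub-additivity of $d_\textup{W}$ under convolution of independent random variables. Let $Z_1,\ldots,Z_K$ be i.i.d.\ standard normal and independent of $\pi$, so $\sumk R_{A[k]}Z_k\sim N(0,1)$. Gluing optimal couplings within each stratum and taking them independent across strata defines a valid joint coupling (legitimate because both $\{W_{A^\textsc{s},\pi[k]}\}_k$ and $\{Z_k\}_k$ are mutually independent), yielding
\begin{align*}
d_\textup{W}\!\left(W_{A^\textsc{s},\pi},\,Z\right)\le \sumk d_\textup{W}\!\left(W_{A^\textsc{s},\pi[k]},\,R_{A[k]}Z_k\right)\le C\sumk \frac{S_k}{\nk R_{A[k]}^2}.
\end{align*}

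Step 3. Convert Wasserstein to Kolmogorov using the elementary bound $d_\textup{K}(X,Z)\le \sqrt{2\|\phi\|_\infty\, d_\textup{W}(X,Z)}$, valid whenever the comparison distribution has bounded density (here the standard-normal density, bounded by $1/\sqrt{2\pi}$). Combining with Step 2 delivers the stated inequality. The main insight, and the only non-mechanical part of the argument, is the decision to aggregate in $d_\textup{W}$ before taking a square root: routing through Wasserstein yields $R_{A[k]}^2$ in the denominator rather than the $R_{A[k]}^3$ that naive Kolmogorov telescoping produces, which is precisely the improvement needed when one of the strata contributes only a small fraction of the total variance.
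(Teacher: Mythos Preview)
Your proof is correct and follows essentially the same route as the paper: apply the non-stratified Wasserstein BEB of \citet{goldstein2007L1bound} within each stratum, aggregate across strata via the subadditivity of $d_\textup{W}$ for sums of independent random variables (together with positive homogeneity under scaling by $R_{A[k]}$), and then convert to $d_\textup{K}$ using $d_\textup{K}\le(2/\pi)^{1/4}\sqrt{d_\textup{W}}$. The only cosmetic difference is that the paper packages the three ingredients into a single lemma (citing \citet[Theorem~4.8]{chen2011normal} for the per-stratum Wasserstein bound) and writes the chain of inequalities in a slightly different order, but the logic is identical.
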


Since the Kolmogorov distance does not satisfy the sub-additivity property, we leverage the sub-additivity of the Wasserstein distance to combine the BEB for each stratum \citep{goldstein2007L1bound}.
The order $1/2$ in Proposition \ref{prop:fewstrata} comes from the fact that the Kolmogorov distance is bounded by the square root of the Wasserstein distance \citep{panaretos2019statistical}; see Remark \ref{Kol-Was} later in the paper. Since $\sumk R_{A[k]}^2=1,$ there must exist a stratum $k$ with $R_{A[k]}^2 \le 1/K$. Consequently, Proposition \ref{prop:fewstrata} is useful when $K$ is small as in Regime \ref{small strata} but not useful when $K$ diverges as in Regime \ref{large strata}. 

In Corollary \ref{coro:order fewstrata} below, we discuss the convergence order of the BEB in Proposition \ref{prop:fewstrata}.

\begin{coro}\label{coro:order fewstrata}
Assume that the elements of $A^0$ are independently sampled from a population $Y$ with $\E[|Y|^{6+\varepsilon}]<\infty$ for some $\varepsilon>0$. Consider a homogeneous stratified design where there exist $0<\underline{c}<\overline{c}$ such that for any stratum $k$, $ R_{A[k]}^2 \in  [\underline c/K,\overline c/K]$, $k=1,\ldots,K$. As $\min_{1\le k\le K} n_{[k]}\rightarrow \infty$, 
    we have 
    $$\left\{\sumk \sumijk |a_{ij}^\textup{s}|^3/(\nk R_{A[k]}^2)\right\}^{1/2}\asymp \frac{K^{1/2}(\E[|Y|^3])^{1/2} }{n^{1/4}(\E[|Y|^2])^{3/4}} \quad \text{a.s.}.$$ 
\end{coro}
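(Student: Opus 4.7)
The plan is to reduce the displayed expression to a ratio of stratum-wise moment sums and then identify each piece via the strong law of large numbers, using the homogeneity assumption to absorb the $R_{A[k]}^2$ factors into a single power of $K$.

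First, I would use the scaling $a_{ij}^\textsc{s} = a_{ij}^0/\sigma_A$ together with the homogeneity bound $R_{A[k]}^2 \in [\underline{c}/K, \overline{c}/K]$ to rewrite
$$\sumk \sumijk |a_{ij}^\textsc{s}|^3/(\nk R_{A[k]}^2) \;\asymp\; \frac{K}{\sigma_A^3} \sumk \frac{1}{\nk} \sumijk |a_{ij}^0|^3.$$
This reduces the problem to evaluating $\sigma_A^2$ and the third-moment-type sum in the numerator separately.

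Next, by Proposition \ref{prop:var}, $\sigma_A^2 = \sumk (\nk - 1)^{-1} \sumijk (a_{ij}^0)^2$, and the entries of $A^0$ are i.i.d.\ draws from $Y$. The strong law of large numbers applied within each stratum gives $\nk^{-2} \sumijk (a_{ij}^0)^2 \to \E|Y|^2$ and $\nk^{-2} \sumijk |a_{ij}^0|^3 \to \E|Y|^3$ almost surely as $\nk \to \infty$. Summing these stratum-wise estimates yields $\sigma_A^2 \sim n \E|Y|^2$ and $\sumk \nk^{-1} \sumijk |a_{ij}^0|^3 \sim n \E|Y|^3$ almost surely. Plugging these in and taking the square root produces
$$\left\{\frac{K \cdot n \E|Y|^3}{(n \E|Y|^2)^{3/2}}\right\}^{1/2} = \frac{K^{1/2} (\E|Y|^3)^{1/2}}{n^{1/4} (\E|Y|^2)^{3/4}},$$
which is the claimed rate.

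The main obstacle is promoting the per-stratum SLLN to a statement that holds \emph{uniformly} in $k$ almost surely, so that the sums over $k$ can be aggregated while both $K$ and $\min_k \nk$ grow. I would attack this via Chebyshev's inequality: since $\var(|Y|^3) \le \E|Y|^6 < \infty$, one has
$$\pr\left(\left|\nk^{-2}\sumijk |a_{ij}^0|^3 - \E|Y|^3\right| > \epsilon\right) \le \frac{\var(|Y|^3)}{\epsilon^2 \nk^2},$$
after which a union bound over $k \le K$ together with Borel--Cantelli along the sample-size sequence delivers the uniform almost-sure convergence, leveraging the growth bound $K = O(n^{1-\varepsilon})$ and $\min_k \nk \to \infty$. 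The sixth-moment assumption on $Y$ is precisely what controls the variance of $|Y|^3$ at this step, and if the plain Chebyshev bound proves insufficient for small $\varepsilon$, a Marcinkiewicz--Zygmund-type refinement using the same moment budget closes the gap.
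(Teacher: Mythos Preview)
Your reduction via the homogeneity bound $R_{A[k]}^2\in[\underline c/K,\overline c/K]$ and the identification of the numerator and denominator with $\E|Y|^3$ and $\E|Y|^2$ is exactly the route the paper takes. The difference is in how you establish the almost-sure convergence of the aggregate moment sums, and there your argument has a real gap for small $\varepsilon$.

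Your plan is to prove \emph{uniform} stratum-wise convergence of $\nk^{-2}\sumijk|a_{ij}^0|^3\to\E|Y|^3$ via Chebyshev, a union bound over $k$, and Borel--Cantelli. The Chebyshev bound gives $\pr(|Y_{[k]}^{(3)}|>\epsilon)\le \var(|Y|^3)/(\epsilon^2\nk^2)$, so the union bound yields $\sumk \nk^{-2}$. In the balanced situation $\nk\asymp n/K$ this is of order $K^3/n^2=n^{1-3\varepsilon}$, which is summable in $n$ only when $\varepsilon>2/3$. Your proposed Marcinkiewicz--Zygmund refinement does not rescue the case $\varepsilon\le 2/3$ under the stated hypothesis: with only $\E|Y|^6<\infty$ you control at most the second moment of $|Y|^3$, which is precisely the Chebyshev input you already used; pushing to a $p$-th moment Markov bound would require $\E|Y|^{3p}<\infty$ for $p>2$.

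The paper avoids the union bound entirely by applying Chebyshev and Borel--Cantelli directly to the \emph{aggregate} centered sum. Writing $Y_{[k]}^{(3)}=\nk^{-2}\sumijk|a_{ij}^0|^3-\E|Y|^3$, independence across strata gives
\[
\var\!\left(\sumk \frac{\nk}{n}\,Y_{[k]}^{(3)}\right)=\sumk \frac{\nk^2}{n^2}\cdot\frac{\var(|Y|^3)}{\nk^2}=\frac{K\,\var(|Y|^3)}{n^2}=O(n^{-1-\varepsilon}),
\]
which is summable for every $\varepsilon>0$, so Borel--Cantelli yields $n^{-1}\sumk \nk^{-1}\sumijk|a_{ij}^0|^3\to\E|Y|^3$ a.s.\ without any uniformity. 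The denominator is handled identically with $|a_{ij}^0|^2$ in place of $|a_{ij}^0|^3$. Aggregating before bounding is the missing step; once you make that switch, your argument and the paper's coincide.
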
 

In Corollary \ref{coro:order fewstrata}, the BEB in Proposition \ref{prop:fewstrata} is of order $O(K^{1/2}/n^{1/4})$ if each stratum contributes asymptotically the same order to the variance and outcomes have finite $6+\varepsilon$th moments. Therefore, Corollary \ref{coro:order fewstrata} is useful when $K=o( n^{1/2} )$, which occurs in Regime \ref{small strata}. We cannot derive CLTs based on this bound when the growth rate of $K$ exceeds $ n^{1/2}$. Therefore, the bounds are not useful in Regimes \ref{large strata} and \ref{mixture}.  

Both Proposition \ref{prop:smallstrata} and Proposition \ref{prop:fewstrata} are useful for one particular regime, but cannot cover the other and the general Regime \ref{mixture}. We can combine Propositions \ref{prop:smallstrata} and \ref{prop:fewstrata} to derive a $L^1$ BEB using the sub-additivity of the Wasserstein distance (see Section \ref{subsec:combination of proposition 23} in the Supplementary Material). However, it is not easy to use this strategy to obtain the BEB with the optimal convergence rate because the Kolmogorov distance lacks sub-additivity.
In Section \ref{sec:BEB Stein} below, we will derive a BEB that can deal with all possible stratification regimes.

\section{Stratified Permutational BEBs Based on Stein's Method}\label{sec:BEB Stein}

\subsection{Main Theorem}
\newtheorem{theorem1}{Theorem}
Now we present the main theorem of the paper.
\begin{theorem1}\label{theorem:beb}
If $\pi \sim \textup{Uniform}(\Pi_K)$,  there exists a universal constant $C$, such that
$$\sup_{t\in \mathbb R}\Big| \pr(W_{ A^\textup{s},\pi}\le t)-\Phi (t)\Big| \le  C \sum_{k=1}^K\sumijk |a_{ij}^\textup{s}|^3/ \nk.$$
\end{theorem1}

Theorem \ref{theorem:beb} bounds the Kolmogorov distance between the standardized permutation statistic and the standard normal distribution. When the upper bound tends to zero, the stratified linear permutation statistic is asymptotically normal. 

Motivated by \cite{bolthausen1984estimate}, \cite{goldstein2005berry} and \cite{chen2011normal}, we construct zero-bias transformation \citep{goldstein1997stein} to prove the result in Theorem \ref{theorem:beb}. This result holds for all stratification regimes. 

  Corollary \ref{coro:order general-beb} gives the convergence order of the bounds in Theorem \ref{theorem:beb}.

\begin{coro}\label{coro:order general-beb}
    Assume that the elements of $A^0$ are independent and identically distributed samples from $Y$ with $\E [|Y|^{6+\varepsilon}]<\infty$ for some constant $\varepsilon>0$. As $\min_{k=1,\ldots,K} \nk \rightarrow \infty,$ we have
$$\sumk \sumijk |a_{ij}^\textup{s}|^3/\nk \asymp \frac{\E[|Y|^3] }{ n^{1/2} (\E[|Y|^2])^{3/2}} \quad \text{a.s.}.$$
\end{coro}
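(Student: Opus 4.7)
The quantity to analyze is, after unpacking the standardization $a_{ij}^\textsc{s} = a_{ij}^0/\sigma_A$,
\begin{equation*}
Q_n := \sumk \sumijk |a_{ij}^\textsc{s}|^3/\nk \;=\; \sigma_A^{-3}\,V_n, \qquad V_n := \sumk \nk^{-1}\sumijk |a_{ij}^0|^3.
\end{equation*}
My plan is to show separately that $\sigma_A^2/n \to \E Y^2$ and $V_n/n \to \E|Y|^3$ almost surely, and then combine to read off the claimed $\asymp$ relation. The two convergences will be proved by essentially identical moment computations, invoking Proposition \ref{prop:var} and a Chebyshev--Borel--Cantelli argument.

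For $\sigma_A^2$, Proposition \ref{prop:var} gives $\sigma_A^2 = \sumk (\nk-1)^{-1}\sumijk (a_{ij}^0)^2$. Treating the $(a_{ij}^0)^2$ as iid copies of $Y^2$, direct calculation yields
\begin{equation*}
\E[\sigma_A^2] = \sumk \frac{\nk^2}{\nk-1}\,\E Y^2 = n\,\E Y^2 + O(K),\qquad \var(\sigma_A^2) = \sumk \frac{\nk^2}{(\nk-1)^2}\var(Y^2) = O(K),
\end{equation*}
where both are finite because $\E|Y|^6<\infty$ implies $\E Y^4<\infty$, and the ratios $\nk/(\nk-1)$ are uniformly bounded since $\min_k \nk \to \infty$. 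The parallel computation for $V_n$ gives $\E V_n = n\,\E|Y|^3$ and $\var(V_n) = \sumk \nk^{-2}\cdot \nk^2 \var(|Y|^3) = O(K)$, finite precisely because $\var(|Y|^3) = \E Y^6 - (\E|Y|^3)^2 < \infty$ under the sixth-moment assumption.

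The hypothesis $K = O(n^{1-\varepsilon})$ enters at this point: dividing by $n$, both $\var(\sigma_A^2/n)$ and $\var(V_n/n)$ are $O(n^{-1-\varepsilon})$, which is summable in $n$. Chebyshev's inequality combined with Borel--Cantelli, applied after coupling the arrays across different $n$ through a single infinite iid sequence populating the matrices, then yields $\sigma_A^2/n \to \E Y^2$ and $V_n/n \to \E|Y|^3$ almost surely. Combining gives $n^{1/2}\,Q_n \to \E|Y|^3/(\E Y^2)^{3/2}$ almost surely, which is exactly the $\asymp$ claim since this limit is a positive constant. The main obstacle I anticipate is this last coupling step: the almost-sure statement requires a coherent probability space as $n$ grows and the stratum structure changes, and it is the summability $\var(\cdot/n) = O(n^{-1-\varepsilon})$---enabled precisely by $K = O(n^{1-\varepsilon})$ rather than the weaker $K = o(n)$---that secures Borel--Cantelli. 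All other steps are routine once the iid structure of the $a_{ij}^0$ entries is exploited.
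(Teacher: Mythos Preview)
Your proposal is correct and follows essentially the same approach as the paper: write the quantity as a ratio of a third-moment sum and $\sigma_A^3$, compute the mean and variance of each piece under the iid assumption, observe that the variances of the normalized quantities are $O(K/n^2)=O(n^{-1-\varepsilon})$ and hence summable, and invoke Chebyshev plus Borel--Cantelli to obtain almost-sure convergence. The paper organizes the calculation by introducing stratum-level centered variables $Y_{[k]}^{(2)}$ and $Y_{[k]}^{(3)}$ but the content is identical; your observation that $\min_k n_{[k]}\to\infty$ sharpens the paper's $1\le \sum_k n_{[k]}^2/\{n(n_{[k]}-1)\}\le 2$ to an exact limit, which is fine since only the $\asymp$ conclusion is needed.
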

\begin{table}[H]
\centering
\caption{Convergence orders of the BEBs. We impose additional super-population moment conditions to help interpret the upper bounds.}
\resizebox{0.9\textwidth}{!}{
\begin{tabular}{c|ccc}
\toprule
BEB & Proposition \ref{prop:smallstrata} & Proposition \ref{prop:fewstrata} &  Theorem \ref{theorem:beb} \\
 \midrule
convergence order      & $1/K^{1/2}$     & $K^{1/2}/n^{1/4}$         & $1/n^{1/2}$\\ 
\midrule
Condition    & $3$rd moment finite   &$6+\varepsilon$th moment finite, homogeneity  &$6+\varepsilon$th moment finite\\
\bottomrule
\end{tabular}}
\label{tab:convergence order}
\end{table}
Table \ref{tab:convergence order} summarizes the convergence orders of our BEBs under proper conditions.
From Corollary \ref{coro:order general-beb}, the convergence order of the bound in Theorem \ref{theorem:beb} is $1/n^{1/2}$, which is no worse than that in Proposition \ref{prop:smallstrata} and Proposition \ref{prop:fewstrata}.

\subsection{Corollaries}
In this subsection, we present two corollaries. Corollary \ref{coro:bolthausen1984} shows that Theorem \ref{theorem:beb} includes the result of \cite{bolthausen1984estimate} in the non-stratified setting as a special case. 

\begin{coro}[\cite{bolthausen1984estimate}]\label{coro:bolthausen1984}
   When $K=1$ and $\pi\sim \textup{Uniform}(\Pi_K)$, there exists a universal constant $C>0$, such that
   \begin{align}\label{eq:bolth}
      \sup_{t\in \mathbb R}\Big| \pr(W_{ A^\textup{s},\pi}\le t)-\Phi (t)\Big| \le C \sumin\sum_{j=1}^n |a_{ij}^\textup{s}|^3/ n. 
   \end{align}
\end{coro}

Corollary \ref{coro:apply BEB} below provides a modified version for statistical applications.
The literature on combinatorial CLT presents bounds in the form of row and column demeaned value of matrix $A$, i.e., $a_{ij}^{\textsc{s}}$, because this form is symmetric in row and column. However, $a_{ij}^\textup{s}$ involves both $Y(0)$ and $Y(1)$ in Examples \ref{example:exp} and \ref{example:ps}, which is less interpretable than the columnwise demeaned value of matrix $A$ as $a_{ij}^{\textup{0c}}=a_{ij}-\bar a_{[k]\cdot j},i,j\in \Ik,k=1,\ldots,K$, which only involves $Y(1)$ or $Y(0)$. By the basic inequality between the centered and original moment \citep[Theorem 2.1]{mori2009sharp}, there exists a constant $C$ such that $\sumijk |a_{ij}^0|^3 \le  C \sumijk |a_{ij}^{0\textup{c}}|^3$. Therefore, by Theorem \ref{theorem:beb}, we have the following BEB in the form of $a_{ij}^{0\textup{c}}$.
\begin{coro}
    \label{coro:apply BEB}
If $\pi \sim \textup{Uniform}(\Pi_K)$, there exists a constant $C$ such that $$\sup_{t\in \mathbb R}\Big| \pr(W_{A^\textup{s},\pi}\le t)-\Phi (t)\Big| \le C \sum_{k=1}^K\sumijk |a_{ij}^\textup{0c}|^3/ (\sigma^3_A\nk).$$
\end{coro} 

Although Corollary \ref{coro:apply BEB} is looser than Theorem \ref{theorem:beb}, it can provide more interpretable bounds for statistical applications. We will apply Corollary \ref{coro:apply BEB} to the motivating examples in Section \ref{sec:application}.

\section{Sketch of the Proof and Additional Results}\label{sec:proof sketch} 
Although we relegate the technical details to the Supplementary Material, we present the main steps of our proof. This section not only gives the proving ideas but also introduces byproducts of the proof that are of independent interest. 
\subsection{Review of Stein's Method}
Our proof employs Stein's method \citep{stein1972bound,chen2011normal,chen2021stein} for normal approximation.  \cite{stein1972bound} proved that a random variable $W$ has a standard normal distribution if and only if $\E[ f'(W)]=\E[Wf(W)]$ for all absolutely continuous functions $f$ for which the above expectations exist. 
Let $N\sim\mathcal N(0,1)$. For any measurable function $h(\cdot)$ such that $\E[|h(N)|]<\infty$, define 
\begin{align}\label{eq:f--h}
    f(x)=e^{x^2/2}\int_{-\infty}^x(h(t)-\E [h(N)])e^{-t^2/2}\textup{d} t.
\end{align}
The $f$ in \eqref{eq:f--h} satisfies
\begin{align}\label{eq:identi}
h(x)-\E [h(N)]=f'(x)-xf(x),
\end{align}
for Lebesgue-a.e.\ $x\in\mathbb R$. Moreover, we take a version of $f'$ such that \eqref{eq:identi} holds for all $x\in\mathbb R$; see, for example, Remark~3.2.3 of \citet{nourdin2012normal}.
Choose $x=W_{A^{\textup{s}},\pi}$ in \eqref{eq:identi} and take expectation on both sides to obtain
\begin{align}\label{eq:stein}
   \E [h(W_{A^\textup{s},\pi})]-\E [h(N)]=\E [f'(W_{A^\textup{s},\pi})]-\E[W_{A^\textup{s},\pi}f(W_{A^\textup{s},\pi})].
\end{align}
If we choose  $h(x)=\bold{1}(x\le t)$ with $t\in \mathbb R$, the left-hand side of \eqref{eq:stein} reduces to $\pr(W_{A^\textup{s},\pi}\le t)-\Phi(t)$. Thus, Stein's method bounds the right-hand side of \eqref{eq:stein} to obtain the normal approximation for $W_{A^\textup{s},\pi}$.

\subsection{Zero-Bias Transformation}\label{subsec:pi dagger construct}

Our proof employs the zero-bias transformation introduced by \citet{goldstein1997stein}: 
\begin{definition}[Zero-Bias Transformation]
A random variable $W^*$
is a zero-bias transformation of a random variable $W$
if and only if $\var(W)\E [f'(W^*)] = \E [W f(W)]$ for all absolutely continuous functions $f$ for which these expectations exist.
\end{definition}
Using zero-bias transformation, we can further simplify \eqref{eq:stein}:
\begin{align}\label{eq:zb form}
    \E [h(W_{A^\textup{s},\pi})]-\E [h(N)]=&  \E [f'(W_{A^\textup{s},\pi})]-\E[W_{A^\textup{s},\pi} f(W_{A^\textup{s},\pi})]\notag\\
    =&\E [f'(W_{A^\textup{s},\pi})]-\E[f'(W_{A^\textup{s},\pi}^*)].
\end{align}
This representation is convenient for algebraic manipulation, and we will exploit it in Section~\ref{subsec:algebra cal} to obtain explicit bounds on the right-hand side of \eqref{eq:zb form}.

We construct a zero-bias transformation of $W_{A^\textup{s},\pi}$ as follows. Define $\rho_{i,j}$ as the permutation that swaps $i$ and $j$, leaving all other elements unchanged.
\mbox{}\\
\mystep{step:one} We randomly sample $B^\dagger$ from $\{1,\ldots,K\}$ with $\pr(B^\dagger=k)=R_{A[k]}^2$ and randomly sample $I^\dagger, J^\dagger, P^\dagger, Q^\dagger$ from $\mathcal I_{[B^\dagger]}$ with probability 
$$\pr(I^\dagger = i , J^\dagger =j,P^\dagger =p,Q^\dagger =q \mid B^\dagger =k)=\frac{(a_{ip}^\textup{s}+a_{jq}^\textup{s}-a_{iq}^\textup{s}-a_{jp}^\textup{s})^2}{4\nk^2(\nk-1) R_{A[k]}^2},$$
for any $i,j,p,q\in \mathcal I_{[B^\dagger]}$.
We generate $\pi^\dagger$ based on $I^\dagger=i,J^\dagger=j,P^\dagger=p,Q^\dagger=q$ as follows:
$$\pi^\dagger=\begin{cases}
\pi\rho_{\pi^{-1}(p),j} & \text{if}~~q=\pi(i),p\neq \pi(j),\\
\pi\rho_{\pi^{-1}(q),i} & \text{if}~~q\neq\pi(i),p= \pi(j),\\
\pi\rho_{\pi^{-1}(p),i}\rho_{\pi^{-1}(q),j} & \text{otherwise}.
\end{cases}$$
Based on $I^\dagger,J^\dagger$ and $\pi^\dagger$, we define  $\pi^\ddagger=\pi^\dagger  \rho_{I^\dagger, J^\dagger}$. 
\\
\mystep{step:two} We construct $W_{A^\textup{s},\pi}^*=U W_{A^\textup{s},\pi^\dagger}+(1-U)W_{A^\textup{s},\pi^\ddagger}$, where $U$ follows a uniform distribution on $[0,1]$ and is independent of all the random variables in Step \ref{step:one}. 

\begin{lemma}\label{thm:zerobias}
$W^*_{A,\pi}$ is a zero-bias transformation of $W_{A,\pi}$.
\end{lemma}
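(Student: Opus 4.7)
The plan is to exploit the independence of within-stratum permutations and reduce the verification to the non-stratified zero-bias construction of \citet{goldstein1997stein}. Since $\pi\sim\textup{Uniform}(\Pi_K)$ restricts to independent uniform permutations on each stratum, the stratum-wise statistics $W_{A,\pi[k]}=\sum_{i\in\Ik}a_{i\pi(i)}$ are mutually independent with $\var(W_{A,\pi[k]})=R_{A[k]}^2\sigma_A^2$; after the reduction of Section \ref{subsec:simplication} we may assume $\E W_{A,\pi[k]}=0$. A standard mixture identity for independent centered sums says: for $Y=\sum_k Y_k$ with independent centered $Y_k$, the variable $Y-Y_{K^*}+Y_{K^*}^*$ is a zero-bias of $Y$ whenever $\pr(K^*=k)=\var(Y_k)/\var(Y)$ and $Y_k^*$ is an independent zero-bias of $Y_k$. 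This is verified in one line by conditioning on $\{Y_{k'}\}_{k'\ne K^*}$ and using the defining property of $Y_{K^*}^*$.

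Our construction matches this mixture form exactly: $B^\dagger$ is drawn with the correct probability $R_{A[k]}^2$, and every transposition in the definition of $\pi^\dagger$ lies inside $\mathcal I_{[B^\dagger]}$, so $\pi^\dagger$ and $\pi^\ddagger$ agree with $\pi$ outside that stratum. Writing $W_{A,\pi[k]}^*:=U W_{A,\pi^\dagger[k]}+(1-U)W_{A,\pi^\ddagger[k]}$, we obtain $W_{A,\pi}^*=\sum_{k\ne B^\dagger}W_{A,\pi[k]}+W_{A,\pi[B^\dagger]}^*$, and the mixture identity reduces the problem to showing that, conditional on $B^\dagger=k$, $W_{A,\pi[k]}^*$ is a zero-bias of the single-stratum statistic $W_{A,\pi[k]}$.

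For the within-stratum verification, independence of $U\sim\textup{Uniform}(0,1)$ combined with the mean-value identity $\int_0^1 f'(b+u(a-b))\,\mathrm{d}u=(f(a)-f(b))/(a-b)$ gives $\E[f'(W_{A,\pi[k]}^*)\mid B^\dagger=k]=\E[(f(W_{A,\pi^\dagger[k]})-f(W_{A,\pi^\ddagger[k]}))/\Delta_k]$ with $\Delta_k:=W_{A,\pi^\dagger[k]}-W_{A,\pi^\ddagger[k]}=\pm(a_{I^\dagger P^\dagger}+a_{J^\dagger Q^\dagger}-a_{I^\dagger Q^\dagger}-a_{J^\dagger P^\dagger})$, the sign determined by which of the three transposition cases applies. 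Hence $\Delta_k^2$ cancels the weight in the law of $(I^\dagger,J^\dagger,P^\dagger,Q^\dagger)$ and leaves a four-index sum. On the other side, an exchangeable-pair symmetrization within stratum $k$ (replace $\pi$ by $\pi\tau_{ij}$ and use the zero row and column means of $A$ to kill the singly-indexed averages) expresses $\E[W_{A,\pi[k]}f(W_{A,\pi[k]})]$ as $(4n_{[k]})^{-1}\sum_{i,j\in\Ik}\E[\Delta_{ij}(\pi)(f(W_{A,\pi\tau_{ij}[k]})-f(W_{A,\pi[k]}))]$ with $\Delta_{ij}(\pi)=a_{i\pi(j)}+a_{j\pi(i)}-a_{i\pi(i)}-a_{j\pi(j)}$. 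The main obstacle is the combinatorial identity that equates these two expressions: under the change of variables $\pi\mapsto\pi^\dagger$ with $(p,q)=(\pi^\dagger(i),\pi^\dagger(j))$ or its reverse (depending on which case of the construction triggered), each of the three cases contributes exactly the multiplicity $n_{[k]}(n_{[k]}-1)$ needed to collapse the four-index zero-bias sum into the two-index exchangeable-pair sum, closing the proof of Lemma \ref{thm:zerobias}. The bookkeeping in the three cases (in particular keeping track of the sign of $\Delta_k$ when $\pi^{-1}(p)\in\{i,j\}$ or $\pi^{-1}(q)\in\{i,j\}$) is where the technical care concentrates, but each case is checked by direct substitution.
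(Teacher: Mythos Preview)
Your approach is correct and takes a genuinely different route from the paper's proof. The paper appeals to the general machinery of \citet[Proposition~4.6 and Lemma~4.4]{chen2011normal}: once $(W_{A,\pi},W_{A,\pi''})$ is a $\lambda$-Stein pair, it suffices to check the single distributional identity \eqref{eq:zero-bias proof}, and this is done by computing both sides explicitly (using Proposition~\ref{prop:permutation property} for the left side and Corollary~\ref{coro:variance of diff} for the right). Your argument instead exploits the product structure of $\Pi_K$: the mixture identity you state---that for $Y=\sum_k Y_k$ with independent centered summands, replacing the $K^*$-th summand by its zero-bias (with $K^*$ drawn proportionally to variances) yields a zero-bias of $Y$---is correct and cleanly explains why $\pr(B^\dagger=k)=R_{A[k]}^2$ is the right weight. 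It then reduces Lemma~\ref{thm:zerobias} to the single-stratum statement, which is exactly the non-stratified combinatorial zero-bias construction already established in \citet{chen2011normal}. So your route is more modular and conceptually transparent, while the paper's is more economical because it verifies one identity rather than layering a mixture lemma on top of the known $K=1$ case. Two small remarks: first, you could simply cite the non-stratified result for Step~3 instead of sketching a re-derivation (the ``bookkeeping in the three cases'' you flag is precisely the content of Proposition~\ref{prop:permutation property} and Chen's Lemma~4.5, and there is no need to redo it); second, your mixture identity only needs $W_{A,\pi[B^\dagger]}^*$ to be independent of $\{W_{A,\pi[k']}\}_{k'\ne B^\dagger}$, not of $W_{A,\pi[B^\dagger]}$ itself, which you have because all auxiliary randomness in Step~\ref{step:one} lives inside $\mathcal I_{[B^\dagger]}$.
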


The permutation $\pi^\ddagger$ reassigns the sampled indices $\{I^\dagger,J^\dagger\}$ based on $\pi^\dagger$.
Proposition~\ref{prop:permutation property} in the Supplementary Materials shows that $\pi^\ddagger$ maps $\{I^\dagger,J^\dagger\}$ to $\{P^\dagger,Q^\dagger\}$.
On the remaining elements, it acts as a random permutation from 
$\mathcal I_{[B^\dagger]}\setminus\{I^\dagger,J^\dagger\}$ to 
$\mathcal I_{[B^\dagger]}\setminus\{P^\dagger,Q^\dagger\}$.
As a result, the pair $(W_{A,\pi^\dagger}, W_{A,\pi^\ddagger})$ satisfies condition~\eqref{eq:zero-bias proof} in the Supplementary Material, which implies Lemma~\ref{thm:zerobias}.
Further details are given in Section~\ref{sec:construct zero-bias} of the Supplementary Material.

\citet{dhault2024robust} swapped two randomly selected indices $I$, $J$ within a random
stratum $B$ to obtain a coupling of $\pi$, denoted by $\pi''=\pi\rho_{I,J}$.
Using \citet[Corollary 4.1]{chen2011normal}, which bounds the Wasserstein distance between $W_{A^\textup{s},\pi}$ and a normal random variable by moments of $W_{A^\textup{s},\pi}-W_{A^\textup{s},\pi''}$, they derived a CLT for product form permutation statistics. \citet{tuvaandorj2024combinatorial} introduced $W_{A^\textup{s},\pi^\dagger}$ but not $W^*_{A^\textup{s},\pi}$, and used Stein's method to derive a CLT for matrix form permutation statistics. We will show in Section \ref{subsec:proof tech and byproducts} below that their proof strategy can be extended to derive an $L^1$ BEB using zero-bias transformation. Our construction of the zero-bias transformation $W^*_{A^\textup{s},\pi}$ leads to stronger results.  See Section \ref{subsec: product form} below for further comments.

\subsection{Simplification}\label{subsec:simplication}
We first show that we can simplify the discussion of $W_{A^\textup{s},\pi}$ to the permutation statistic with a matrix having zero column and row means within 
strata and elements bounded by $1$.

For any matrix $A$, define $\beta_{A[k]}=\sumijk |a_{ij}^0|^3/\sigma_A^3$.
Let $M_n^\textup{s}$ be the set of matrices with zero column and row means whose corresponding stratified linear permutation statistics have variance $1$:
\begin{align}\label{eq:Mns}
    M_n^\textup{s}=\{A: \bar a_{[k] i\cdot} = \bar a_{[k] \cdot j} =0 \text{ for all } i,j\in\Ik , k=1,\ldots,K; \sigma_A^2=1\}.
\end{align}
Further define a subset of matrices with each element bounded by $1$:  
$$M_n^1=\{A\in M_n^\textup{s}:|a_{ij}| \le 1 \text{ for all } i,j\}.$$

 Consider a matrix $A=[a_{ij}]\in M_n^\textup{s}$. For any $\varepsilon>0$, when $\sumk \beta_{A[k]}/\nk\ge \varepsilon$, the following inequality holds since the right-hand side is larger than or equal to 2: 
 $$\sup_{t\in\mathbb R}|\pr(W_{A,\pi}\le t)-\Phi(t)|\le 2 \varepsilon^{-1} \sumk \beta_{A[k]}/\nk.$$ 
  When $\sumk \beta_{A[k]}/\nk\le \varepsilon$, by Lemma \ref{lemma:scaling} in the Supplementary Material, we can prove that there exists a constant $c$, such that
$$\sup_{t\in\mathbb R}|\pr(W_{A,\pi}\le t)-\Phi(t)|\le \sup_{t\in \mathbb R}|\pr(W_{({A'})^\textup{s},\pi}\le t)-\Phi(t)|+c\sumk \beta_{A[k]  }/\nk,$$
where $A'=[a'_{ij}]$ with $a'_{ij}=a_{ij}^\textup{s}\bold 1(|a_{ij}^\textup{s}|\le 1/2)$.
 We can prove that $(A')^\textup{s}\in M_n^1$ in Lemma \ref{lemma:truncation} in the Supplementary Material.  Therefore, we only need to study the permutation statistic on $(A')^\textup{s}\in M_n^1$, the result of which is summarized in Theorem \ref{theorem:beb mn1}.

\begin{theorem}\label{theorem:beb mn1}
For $A\in M_n^1$ and $\pi \sim \textup{Uniform}(\Pi_K)$, there exists a constant $C$, such that 
$$\sup_{t\in \mathbb R}\Big| \pr(W_{A,\pi}\le t)-\Phi (t)\Big| \le C  \sumk\beta_{A[k]}/\nk .$$
 \end{theorem}

The proof of Theorem \ref{theorem:beb} follows directly from Theorem \ref{theorem:beb mn1}. Because of this, we only consider $A\in M_n^1$ for the remainder of Section \ref{sec:proof sketch}.

\subsection{Algebraic Calculation}\label{subsec:algebra cal}
Denote $h_{t,\alpha}(x)$ as a smoothed indicator function of $(-\infty,t]$ which decays linearly from $1$ to $0$ over the interval $[t,t+\alpha]$ with $h_{t,0}(x)=\bold 1(x\le t)$, $h_{t,0}(x)\le h_{t,\alpha}(x)\le h_{t+\alpha,0}(x)$ and $\E [h_{t,0}(X)]=\pr (X\le t)$. 
For $N\sim \mathcal N(0,1)$, by the triangle inequality, we have
$$|\pr(W_{A,\pi}\le t)-\Phi(t)|\le |\E [h_{t,\alpha}(W_{A,\pi})]-\E [h_{t,\alpha}(N)]|+|\E[|h_{t,\alpha}(N)-h_{t,0}(N)|]|.$$
For the second term, because $h_{t,\alpha}$ differs from $h_{t,0}$ only on $[t,t+\alpha]$ and both are in $[0,1]$, we have $|h_{t,\alpha}(N)-h_{t,0}(N)|\le \boldsymbol 1\{N\in [t,t+\alpha]\}$. Therefore, $\E[|h_{t,\alpha}(N)-h_{t,0}(N)|]\le\pr(N\in[t,t+\alpha]) \le \alpha/\sqrt{2\pi}$.
Therefore, we focus on bounding the first term.
Using $f_{t,\alpha}$ constructed by $h_{t,\alpha}$ in \eqref{eq:f--h} and $W_{A,\pi}^*$ as a zero-bias transformation of $W_{A,\pi}$, we have 
\begin{align*}
    \E [h_{t,\alpha}(W_{A,\pi})]-\E[ h_{t,\alpha}(N)]     \overset{\eqref{eq:zb form}}{=}&~\E[f'_{t,\alpha}(W_{  A,\pi})]-\E [f'_{t,\alpha}(W_{  A,\pi}^*)].
\end{align*}

Using the inequality in \citet[Lemma 2.5]{chen2011normal} for $f_{t,\alpha}$:
$$|f'_{t,\alpha}(w+v)-f'_{t,\alpha}(w)|\le |v|\left(1+|w|+\frac{1}{\alpha} \int_0^1 \bold{1}(w+rv\in [t,t+\alpha] )\textup{d}r\right),$$
we can obtain 
\begin{align}\label{eq:total decomposition}
    \sup_{t\in \mathbb R}|\pr(W_{A,\pi}\le t)-\Phi(t)|\le  \mathcal A_1+\mathcal A_2+\mathcal A_3+\frac{\alpha}{\sqrt{2\pi}},
\end{align}
where
\begin{eqnarray}
\mathcal A_1&=&\E [|W_{A,\pi}^*-W_{A,\pi}|],\notag\\
\mathcal A_2&=&\E [|W_{A,\pi}(W_{A,\pi}^*-W_{A,\pi})|],\notag\\
\mathcal A_3&=&\frac{1}{\alpha} \E \left[|W_{A,\pi}^*-W_{A,\pi}|\int_0^1 \bold{1}\left\{W_{A,\pi}+r(W_{A,\pi}^*-W_{A,\pi})\in [t,t+\alpha]\right\}\textup{d}r \right].\label{eq:A3 keyquan}
\end{eqnarray}
The key is to bound $\mathcal A_1$, $\mathcal A_2$, and $\mathcal A_3$.

For $\mathcal A_1$, it is the $L_1$ norm difference between $W_{A,\pi}$ and its zero-bias transformation. We can prove Proposition \ref{prop:L1} in the Supplementary Material that
\begin{align}\label{eq:bound for A1 in main}
    \mathcal A_1\le 80 \sumk \beta_{A[k]}/\nk.
\end{align}

 For $\mathcal A_2$, define 
 \begin{align}\label{eq:define R S T}
     \mathcal R=\{I^\dagger, J^\dagger, \pi^{-1}(P^\dagger),\pi^{-1}(Q^\dagger) \},\quad S=\sum_{i\not\in \mathcal R}a_{i\pi(i)},\quad T=\sum_{i\in \mathcal R}a_{i\pi(i)}.
 \end{align}
Since $A\in M_n^1$, we have  
$|W_{A,\pi}|=|S+T|\le |S|+\sum_{i\in \mathcal R}|a_{i\pi(i)}|\le |S|+4.$ Define $$\textbf{I}=(B^\dagger,I^\dagger,J^\dagger,\pi^{-1}(P^\dagger),\pi^{-1}(Q^\dagger),\pi(I^\dagger),\pi(J^\dagger),P^\dagger,Q^\dagger).$$ We prove in Section \ref{subsubsec:bound A_1-3} in the Supplementary Material that there exists a universal constant $C$ such that
$$\mathcal A_2\le \E [|W_{A,\pi}^*-W_{A,\pi}|\cdot \E[|S|+4\mid \textbf{I}]]\le C\E [|W_{A,\pi}^*-W_{A,\pi}|]=C\mathcal A_1.$$

For $\mathcal A_3$, we first derive the bound
\begin{align}\label{eq:A3 method2}
\mathcal A_3\le \frac{1}{\alpha}\E!\left[|W_{A,\pi}^*-W_{A,\pi}|\sup_{t\in\mathbb R}\pr!\left(S\in[t,t+\alpha]\mid \mathbf I\right)\right],
\end{align}
where the derivation is given in Section~\ref{subsubsec:bound for A_3 new} of the Supplementary Material.
Moreover, by the concentration inequality \eqref{eq:unified concentration} proved in the Supplementary Material, for some constants $C'$ and $C''$,
$$\pr(S\in [a,b]\mid \textbf{I})\le C'(b-a)+C''\sumk\beta_{A[k]}/\nk.$$

Choosing $\alpha=\sumk\beta_{A[k]}/\nk$ in \eqref{eq:A3 method2}, we have the BEB in Theorem \ref{theorem:beb}.

\subsection{Comments on Proof Techniques and Byproducts}\label{subsec:proof tech and byproducts}
We compare our proof techniques with the existing ones.

First, the proofs of \citet{bolthausen1984estimate}, \citet{chen2011normal}, \citet{chen2015error} and ours all relate to $S$ defined in \eqref{eq:define R S T}. Given $\bold I$, we can view $S$ as a permutation statistic created by deleting the rows $\{I^\dagger,J^\dagger,\pi^{-1}(P^\dagger),\pi^{-1}(Q^\dagger)\}$ and the columns $\{\pi(I^\dagger),\pi(J^\dagger),P^\dagger,Q^\dagger\}$ from the original matrix $A$. In non-stratified settings, \cite{bolthausen1984estimate} and \cite{chen2011normal} bounded the right-hand side of \eqref{eq:A3 keyquan} by an inductive approach. However, it is challenging to apply that approach when we have a multidimensional $(n_{[1]},\ldots,n_{[K]})$ as in the stratified settings. 
In contrast, \cite{chen2015error} used the concentration inequality approach in Stein's method to bound $\pr(S\in [a,b]\mid \textbf{I})$ in the non-stratified settings. We adopt a similar strategy for proving Theorem \ref{theorem:beb} and provide a parallel concentration lemma to their core result for the stratified settings in Section \ref{subsubsec:concentration lemma} of the Supplementary Material.

However, the bound obtained by directly applying the techniques of \citet{chen2015error}, which address only non-stratified settings, is informative only when both $R_{A[k]}^2/\nk$ is small and $\nk\ge6$ for all $k$. These conditions are generally violated when one or several small strata contribute non-negligibly to the overall variance, under which the argument of \citet{chen2015error} no longer applies. Therefore, obtaining a tighter bound requires a more refined use of information across all strata rather than relying solely on per-stratum conditions.

Based on the simplification in Section~\ref{subsec:simplication}, since Theorem \ref{theorem:beb} holds trivially when $\sumk \beta_{A[k]}/\nk\ge \varepsilon,$ we only need to consider the case $\sumk \beta_{A[k]}/\nk\le \varepsilon$, which, by H\"older’s inequality, implies $R_{A[k]}^2/\nk = O(\varepsilon^{2/3})$. This ensures that no single small stratum dominates the total variance, thereby avoiding the limitation inherent in the per-stratum bound.
When $\nk$ is small, however, the difficulty in directly applying the result of \citet{chen2015error} becomes more fundamental. Removing $|\mathcal R|$ rows and columns from stratum $B^\dagger$ can effectively eliminate an entire stratum, so the number of strata contributing to the randomness of $S$ differs from that of $W_{A,\pi}$. To handle this case, we establish a separate concentration inequality tailored to such small-stratum configurations. In this setting, since the $W_{A[k]}$’s are independent, the resulting concentration behavior is analogous to the Berry–Esseen bound for sums of independent random variables \citep[Lemma~3.1]{chen2011normal}.

Second, \cite{bolthausen1984estimate} and \cite{chen2011normal} utilized the truncated matrix $A'$ in their proofs, while \cite{chen2015error} did not. We employ the idea of truncation. Utilizing truncation, we only need to consider matrices in $M_n^1$. It implies $|T|\le 4$ in $\mathcal A_2$ and simplifies the proof of Theorem \ref{theorem:beb}. 

Third, both \cite{chen2011normal} and our proof use the zero-bias transformation, while
\cite{bolthausen1984estimate} and \cite{chen2015error} did not.
Since the normal approximation of $W_{A,\pi}$ in Wasserstein distance can be bounded by the $L_1$ norm difference between $W_{A,\pi}$ and its zero-bias transformation
\citep[Theorem 4.1]{chen2011normal}:
\begin{align}\label{eq:relation between d_W and L1}
    d_{\textup W}(W_{A,\pi},\mathcal N(0,1))\le 2\E [|W_{A,\pi}^*-W_{A,\pi}|].
\end{align}
We can further bound the right-hand side of \eqref{eq:relation between d_W and L1} by the third moment (see \eqref{eq:bound for A1 in main}) and obtain the $L^1$ BEB for stratified linear permutation statistics as a byproduct in Theorem \ref{thm:Wasserstein BEB} below.

\begin{theorem}\label{thm:Wasserstein BEB}
If $\pi \sim \textup{Uniform}(\Pi_K)$,
there exists a constant $C>0$ such that
\begin{align}
    d_{\textup W}\left(W_{A^\textup{s},\pi},\mathcal N(0,1)\right)\le C \sumk \sumijk|a_{ij}^\textup{s}|^3/\nk.
\end{align}
\end{theorem}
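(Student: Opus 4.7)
The plan is to derive Theorem \ref{thm:Wasserstein BEB} essentially as an immediate byproduct of two ingredients already set up for the proof of Theorem \ref{theorem:beb}: the general Wasserstein-to-zero-bias inequality \eqref{eq:relation between d_W and L1}, and the $L_1$ bound on $W_{A,\pi}^* - W_{A,\pi}$ recorded in \eqref{eq:bound for A1 in main} (Proposition \ref{prop:L1} in the supplement). Both ingredients apply directly to the standardized matrix $A^\textsc{s}$, and crucially neither relies on the truncation reduction to $M_n^1$ that was needed only to control $\mathcal{A}_2$ and $\mathcal{A}_3$ for the Kolmogorov-distance bound. So unlike the proof of Theorem \ref{theorem:beb}, we do not pay any cost from replacing $A$ by its truncated version, and the Wasserstein bound comes out cleanly on the original scale.

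Concretely, I would first invoke the zero-bias construction of Section \ref{subsec:pi dagger construct} applied to $A^\textsc{s}$ to obtain $W_{A^\textsc{s},\pi}^*$, which is a zero-bias transformation of $W_{A^\textsc{s},\pi}$ by Lemma \ref{thm:zerobias}. Since $W_{A^\textsc{s},\pi}$ has mean $0$ and variance $1$, inequality \eqref{eq:relation between d_W and L1} gives
$$d_{\textup W}\bigl(W_{A^\textsc{s},\pi},\mathcal N(0,1)\bigr) \le 2\,\E\bigl|W_{A^\textsc{s},\pi}^* - W_{A^\textsc{s},\pi}\bigr|.$$
Then I would apply the $L_1$ bound \eqref{eq:bound for A1 in main} (valid for the standardized matrix $A^\textsc{s}$, which automatically has zero within-stratum row and column means and unit variance):
$$\E\bigl|W_{A^\textsc{s},\pi}^* - W_{A^\textsc{s},\pi}\bigr| \le 80\sum_{k=1}^K \beta_{A^\textsc{s}[k]}/\nk = 80 \sum_{k=1}^K \sum_{i,j\in\mathcal I_{[k]}}|a_{ij}^\textsc{s}|^3/\nk,$$
using the identity $\beta_{A^\textsc{s}[k]} = \sum_{i,j\in\mathcal I_{[k]}}|a_{ij}^\textsc{s}|^3$ since $A^\textsc{s}$ is already centered with $\sigma_{A^\textsc{s}}=1$. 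Multiplying by $2$ yields the stated constant $160$.

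The only delicate point, which is really the main (and minor) obstacle, is to confirm that the $L_1$ bound on $\mathcal{A}_1$ does not implicitly require entrywise boundedness $|a_{ij}|\le 1$ from the $M_n^1$ reduction. Inspecting Steps \ref{step:one}--\ref{step:two}, the construction of $W_{A,\pi}^*$ manipulates the entries of $A^0$ only through the fourth-index swapping probabilities proportional to $(a_{ip}+a_{jq}-a_{iq}-a_{jp})^2$, and the resulting $L_1$ difference is controlled by a direct expansion that produces exactly third-order moments of the entries divided by $\sigma_A^3$; nowhere does boundedness of entries enter. Hence the same argument applies verbatim to $A^\textsc{s}$, completing the proof.
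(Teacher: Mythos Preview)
Your proposal is correct and follows exactly the paper's own argument: combine \eqref{eq:relation between d_W and L1} with Proposition~\ref{prop:L1} applied to $A^\textsc{s}\in M_n^\textsc{s}$, yielding $2\times 80=160$. Your observation that Proposition~\ref{prop:L1} requires only $A\in M_n^\textsc{s}$ (not $M_n^1$) is exactly the point the paper makes when calling this result a ``byproduct'' that bypasses the truncation step.
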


\begin{remark}\label{Kol-Was}
   From \citet[Proposition 1.2]{ross2011fundamentals}, we have
   \begin{align}\label{eq:K-W naive bound}
       d_{\textup K}\left(W_{A^\textup{s},\pi},\mathcal N(0,1)\right)\le  \left(\frac{2}{\pi}\right)^{1/4}\sqrt{ d_{\textup W}\left(W_{A^\textup{s},\pi},\mathcal N(0,1)\right)}.
   \end{align}
By \eqref{eq:K-W naive bound}, Theorem \ref{thm:Wasserstein BEB} implies that there exists a constant $C>0$ such that $$d_{\textup K}\left(W_{A^\textup{s},\pi},\mathcal N(0,1)\right)\le C\left(\sumk \sumijk|a_{ij}^\textup{s}|^3/\nk\right)^{1/2},$$
 which is weaker than that in Theorem \ref{theorem:beb} in terms of the convergence rate.
\end{remark}
\begin{remark}
  \citet{dhault2024robust} used Stein's method to obtain a CLT for $W_{A,\pi}$ when $A$ is in the form of outer product of two vectors by proving $d_{\textup W}(W_{A^\textup{s},\pi},\mathcal N(0,1))\rightarrow 0$.
  Using Theorem \ref{thm:Wasserstein BEB}, we can strengthen their results by relaxing their conditions. We will revisit this point in Corollary \ref{coro:IV} below. 
\end{remark}

\section{Applications to the Motivating Examples}\label{sec:application}
We now apply Corollary \ref{coro:apply BEB} to Examples \ref{example:sampling}--\ref{example:ps} and obtain the corresponding BEBs. Define $$M_{[k]}^r(Y) = n_{[k]}^{-1} \sum_{i\in \mathcal{I}_{[k]}} | Y_i - \bar{Y}_{[k]} |^r$$ as the $r$-th finite-population moment of $Y$ within the $k$-th stratum, which will play key roles in our presentation.

\subsection{Example \ref{example:sampling}: Stratified Sampling}\label{subsec:sampling}
For stratified sampling, \cite{bickel1984asymptotic} established a CLT for the unbiased estimator $\hat{\gamma}$ in Example \ref{example:sampling}. \cite{tuvaandorj2024combinatorial} further established a CLT for general stratified linear permutation statistics, which includes the result of \cite{bickel1984asymptotic} as a special case. Here, we focus on $\hat\gamma = \sumk w_{[k]} \hat Y_{[k]}$ by replacing $\{\pik=\nk/n: k=1,\ldots, K\}$ with a general form of weights $\{w_{[k]}: k=1,\ldots, K\}$. Different choices of $w_{[k]}$ correspond to different estimands of interest. For instance, one would choose $w_{[k]}=\nk/n$ if the population of individuals is of interest, and choose $w_{[k]}=1/K$ if the population of strata is of interest. Let $p_{[k]}=n_{[k]1}/\nk$ denote the sampling rate. \cite{bickel1984asymptotic} derived $\var(\hat\gamma)=\sumk \pik^2 (\nk n_{[k]1})^{-1}n_{[k]0}S_{[k]}^2,$ where $S_{[k]}^2=(\nk-1)^{-1}\sumik (Y_i-\bar Y_{[k]})^2$. Applying Corollary \ref{coro:apply BEB}, we obtain the following result on $\hat{\gamma}$.

\begin{coro}[BEB for stratified sampling]\label{coro:sampling}  There exists a constant $C$ such that 
\begin{align}\label{eq:sampling}
       \sup_{t\in \mathbb R}\left|\pr\left(\frac{\hat\gamma-\gamma}{\sigma_{\textup{ss}}}\le t\right)-\Phi(t)\right|\le  \frac C {\sigma^3_{\textup{ss}}}\sumk \left(\frac{w_{[k]}^3}{ n_{[k]1}^2}\right )\Mkthree (Y) ,
   \end{align}
   where $\sigma_{\textup{ss}}^2=\var(\hat{\gamma})$. In particular, if $\pik=\nk/n$ for all $k$, each $p_{[k]}$ is bounded away from $0$ and $1$, and both
$n^{-1}\sumk \nk S_{[k]}^2$ and $n^{-1}\sumk \nk \Mkthree(Y) $ have nonzero finite limits, the upper bound in \eqref{eq:sampling} is of order $1/n^{1/2}$. 
\end{coro}

\subsection{Example \ref{example:exp}: Stratified Experiment}\label{subsec:exp}
\cite{fogarty2018finely,fogarty2018} proved CLTs for two special stratified experiments: paired experiments and finely stratified experiments.
Following the techniques used in \cite{bickel1984asymptotic} for stratified sampling, \cite{Liu2019} provided a CLT for stratified randomized experiments. \cite{liu2022straclt} further proved a multi-treatment level CLT for stratified factorial experiments. 
Similar to Section \ref{subsec:sampling}, we focus on $\hat\tau =\sumk w_{[k]} \hat\tau_{[k]}$ by replacing $\{\pik=\nk/n:k=1,\ldots,K\}$ with a general form of weights $\{w_{[k]}:k=1,\ldots,K\}$. 
Define stratum-specific propensity score as $p_{[k]}=n_{[k]1}/\nk$, stratum-specific variances as $S_{[k]z}^2=(\nk-1)^{-1}\sumik\{Y_i(z)-\bar Y_{[k]}(z)\}^2$ for $z=0,1$, and $S_{[k]\tau}^2=(\nk-1)^{-1}\sumik[\{Y_i(1)-Y_i(0)\}-\{\bar Y_{[k]}(1)-\bar Y_{[k]}(0)\}]^2$. 
We have $$\var(\hat\tau)=\sumk \pik^2 \left(\frac{S_{[k]1}^2}{n_{[k]1}}+\frac{S_{[k]0}^2}{n_{[k]0}}-\frac{S_{[k]\tau}^2}{\nk}\right).$$
Applying Corollary \ref{coro:apply BEB}, we obtain the following result on $\hat{\tau}$.

\begin{coro}[BEB for stratified experiment]\label{coro:strati exp}
 There exists a constant $C$ such that 
    \begin{align}\label{eq:experiment}
        \sup_{t\in \mathbb R}\left|\pr\left(\frac{\hat\tau-\tau}{\sigma_{\textup{sre}}}\le t\right)-\Phi(t)
       \right|
        \le  \frac C {\sigma_{\textup{sre}}^3}\sumk  w_{[k]} ^3\left\{\frac{\Mkthree (Y(1))}{n_{[k]1}^2}+\frac{\Mkthree (Y(0))}{n_{[k]0}^2}\right\},
    \end{align}
 where $\sigma_{\textup{sre}}^2=\var(\hat{\tau})$. In particular, if $w_{[k]}=\nk/n$ for all $k$, each $p_{[k]}$ is bounded away from $0$ and 1, and 
$n\sigma^2_{\textup{sre}}$ and $n^{-1}\sumk \nk \Mkthree(Y(z))$ for $z=0,1 $ have nonzero finite limits, the upper bound in \eqref{eq:experiment} is of order $1/n^{1/2}$. 
    
\end{coro}

\subsection{Example \ref{example:ps exp}: Post-stratification in Survey Sampling}\label{subsec:post sampling}
\cite{fuller1966estimation} proposed
unbiased estimators for post-stratification.
\cite{holt1979post} compared the estimator $\hat\gamma_{\textup{ps}}$ with the sample mean, and showed that neither is the best universally in terms of mean squared errors. \cite{little1993post} studied post-stratification from a model-based perspective. Conditional CLTs are available for stratified permutation statistics, conditional on the realized stratification. In particular, \cite{dhault2024robust} studied stratification based on an auxiliary random variable, allowing for a random number of strata and random strata sizes, and establish a conditional CLT for stratified permutation statistics.  \cite{tuvaandorj2024combinatorial} proved a conditional CLT under a Lindeberg-type condition, which accommodates an arbitrary, and possibly growing number of strata of varying sizes, and applies even when strata are constructed using auxiliary covariates. Both results hold conditional on the stratification. However, rigorous unconditional normal approximation results for the post-stratification estimator $\hat\gamma_{\textup{ps}}$ have not been established.

The post-stratification estimator $\hat\gamma_{\textup{ps}}$ has the same randomness as the stratified sampling estimator conditional on $\bold U_1$, where $\bold U_1\in \mathcal D_1$. We can apply Corollary \ref{coro:sampling} and obtain a BEB for
$$\sup_{t\in \mathbb R}\left|\pr\left\{\frac{\hat\gamma_{\textup{ps}}-\E [\hat\gamma_{\textup{ps}}\mid \bold U_1]}{\sigma_{\textup{ps}}(\bold U_1)}\le t\mid \bold U_1 \right\}-\Phi(t)\right|,$$
where $\sigma_{\textup{ps}}^2(\bold U_1)=\var(\hat\gamma_{\textup{ps}}\mid\bold U_1)$ is the conditional variance of $\hat\gamma_{\textup{ps}}$ given $\bold {U}_1$.
When  $w_{[k]}=\nk/n$, the post-stratification estimator is conditionally unbiased: 
$$\E [\hat \gamma_{\textup {ps}}\mid \bold U_1]=\sumk \pik \nk^{-1} \sum_{X_i=k} Y_i=n^{-1}\sum_{i=1}^n Y_i=\gamma.$$ 
Define $\sigma_{\textup{ps}}^2=\var(\hat\gamma_{\textup{ps}}\mid \mathcal D_1)$ for this subsection. We have that 
 $$\sigma_{\textup{ps}}^2=\frac 1 n \sumk \frac{\nk}{n} \E\left[\frac{n_{[k]0}}{n_{[k]1}}\mid \mathcal D_1\right] S_{[k]}^2,$$
When $n_1/n\rightarrow p\in(0,1)$, we have $\E[n_{[k]1}^{-1}{n_{[k]0}}\mid \mathcal D_1]\rightarrow (1-p)/p$ as $\nk \rightarrow \infty$ by \citet[Lemma 1]{miratrix2013}.
By the triangle inequality, we have that 
\begin{align*}
    \left|\pr\left(\frac{\hat\gamma_{\textup{ps}}-\gamma}{\sigma_{\textup{ps}}}\le t \mid \bold U_1\right)-\Phi(t)\right|\le& \left|\pr\left(\frac{\hat\gamma_{\textup{ps}}-\gamma}{\sigma_{\textup{ps}}(\bold U_1)}\le \frac{\sigma_{\textup{ps}} t}{\sigma_{\textup{ps}}(\bold U_1)} \right)-\Phi\left(\frac{\sigma_{\textup{ps}} t}{\sigma_{\textup{ps}}(\bold U_1)} \right)\right|\\
    +&\left|\Phi\left(\frac{\sigma_{\textup{ps}} t}{\sigma_{\textup{ps}}(\bold U_1)} \right)-\Phi(t)\right|.
\end{align*}

Applying Corollary~\ref{coro:sampling} to the first term and using the law of total probability yields the following BEB conditional on $\mathcal D_1$; moreover, by extending $\hat\gamma_{\textup{ps}}$ to be reasonably defined on $\mathcal D_1^{c}$ (e.g., setting them to $0$), the same inequality can be stated unconditionally, but for brevity and to follow \citet{miratrix2013} we present the bound conditional on $\mathcal D_1$.

\begin{coro}[BEB for post-stratification in survey sampling]\label{coro:post-sampling}
There exists a constant $C$ such that
\begin{align}\label{ineq:post-sampling}
        \sup_{t\in \mathbb R}\left|\pr\left(\frac{\hat\gamma_{\textup{ps}}-\gamma}{\sigma_{\textup{ps}}}\le t\mid \mathcal D_1\right)-\Phi(t)\right|
        \le & C
       \sumk{\left(\frac{\nk}{n}\right)^3 \E[n_{[k]1}^{-2}\sigma_{\textup{ps}}^{-3}(\bold U_1)\mid\mathcal D_1]}\Mkthree(Y)\notag\\
       & ~~~ +\sup_{t\in \mathbb R}\left|\E\left[\Phi\left(\frac{\sigma_{\textup{ps}} t}{\sigma_{\textup{ps}}(\bold U_1)}\right)\mid\mathcal D_1\right]-\Phi(t)\right|.
    \end{align}
In particular, if the number of strata $K$ is fixed, $w_{[k]}=n_{[k]}/n$ is bounded away from $0$ for each $k$, $\max_{1\le k \le K}S_{[k]}^2$ is bounded,  
both $n^{-1}\sumk \nk S_{[k]}^2$ and $n^{-1}\sumk \nk \Mkthree(Y) $ have nonzero finite limits, and $n_1/n\rightarrow p$ with
\begin{align}\label{eq:p sampling}
    p\in \left(0,\liminf_{n\rightarrow \infty} (\min_{1\le k \le K} S_{[k]}/\max_{1\le k \le K} S_{[k]})\right),
\end{align}
then the first term in \eqref{ineq:post-sampling} is $O(1/n^{1/2})$ and the second term in \eqref{ineq:post-sampling} is $O(1/n)$. 
\end{coro}

The first term on the right-hand side of the bound \eqref{ineq:post-sampling} is the same as the right-hand side of the bound \eqref{eq:sampling} in Corollary \ref{coro:sampling} with $n_{[k]1}^{-2}\sigma_{\textup{ss}}^{-3}$ replaced by $\E[n_{[k]1}^{-2}\sigma_{\textup{ps}}^{-3}(\bold U_1 )\mid \mathcal D_1]$. The second term is the Kolmogorov distance between a mixture of normal distributions and a standard normal distribution.
To obtain the orders of the first term and the second term, the condition \eqref{eq:p sampling} imposes constraints on the heterogeneity in $Y$ across strata.
Under \eqref{eq:p sampling}, there exists a constant $\underline c$ such that  $\sigma_{\textup{ps}}^2(\bold U_1)\ge  \underline c \sigma_{\textup{ps}}^2$. In addition, another key component to study is
the variance of $\nk/n_{[k]1}$ under $\mathcal D_1$. 
\citet[Lemma 1]{miratrix2013} studied the expectation of this term. To analyze the asymptotic order of $\var(\nk/n_{[k]1}\mid \mathcal D_1)$, we derive a CLT for ${\nk}/\{{n_{[k]1}+\bold 1(n_{[k]1}=0)}\}$ (See Lemma \ref{lemma:hyper}) by the normal approximation result of the hyper-geometric distribution \citep{lahiri2007normal} and the delta method. After verifying the uniformly integrable conditions, we can obtain $\var({\nk}/{n_{[k]1}} \mid \mathcal D_1)=O(1 /n)$.

\subsection{Example \ref{example:ps}: Post-stratification in Experiments}\label{subsec:post exp}
\citet{mchugh1983post} studied post-stratification in randomized experiments. \cite{miratrix2013} studied the efficiency of the post-stratification estimator $\hat\tau_{\textup{ps}}$ conditional on $\mathcal D$, and found that this estimator is nearly as efficient as a stratified experiment with  $\{n_{[1]1},\ldots,n_{[K]0}\}$ and the difference in their variances is of the order $1/n^{2}$ when $n_{[k]1} /n_{[k]}\rightarrow p$ for some constant $p$. They did not derive a CLT for post-stratification. Define
$S_z^2=n^{-1} \sumk \nk S_{[k]z}^2$, $z=0,1,\tau$ as the weighted variance and $T_{z}^3=n^{-1}\sumk \nk \Mkthree(Y(z))$, $z=0,1$ as the weighted third moment. Define $\sigma_{\textup{ps}}^2=\var(\hat\tau_{\textup{ps}}\mid \mathcal D)$ for this subsection. We have
$$\sigma_{\textup{ps}}^2= \frac{1}{n}\sumk \frac{\nk}{n}\left(\E\left[\frac{\nk}{n_{[k]1}}\mid \mathcal D\right] S_{[k]1}^2+\E
\left[\frac{\nk}{n_{[k]0}}\mid \mathcal D\right] S_{[k]0}^2- S_\tau^2\right).$$

Conditional on $\bold U$, the only remaining randomness in $\hat\tau_{\textup{ps}}$ comes from the within-stratum treatment assignments. Hence, conditional on $\bold U$, $\hat\tau_{\textup{ps}}$ has the same distribution as the stratified difference-in-means estimator in a stratified randomized experiment. Therefore, the normal-approximation argument in Section~\ref{subsec:post sampling} carries over verbatim once the sampling-stage randomness there is replaced by the assignment-stage randomness here. For brevity, we present the bound conditional on $\mathcal D$ similar to Section~\ref{subsec:post sampling}. Corollary \ref{coro:post} below gives a BEB: 

\begin{coro}[BEB for post-stratification in randomized experiment]\label{coro:post}
There exists a constant $C$ such that
\begin{align}\label{ineq:post}
       & \sup_{t\in \mathbb R}\left|\pr\left(\frac{\hat\tau_{\textup{ps}}-\tau}{\sigma_{\textup{ps}}}\le t \mid \mathcal D\right)-\Phi(t)\right|\notag\\
        \le &
        C \sumk\left(\frac{\nk}{n}\right)^3 \left\{\E[n_{[k]1}^{-2}\sigma^{-3}_{\textup{ps}}(\bold U)\mid\mathcal D] \Mkthree(Y(1))+\E [n_{[k]0}^{-2}\sigma^{-3}_{\textup{ps}}(\bold U)\mid\mathcal D]\Mkthree(Y(0))\right\}\notag\\
        ~~~&+\sup_{t\in \mathbb R}\left|\E\left[\Phi\left(\frac{\sigma_{\textup{ps}} t}{\sigma_{\textup{ps}}(\bold U)}\right)\mid\mathcal D\right]-\Phi(t)\right|,
    \end{align}
where $\sigma^2_{\textup{ps}} = \var(\hat\tau_{\textup{ps}}\mid \mathcal D)$ is the conditional variance of $\hat\tau_{\textup{ps}}$ given $\mathcal D$. In particular, if the number of strata $K$ is fixed, $w_{[k]}=n_{[k]}/n$ is bounded away from $0$ for each $k$, $\max_{1\le k \le K}S_{[k]}^2$ is bounded, $S_1^2,S_0^2$ and $T_1^3,T_0^3$ have nonzero finite limits, and $n_1/n\rightarrow p\in(0,1)$ with
\begin{align}\label{eq:p tau}
    \frac{\min_{1\le k \le K} S_{[k]1}}{\max_{1\le k \le K} S_{[k]1}}\frac{S_1^2}{p}+\frac{\min_{1\le k \le K} S_{[k]0}}{\max_{1\le k \le K} S_{[k]0}}\frac{S_0^2}{1-p}>S_\tau^2, 
\end{align}
then the first term in \eqref{ineq:post} is $O(1/n^{1/2})$ and the second term in \eqref{ineq:post} is $O(1/n)$.
\end{coro}

Mimicking the proof of Corollary \ref{coro:post-sampling}, we provide similar regularity conditions on the second and third moments of $Y(1)$ and $Y(0)$. 
Taking into account both the variation of the treatment and control groups, we
need a condition on $p$ as in \eqref{eq:p tau}. 
This condition holds in many scenarios. For example, a sufficient condition for \eqref{eq:p tau} is that $p<\min_{1\le k\le K} S_{[k]1}/\max_{1\le k\le K} S_{[k]1}$ and $1-p<\min_{1\le k\le K} S_{[k]0}/\max_{1\le k\le K} S_{[k]0}$. This is a direct extension of the condition in Corollary \ref{coro:post-sampling}. For another example, consider the case with a constant treatment effect where $Y_i(1)=Y_i(0)+\tau$ for $i=1,\ldots,n$. In this case, $S^2_{\tau} = 0$ and \eqref{eq:p tau} holds unless the minimums of $S_{[k]1}^2$ and $S_{[k]0}^2$ over $k$ both degenerate to $0$.

\section{Application to Stratified Permutation Tests}\label{sec:permute_test}

In Examples \ref{example:exp} and \ref{example:ps} of Section \ref{sec:application}, we focus on the repeated sampling properties of the point estimators for causal effects. The other frequently adopted mode of design-based causal inference is to use the Fisher randomization test to calculate the $p$-value under Fisher's sharp null hypothesis \citetext{\citealp{rosenbaum2002observational}, \citealp{imbens2015causal}, \citealp{ding2024first}}.
We now use our BEBs to study the stratified permutation test statistics under Fisher's sharp null hypothesis.

\subsection{Stratified Permutation Test}
Consider the setting of Example \ref{example:exp}, Fisher's sharp null hypothesis states that $H_0$: $Y_i(1)-Y_i(0)=\tau_0, i=1,\ldots,n$ for a fixed and known constant $\tau_0$. Under $H_0$,   $Y_i(0) =  Y_i- Z_i \tau_0$ is known for every unit. 
Let $\bold R$ denote the score vector generated by the potential outcomes under control $\bold Y(0)$, for example, $\bold R = \bold Y(0)$ or $\bold R$ equals the rank vector of $\bold Y(0)$. Let $\bold Z$ denote the vector of treatment conditions with $n_{[k]1}$ treated and $n_{[k]0}$ control units in the $k$-th stratum, $k=1,\ldots,K$. We can use the following statistic
\citep{rosenbaum2002observational}:
\begin{align}\label{eq:ZR}
    \bold Z^\T \bold R=\sumk\sumik Z_i R_i
\end{align}
to test $H_0$. 
We permute $\bold Z$ with $\pi\sim \textup{Uniform}(\Pi_K)$ and obtain $\bold Z_{\pi}=[Z_{\pi(1)},\ldots,Z_{\pi(n)}]^\T$. The randomness of $\pi$ generates the distribution of $\bold Z_{\pi}^\T\bold R$, which is the basis for calculating the $p$-value.

\subsection{Stratified Permutation Test with an Instrumental Variable}
We can also extend the stratified permutation test to deal with noncompliance in randomized experiments. Let $D_i$ denote the received dose of the treatment. We are interested in testing the effect of $D$ on the outcome $Y$. Since both the received dose and the outcome are results of the treatment assignment, we define $D_i(0)$ and $D_i(1)$ as the potential received dose of the treatment under control assignment and treatment assignment, respectively, and $Y_i(0)$ and $Y_i(1)$ as the potential outcome under control assignment and treatment assignment, respectively. Viewing $Z$ as an instrumental variable, we consider the linear instrumental variable model $Y_i(1)-Y_i(0) = \beta (D_i(1)-D_i(0))$ \citep{rosenbaum2002observational,imbens2005robust}. 
Let $\bold Y$ and $\bold D$ denote the vector of the observed outcomes and the vector of received doses, respectively, for all subjects. 
Under $H_0:\beta=\beta_0$, we have $Y_i - \beta_0 D_i = Y_i(1)-\beta_0 D_i(1) = Y_i(0)-\beta_0 D_i(0)$. 
Let $\bold R$ be the score vector generated by the adjusted outcomes $\bold Y-\beta_0 \bold D$, for example, $\bold R = \bold Y-\beta_0 \bold D$ or $\bold R$ equals the rank vector of $\bold Y-\beta_0 \bold D$. Then we can use a statistic in the same form as \eqref{eq:ZR} to test $H_0$.

\subsection{BEB for the Stratified Permutation Test Statistic}\label{subsec: product form}
Define the stratum-specific means $\bar Z_{[k]}=\nk^{-1}\sumik Z_i$ and $\bar R_{[k]}=\nk^{-1} \sumik R_i$.
The expectation of $\bold Z_{\pi}^\T \bold R$ equals 
$$\mu = \E [\bold Z_{\pi}^\T \bold R]=\sumk\nk \bar Z_{[k]}\bar R_{[k]}.$$
The variance of $\bold Z_{\pi}^\T \bold R$ equals $$\sigma^2=\var(\bold Z_{\pi}\bold R)=\sumk \frac {\nk^2} {\nk-1}\Mktwo(Z)\Mktwo(R).$$ 
With the definitions of $\mu$ and $\sigma^2$ above, we apply Theorem \ref{theorem:beb} to this product form permutation statistic, and obtain the following BEB:
\begin{coro}[BEB for the stratified permutation test]\label{coro:IV}
Assume $\pi\sim \textup{Uniform}(\Pi_K)$. There exists a constant $C$ such that 
    \begin{align*}
    \sup_{t\in \mathbb R}\left|\pr\left(\frac{\bold Z_{\pi}^\T \bold R-\mu}{\sigma}\le t\right)-\Phi(t)\right|
    \le  \frac C {\sigma^3}\sumk  \nk\Mkthree (Z) \Mkthree (R).
    \end{align*}  
\end{coro}

To obtain the CLT in \citet{dhault2024robust} for the product form permutation statistics, we only need $\sigma^2$ to tend to a constant and the third-moment term $\sumk \nk \Mkthree(Z)\Mkthree(R)\rightarrow 0$. With Corollary \ref{coro:IV}, the condition specified in \citet[Lemma 4]{dhault2024robust} is redundant: we do not need to assume that the fourth-moment term in the condition (d) of \citet[Lemma 4]{dhault2024robust} tends to zero. \cite{tuvaandorj2024combinatorial} also relaxed the condition by providing a Lindeberg-type CLT.
The $W_{A, \pi^{\dagger}}$ was introduced in \citet{tuvaandorj2024combinatorial} for establishing CLTs; we adapt this construction in Step \ref{step:one} to obtain BEB. They verified that $\E [W_{A,\pi}]-\E[W_{A,\pi}f(W_{A,\pi})]=\E [W_{A,\pi}]-\E[W_{A,\pi^\dagger}f(W_{A,\pi^\dagger})]\rightarrow 0$ to obtain a CLT under some regularity conditions. \cite{tuvaandorj2024combinatorial} did not introduce the zero-bias transformation in Lemma \ref{thm:zerobias}. We use Stein's method and construct the zero-bias transformation to obtain the BEB in Corollary \ref{coro:IV} that can attain the classic order $1/n^{1/2}$ under regularity conditions.

\section{Extensions}\label{sec:additional resutl}
We have two extensions. 
First, we extend Theorem \ref{theorem:beb} to the multivariate stratified linear permutation statistic. The extension is useful when we are interested in estimating multiple parameters simultaneously. Second, we prove a BEB of the multivariate stratified linear permutation statistic over convex sets as, extending the result of \citet{fang2015rates} in the non-stratified setting. The extension is useful when we are interested in analyzing quadratic form statistics. We give the details below. 
\subsection{BEBs for Linear Combination of Multivariate Permutation Statistics}

In this subsection, we extend Theorem \ref{theorem:beb} to linear combinations of the multivariate stratified linear permutation statistic.

For $H$ matrices $\{G_h\}_{h=1}^H$ and stratified permutation $\pi\sim \textup{Uniform}(\Pi_K)$, define
$$\Gamma=(W_{G_1,\pi},W_{G_2,\pi},\ldots,W_{G_H,\pi})^\T.$$

We can write $\Gamma$ in the form of the trace inner product. Define a permutation matrix 
$$P=[\boldsymbol 1\{i=\pi(j)\}]_{1\le i,j\le n}.$$
 For any $h=1,\ldots,H$, we have $$\tr (G_h P)=\sum_{i=1}^n\sum_{j=1}^n g_{h,ij}\bold 
 1\{j=\pi(i)\}=\sum_{i=1}^n g_{h,i\pi(i)}=W_{G_h,\pi},$$ 
so
$$\Gamma=(\tr(G_1 P),\tr (G_2P),\ldots,\tr(G_H P))^\T.$$
Center $\Gamma$ as follows:
$$\Gamma^0=(\tr(G_1^0 P),\tr(G_2^0 P),\ldots,\tr(G_H^0 P))^\T,$$
where $G_h^0$ is the centered $G_h$ and $\E [G_h^0]=0$. Define $V=\cov(\Gamma)$, 
and standardize $\Gamma$ as $\tilde \Gamma=V^{-1/2}(\Gamma-\E[\Gamma])$ when $V$ is nondegenerate. Denote $\vec(M)$ as vectorizing $M$ by stacking its column vectors.
We can use the following proposition to obtain the explicit form of  $\tilde \Gamma$.

 \begin{prop}\label{prop:multi-scaling}
We have
     $$\tilde \Gamma=V^{-1/2}(\Gamma-\E [\Gamma])=(\tr (\tilde G_1 P),\tr (\tilde G_2 P),\ldots,\tr (\tilde G_H P))^\T,$$
     where 
\begin{align*}\left(\begin{matrix}\vec(\tilde G_1)^\T\\
   \vec(\tilde G_2)^\T\\
    \vdots\\
    \vec(\tilde G_H)^\T
 \end{matrix}\right)=V^{-1/2}\cdot \left(\begin{matrix} \vec (G_1^0)^\T\\
    \vec (G_2^0)^\T\\
    \vdots\\
    \vec  (G_H^0)^\T
 \end{matrix}\right).
 \end{align*}
 \end{prop}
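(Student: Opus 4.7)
The proof reduces to the simple observation that the permutation statistic $\tr(M P)$ is a linear functional of the matrix $M$, combined with the centering identity established earlier.

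The plan is to proceed in three short steps. \textbf{First}, I would record the centering: from the discussion following Proposition \ref{prop:var}, writing $G_h^0 = [g_{h,ij} - \bar g_{h,[k]i\cdot} - \bar g_{h,[k]\cdot j} + \bar g_{h,[k]\cdot\cdot}]$ within each stratum gives $W_{G_h,\pi} - \E W_{G_h,\pi} = W_{G_h^0,\pi}$. In trace form this reads $\tr(G_h P) - \E\tr(G_h P) = \tr(G_h^0 P)$ for each $h = 1,\ldots,H$, so
\[
\Gamma - \E\Gamma = \bigl(\tr(G_1^0 P), \tr(G_2^0 P), \ldots, \tr(G_H^0 P)\bigr)^\T.
\]

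\textbf{Second}, I would apply $V^{-1/2}$ coordinate by coordinate. Writing $(V^{-1/2})_{hh'}$ for the $(h,h')$-entry, the $h$-th component of $\tilde\Gamma$ equals $\sum_{h'=1}^H (V^{-1/2})_{hh'}\,\tr(G_{h'}^0 P)$. By linearity of the trace and the fact that the multiplication by $P$ is a fixed linear operation, I can pull the sum inside:
\[
\sum_{h'=1}^H (V^{-1/2})_{hh'}\,\tr(G_{h'}^0 P) \;=\; \tr\!\Bigl(\Bigl(\textstyle\sum_{h'=1}^H (V^{-1/2})_{hh'}\, G_{h'}^0\Bigr) P\Bigr).
\]
Defining $\tilde G_h := \sum_{h'=1}^H (V^{-1/2})_{hh'}\, G_{h'}^0$ then yields $\tilde\Gamma_h = \tr(\tilde G_h P)$, which is the first claim of the proposition.

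\textbf{Third}, I would verify the stated vectorized identity. Since $\vec(\cdot)$ is a linear map, the definition $\tilde G_h = \sum_{h'} (V^{-1/2})_{hh'}\, G_{h'}^0$ is equivalent to $\vec(\tilde G_h)^\T = \sum_{h'} (V^{-1/2})_{hh'}\, \vec(G_{h'}^0)^\T$, which is exactly the $h$-th row of the matrix product $V^{-1/2} \cdot [\vec(G_1^0)^\T;\ldots;\vec(G_H^0)^\T]$. There is no genuine obstacle here — the only bookkeeping to watch is that $V^{-1/2}$ is $H\times H$ and acts on the ``matrix index'' $h$ while leaving the ``entry index'' $(i,j)$ untouched, which is precisely what left-multiplication does when matrices $G_{h'}^0$ are stacked as rows in vectorized form. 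Thus the only non-cosmetic input is the linearity of $M \mapsto \tr(MP)$, and the proposition follows.
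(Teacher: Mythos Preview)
Your proposal is correct and is essentially the same argument as the paper's. The paper compresses steps two and three into one line by writing $\tr(G_h^0 P)=\vec(G_h^0)^\T\vec(P)$, so that $\Gamma-\E\Gamma$ equals the stacked matrix of $\vec(G_h^0)^\T$'s times $\vec(P)$, and then left-multiplies by $V^{-1/2}$; this is exactly your linearity-of-trace argument phrased via the vec inner product.
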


Recall $M_n^\textup{s}$ defined in \eqref{eq:Mns}.
By Proposition \ref{prop:multi-scaling}, we only need to consider $G_h\in  M_n^\textup{s}$. Since for any matrices $\{G_h\}_{h=1}^H$ and permutation $\pi$, we always have $\tilde G_h\in  M_n^\textup{s}, h=1,\ldots,H$.

Let $b \in \mathbb{R}^H$ be a unit vector, i.e., $\|b\|_2 = 1$. Consider a vector $\Gamma$ with $H$ components, where each component $G_h \in M_n^\textup{s}$ for $h = 1, 2, \dots, H$. We aim to study the asymptotic behavior of $b^\T \Gamma$.

Denote $$\langle G_h,G_l\rangle_K=\sumk(\nk-1)^{-1}\sumijk g_{h,ij}g_{l,ij}.$$ 
To ensure $\Gamma$ has been standardized, it is equivalent to impose the following condition:
 
\begin{condition}\label{cond:multibeb}
    $G_1,G_2,\ldots,G_H\in M_n^\textup{s}$ and $\langle G_h,G_l\rangle_K=0,\forall h\neq l $.
\end{condition}

The assumption $G_h\in M_n^\textup{s},\quad h=1,\ldots,H$ ensures that each element of $\Gamma$ is standardized, and $\langle G_h,G_l \rangle_K=0$ for all $h\neq l$, ensures the covariance between distinct components of the vector $\Gamma$ is zero.

\begin{prop}\label{prop:cond_equ}
Under Condition \ref{cond:multibeb}, we have that $\E  [\Gamma] =\boldsymbol 0_H$ and $\var(\Gamma)=\boldsymbol I_H.$
\end{prop}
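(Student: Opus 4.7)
The plan is to verify each claim by direct application of Proposition \ref{prop:var} combined with the definitions encoded in Condition \ref{cond:multibeb}. For the mean, since $G_h \in M_n^\textsc{s}$ the row and column means within every stratum vanish, so in particular $\bar{g}_{h,[k]\cdot\cdot} = 0$ for each $k$; Proposition \ref{prop:var} then yields $\E W_{G_h,\pi} = \sum_k n_{[k]} \bar{g}_{h,[k]\cdot\cdot} = 0$, giving $\E\Gamma = 0_H$. The diagonal entries of $\var(\Gamma)$ are immediate, because $G_h \in M_n^\textsc{s}$ includes $\sigma_{G_h}^2 = 1$ by definition, so $\var(W_{G_h,\pi}) = 1$ for every $h$.

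For the off-diagonal entries, I would first establish a covariance analogue of Proposition \ref{prop:var}: for any two matrices $G_h, G_l$,
\begin{align*}
\cov(W_{G_h,\pi}, W_{G_l,\pi}) = \sum_{k=1}^K \frac{1}{n_{[k]}-1}\sum_{i,j\in\Ik}(g_{h,ij}^0)(g_{l,ij}^0),
\end{align*}
where $g^0$ denotes the within-stratum row-, column-, and grand-mean centering used in Proposition \ref{prop:var}. The cleanest way to obtain this is the polarization identity $\cov(X,Y) = \tfrac{1}{4}[\var(X+Y)-\var(X-Y)]$ combined with the linearity $W_{G_h\pm G_l,\pi} = W_{G_h,\pi}\pm W_{G_l,\pi}$ and the fact that centering is linear, so Proposition \ref{prop:var} can be applied to $G_h \pm G_l$ directly. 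Alternatively, one can repeat the independence-across-strata argument from the proof of Proposition \ref{prop:var} and track cross terms; both routes give the same formula.

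Once this covariance formula is in hand, the proposition follows immediately: since $G_h, G_l \in M_n^\textsc{s}$, all row, column, and grand means within strata vanish, so $g_{h,ij}^0 = g_{h,ij}$ and $g_{l,ij}^0 = g_{l,ij}$, reducing the expression to $\langle G_h, G_l\rangle_K$, which Condition \ref{cond:multibeb} sets to zero for $h \ne l$. Combined with the already-verified unit variances, this gives $\var(\Gamma) = I_H$. There is no genuine obstacle here: the only step that requires any real work is establishing the cross-covariance formula, and the polarization shortcut reduces even that to invoking Proposition \ref{prop:var} twice, so the proof is essentially a bookkeeping exercise once the centering assumption in $M_n^\textsc{s}$ is exploited.
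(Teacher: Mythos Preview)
Your proof is correct and follows the same overall structure as the paper's: both verify $\E\Gamma=0$ directly from the zero row/column means in $M_n^\textsc{s}$, and both reduce the covariance computation to showing $\cov(W_{G_h,\pi},W_{G_l,\pi})=\langle G_h,G_l\rangle_K$. The only difference is how that covariance formula is obtained. The paper invokes an external result (Lemma~S1(ii) of \citet{shi2022berry}) giving the per-stratum covariance directly, then sums over strata. Your polarization argument---applying Proposition~\ref{prop:var} to $G_h\pm G_l$ and using $\cov(X,Y)=\tfrac14[\var(X+Y)-\var(X-Y)]$---is a self-contained derivation that avoids the external citation and stays entirely within the paper's own machinery. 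Both routes are equally valid; yours is marginally more elementary, while the paper's is marginally shorter once the cited lemma is granted.
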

Now, we present BEB on the linear combination of multivariate stratified linear permutation statistics.

\begin{coro}\label{coro:multibeb}
    For matrix $G_1,\ldots,G_H$ satisfying Condition \ref{cond:multibeb} and for any $\|b\|_2=1, b\in \mathbb R^H$, there exists a universal constant $C$ such that
    $$\sup_{t\in \mathbb R}\left| \pr(b^\T \Gamma\le t)-\Phi(t)\right|\le C\max_{1\le k\le K}\max_{i,j\in\Ik}\left|\sumh b_h g_{h,ij}\right|.$$
\end{coro}

    The right-hand sides of the inequalities in Corollary \ref{coro:multibeb} depend on the maximal element of $\sumh b_h G_h$.

\subsection{Permutational BEB over convex sets}
In this subsection, we extend \citet[Theorem 2.1]{fang2015rates} to stratified linear permutation statistics. Denote $\mathcal C$ as the collection of all Borel convex sets. Let $\xi_H$ denote an $H$-dimensional standard normal distribution. We have the following theorem.

\begin{theorem}\label{thm:convexBEB}
    Assume $|g_{h,ij}|\le B_n$ for $h=1,\ldots,H$ and $i,j\in \Ik, k=1,\ldots,K$. Assume Condition \ref{cond:multibeb}. There exists a universal constant $C>0$, such that 
    \begin{align*}
 & \sup_{V\in \mathcal C}|\pr(\Gamma\in V)-\pr(\xi_H \in V)|\\
 &\le CH^{{13}/{4}}B_n(n B_n^2+K)+CH^{3/4}B_n+CH^{13/8}(n-K)^{1/4}B_n^{3/2}+CH^{11/8}(n-K)^{1/2}B_n^2.
    \end{align*}
    When $B_n=O((n-K)^{-1/2})$, we have
    $$\sup_{V\in \mathcal C}|\pr(\Gamma\in V)-\pr(\xi_H \in V)|= O\left(\frac{H^{13/4}K}{(n-K)^{1/2}}\right).$$
\end{theorem}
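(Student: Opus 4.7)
The plan is to adapt the multivariate exchangeable-pair Stein's method of \citet{fang2015rates} for convex sets to the stratified permutation setting. The exchangeable pair $(\Gamma,\Gamma')$ I would build is: draw a stratum index $B$ with $\pr(B=k)=(\nk-1)/(n-K)$; given $B=k$, draw an ordered pair $(I,J)$ uniformly on $\{(i,j)\in\Ik^2:i\ne j\}$; set $\pi'=\pi\circ\tau_{IJ}$ and $\Gamma'=(\tr(G_hP'))_{h=1}^H$, where $P'$ is the permutation matrix of $\pi'$. Each coordinate difference
\begin{align*}
\Delta_h=g_{h,I\pi(J)}+g_{h,J\pi(I)}-g_{h,I\pi(I)}-g_{h,J\pi(J)}
\end{align*}
is uniformly bounded by $4B_n$, matching Fang's boundedness hypothesis.

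The crucial structural input is the linearity $\E[\Gamma'-\Gamma\mid\Gamma]=-\Lambda\Gamma$ with $\Lambda=\tfrac{2}{n-K}I_H$. Using the within-stratum zero row and column sums of $G_h\in M_n^\textsc{s}$, a short calculation yields
\begin{align*}
\sum_{i\ne j\in\Ik}\Delta_h=-2\nk\sum_{i\in\Ik}g_{h,i\pi(i)},
\end{align*}
and averaging against $(\nk-1)/(n-K)\cdot 1/(\nk(\nk-1))$ collapses the $\nk$-dependence and produces $\E[\Delta_h\mid\pi]=-\tfrac{2}{n-K}W_{G_h,\pi}$. Combined with $\E\Gamma=0$ and $\cov(\Gamma)=I_H$ from Proposition \ref{prop:cond_equ}, this lets me invoke the abstract convex-set BEB of \citet[Theorem 2.1]{fang2015rates} with inverse Stein coefficient of order $n-K$.

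The remaining work is to control the three ingredients fed into Fang's bound: (i) the third-moment term $\sum_h\E|\Delta_h|^3\le 4B_n\sum_h\E\Delta_h^2$, of order $HB_n/(n-K)$ through the exchangeability identity $\E\Delta\Delta^\T=2\Lambda$; (ii) the Frobenius deviation $\E\|\E[\Delta\Delta^\T\mid\pi]-2\Lambda\|_F$; and (iii) crude entry-wise $L^4$-type bounds that yield the $nB_n^3$ and $B_n^2(n-K)^{1/2}$ pieces of the final estimate. For (ii) I would expand the sixteen cross products appearing in $\Delta_h\Delta_l$, sum over ordered pairs within each stratum, and use zero row and column sums to collapse the leading contribution to $\tfrac{4}{n-K}\langle G_h,G_l\rangle_K$, which vanishes off diagonal by Condition \ref{cond:multibeb} and equals $\tfrac{4}{n-K}$ on diagonal; the residual pieces pair entries sharing a row or column, each bounded entry-wise by $B_n^2$.

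The main obstacle I anticipate is the variance estimate in step (ii): to recover Fang's $(n-K)^{1/2}B_n^2$ factor one needs a sharp bound on $\var(\E[\Delta_h\Delta_l\mid\pi])$ under the stratified permutation measure, and the relevant sum ranges over quadruples of indices $(i,j,i',j')$ that either lie inside a single stratum or split across strata, each configuration producing a distinct cancellation driven by $\langle G_h,G_l\rangle_K=0$. Once these building blocks are assembled, substituting them into Fang's abstract estimate—together with the $H$-dependent factors $H^{13/4},H^{13/8},H^{11/8},H^{3/4}$ arising from heat-semigroup smoothing of convex-set indicators and its Taylor remainders—delivers the four summands in the asserted bound, and the specialization to $B_n=O((n-K)^{-1/2})$ follows by elementary arithmetic.
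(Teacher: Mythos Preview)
Your plan is correct and matches the paper's: the same stratified exchangeable pair, the linearity $\E[\Gamma'-\Gamma\mid\Gamma]=-\tfrac{2}{n-K}\Gamma$, and then \citet[Theorem~2.1]{fang2015rates} applied to the induced Stein coupling $(\Gamma,\Gamma',-(n-K)D/4)$ with $D=\Gamma-\Gamma'$.

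One simplification you are missing dissolves the ``main obstacle'' you anticipate. After averaging over $(I,J)$ given $B=k$, each quantity $\E(D_hD_l\mid\pi,B=k)$ (and likewise $\E(D_hD_lD_m\mid\pi,B=k)$) depends only on $\pi$ restricted to $\Ik$. Since $\pi\sim\textup{Uniform}(\Pi_K)$ makes these restrictions independent across strata, the variance of $\E(D_hD_l\mid\pi)=\sum_k\tfrac{\nk-1}{n-K}\E(D_hD_l\mid\pi,B=k)$ is simply the sum of within-stratum variances weighted by $\bigl(\tfrac{\nk-1}{n-K}\bigr)^2$, with no cross terms; quadruples that ``split across strata'' contribute nothing. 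The remaining within-stratum bounds $\var\{\E(D_hD_l\mid\pi,B=k)\}\le CB_n^4/\nk$ and $\var\{\E(D_hD_lD_m\mid\pi,B=k)\}\le CB_n^6/\nk$ are exactly the single-stratum estimates already established in \citet{shi2022berry}, so the stratified case needs no new four-index combinatorics. A minor point: the inputs to Fang's theorem here are $\alpha=\sup\|X\|_2$, $\beta=\sup\|D\|_2$, $\E\|D\|_2^2$, and the three conditional-variance quantities $\mathcal B_1,\mathcal B_2,\mathcal B_3$, not a raw third-moment term; your list (i)--(iii) should be realigned with these five ingredients.
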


When $K=1$, the bound degenerates to the non-stratification case, which is the same as \citet[Theorem S2]{shi2022berry}.

\section*{Acknowledgements}
Pengfei Tian was supported by the Beijing Natural Science Foundation (QY23081).
Peng Ding was supported by the U.S. National Science
Foundation (grants \# 1945136 and 2514234). Fan Yang is also affiliated
with the Yanqi Lake Beijing Institute of Mathematical
Sciences and Applications.

\bibliographystyle{main}
\bibliography{main}

\appendix
\phantomsection 
\setcounter{equation}{0}
\renewcommand{\theequation}{S\arabic{equation}}
\setcounter{table}{0}
\renewcommand{\thetable}{S\arabic{table}}
\setcounter{figure}{0}
\renewcommand{\thefigure}{S\arabic{figure}}
\setcounter{theorem}{0}
\renewcommand{\thetheorem}{S\arabic{theorem}}
\setcounter{lemma}{0}
\renewcommand{\thelemma}{S\arabic{lemma}}
\setcounter{condition}{0}
\renewcommand{\thecondition}{S\arabic{condition}}
\setcounter{remark}{0}
\renewcommand{\theremark}{S\arabic{remark}}
\setcounter{prop}{0}
\makeatletter
\renewcommand{\theprop}{S\arabic{prop}}
\makeatother
\setcounter{coro}{0}
\makeatletter
\renewcommand{\thecoro}{S\arabic{coro}}
\makeatother

\makeatletter
\providecommand{\theHtheorem}{S\arabic{theorem}}
\providecommand{\theHprop}{S\arabic{prop}}
\providecommand{\theHlemma}{S\arabic{lemma}}
\providecommand{\theHcoro}{S\arabic{coro}}
\makeatother

\noindent
\newpage

\renewcommand{\thepage}{S\arabic{page}} 
\setcounter{page}{1}

\begin{center}
\Huge
Supplementary Material
\end{center}

 Section \ref{sec:basic tool}  contains basic tools for Stein's method and technical lemmas for proofs.

 Section \ref{sec:proof-main} provides proofs for the results in the main paper. 
 
 Section \ref{sec:proof-supp} provides proofs for the results in the supplementary material.

\section{Basic Tools}\label{sec:basic tool}
\subsection{Basic Notation and Lemmas}
\subsubsection{Permutation and Stein pair}\label{sec:construct stein pair}
For matrix $A=[a_{ij}]$, recall $\mu_A=\E [W_{A,\pi}]$, $\sigma^2_A=\var(W_{A,\pi})$ and $\beta_{A[k]}=\sumijk |a_{ij}^0|^3/\sigma_A^3$. 
Recall $A^0=[a_{ij}^0=a_{ij}-\bar a_{[k]i\cdot}-\bar a_{[k]\cdot j}+\bar a_{[k]\cdot\cdot}]$, $A^\textup{s}=[a^0/\sigma_A]$, and $A'=[a'_{ij}=a_{ij}^\textup{s}\bold 1(|a_{ij}^\textup{s}|\le 1/2)]$. Define $(S,S')$ as an exchangeable pair when $(S,S')$ and $(S',S)$ have the same distribution.

Recall the definition of Stein pair \citep{chen2011normal}:
\begin{definition}[Stein Pair]
    A pair of random variables $(W,W')$ is $\lambda$-Stein pair if and only if $(W,W')$ and $(W',W)$ have the same distribution and $\E[W'\mid W]=(1-\lambda)W$.
\end{definition}

We randomly sample a stratum index $B$ from the set $\{1,\ldots,K\}$, with the probability of selection of each stratum index proportional to the size of the stratum minus one, i.e.,  
$\pr(B=k)={(\nk-1)}/{(n-K)}.$

Let $\rho_{i,j}$ denote the permutation that swaps elements $i$ and $j$ while leaving other positions fixed.
Within stratum $B$, we randomly sample two indices $I,J$ without replacement, i.e. the conditional probability of selecting indices $I=i, J=j$ given $B=k$ is $$\pr(I=i, J=j\mid B=k)=\frac{\bold {1}(i,j\in\Ik ,i\neq j)}{\nk(\nk-1)}.$$

 Define $\pi''=\pi \rho_{I,J}$ based on the original random permutation $\pi$ and the random transposition $\rho_{I,J}$. Define $W_{A,\pi''}=\sumk\sumik a_{i\pi''(i)}$. Recall $\mu_A=\E [W_{A,\pi}]=\sumk \sumijk a_{ij}/\nk$. We show that $W_{A,\pi}$ and $W_{A,\pi''}$ form a Stein pair in the following proposition:
\begin{prop}\label{prop:stein pair}
For $\pi\in \textup{Uniform}(\Pi_K)$ and $\pi''=\pi \rho_{I,J}$,  $(W_{A,\pi},W_{A,\pi''})$ and $(W_{A,\pi''},W_{A,\pi})$ have the same distribution and 
$$\E[W_{A,\pi''} \mid W_{A,\pi}]=\left(1-\frac{2}{n-K}\right)W_{A,\pi}+\frac{1}{n-K}\mu_A .$$
When $\mu_A=0$, the pair $(W_{A,\pi},W_{A,\pi''})$ forms a $2/(n-K)$-Stein pair.
\end{prop} 
When $K=1$, Proposition \ref{prop:stein pair} reduces to the example in \citet[Example 2.3]{chen2011normal}. 
When $\mu_A=0$, we can obtain $\cov(W_{A,\pi''}, W_{A,\pi})=\{1-2(n-K)^{-1}\}\var(W_{A,\pi})$. Moreover, we can compute the variance of $W_{A,\pi}-W_{A,\pi''}$ under $\mu_A=0$:  
\begin{align}\label{eq:var diff W_A,pi}
    \var(W_{A,\pi}-W_{A,\pi''})=\left\{2-2\left(1-2\cdot\frac{2}{n-K}\right)\right\} \var (W_{A,\pi}) =\frac{4\sigma^2_A}{n-K}.
\end{align}

The formula in \eqref{eq:var diff W_A,pi} shows that when fixing the variance of $W_{A,\pi}$ and the sample size $n$, increasing the number of strata $K$ results in a larger covariance for the difference between the Stein pair $(W_{A,\pi}, W_{A,\pi''})$.

We review the concentration inequality for exchangeable pairs as follows.

\begin{lemma}[Lemma 2.1 of \cite{chen2015error}]\label{lemma:concentration}
    Suppose $(S,S')$ is an exchangeable pair of square integrable random variables and satisfies the following approximate linearity condition 
    $$\E [S'-S\mid S]=-\lambda S+L$$
    for a positive number $\lambda$ and a random variable $L$. Then, for $a<b$, we have 
    \begin{align*}
        \pr(S\in [a,b])\le& \frac{ \E[|S|]+\E[|L|]/\lambda}{ \E [S^2] -\E [|SL|]/\lambda-1/2}\left(\frac{b-a}{2}+\delta\right)\\
        &~~~+\frac{1}{\E [S^2]- \E [|SL|]/\lambda-1/2}\sqrt{\var \left( \E\left[\frac 1 {2\lambda}(S'-S)^2 \bold{1}(|S'-S|\le \delta)\mid S\right] \right)},
    \end{align*}
    where $$\delta=\frac{\E[|S'-S|^3]}{\lambda}$$
    provided that $\E [S^2] -\E [|SL|]/\lambda-1/2>0.$
\end{lemma}

We provide bounds on some terms in Lemma \ref{lemma:concentration} with $S=W_{A,\pi}$ and $S'=W_{A,\pi''}$ in Theorem \ref{thm:bound for delta and B02} below. 

\begin{theorem}\label{thm:bound for delta and B02}
For $\pi\in \textup{Uniform}(\Pi_K)$ and $\pi''=\pi\rho_{I,J}.$ Define $\lambda=2/(n-K)$. 
\begin{enumerate}[label=(\roman*),leftmargin=*, align=left]
\item   We have
$$\frac{\E [|W_{A,\pi''}-W_{A,\pi}|^3]}{\lambda}\le 32 \sumk \sumijk |a_{ij}|^3/\nk.$$
\item We have 
\begin{align*}
 &\var\left(\E \left[ \frac{1}{2\lambda}(W_{A,\pi}-W_{A,\pi''})^2 \bold 1(|W_{A,\pi}-W_{A,\pi''}|\le \delta )\mid W_{A,\pi}\right]\right)\\
\le& 56\times 32 \left(\sumk \sumijk |a_{ij}|^3/\nk\right)^2.
\end{align*}
\end{enumerate}

\end{theorem}

\subsubsection{Zero-bias Transformation Construction}\label{sec:construct zero-bias}

In this section, we review lemmas that construct the zero-bias transformation and explain how the zero-bias transformation helps bound the Wasserstein distance to the standard normal distribution.
We then establish the properties of the permutation $\pi^\ddagger$ used to obtain a zero-bias transformation of $W_{A,\pi}$, based on the Stein pair $(W_{A,\pi}, W_{A,\pi''})$ in Proposition~\ref{prop:stein pair}.

Given a Stein pair $(Y',Y'')$, suppose there exists another pair of random variables $(Y^\dagger, Y^\ddagger)$ satisfying \eqref{eq:y^d y^dd<- y' y''} below.
Then a zero-bias transformation of $Y'$ can be constructed as follows.

\begin{lemma}[Proposition 4.6 in \citet{chen2011normal}]\label{lemma:chen prop4.6}
Let $Y^{\prime}$, $Y^{\prime \prime}$ be a $\lambda$-Stein pair with $\var(Y')=\sigma^2 \in(0, \infty)$. If
\begin{align}\label{eq:y^d y^dd<- y' y''}
\pr\left(Y^\dagger =y^{\prime}, Y^\ddagger =y''\right)=\frac{\left(y'-y''\right)^2}{2 \lambda \sigma^2} \pr\left(Y'=y', Y''=y''\right)
\end{align}
and $U \sim \textup{Uniform}([0,1])$ is independent of $Y^{\dagger}, Y^{\ddagger}$, then $Y^*=U Y^{\dagger}+(1-U) Y^{\ddagger}$ is a zero bias transformation of $Y^{\prime}$.
\end{lemma}

In many applications, a Stein pair $(Y',Y'')$ admits a representation of the form
\[
(Y',Y'')=\psi(\boldsymbol I,\Xi_1,\ldots,\Xi_n)
\]
for some function $\psi$, where $\boldsymbol I$ and $\Xi_1,\ldots,\Xi_n$ are random variables.
For $(W_{A,\pi}, W_{A,\pi''})$, we have $\boldsymbol I=(B,I,J)$ and $\Xi_\alpha=\pi(\alpha)$ for $\alpha=1,\ldots,n$.
The following lemma shows that condition \eqref{eq:y^d y^dd<- y' y''} can be verified by checking the corresponding condition on $(\boldsymbol I,\Xi_1,\ldots,\Xi_n)$.

\begin{lemma}[Lemma 4.4 in \citet{chen2011normal}]\label{lemma:chenlemma4.4}
Let $(Y',Y'')$ be a Stein pair and suppose there exist  an $\mathbb{R}^2$-valued function $\left(y^{\prime}, y^{\prime \prime}\right)=\psi(\mathbf{i}, \xi_\alpha, \alpha =1,\ldots,n)$ and random variables $\mathbf{I}$, $\Xi_\alpha$, $\alpha =1,\ldots,n$ such that
$$
\left(Y^{\prime}, Y^{\prime \prime}\right)=\psi\left(\mathbf{I}, \Xi_\alpha, \alpha =1,\ldots,n\right).
$$
Furthermore, if there exist random variables $\mathbf{I}^{\dagger}, \Xi_\alpha^{\dagger}, \alpha =1,\ldots,n$ such that
$$
\pr\left(\mathbf{I}^\dagger =\mathbf{i}, \Xi^\dagger_\alpha=\xi_\alpha, \alpha =1,\ldots,n\right)=\frac{\left(y^{\prime}-y^{\prime \prime}\right)^2}{\E\left[Y^{\prime}-Y^{\prime \prime}\right]^2} \pr\left(\mathbf{I}=\mathbf{i}, \Xi_\alpha=\xi_\alpha, \alpha =1,\ldots,n \right),
$$
then the pair
$$
\left(Y^{\dagger}, Y^{\ddagger}\right)=\psi\left(\mathbf{I}^{\dagger}, \Xi_\alpha^{\dagger}, \alpha =1,\ldots,n\right)
$$
satisfies \eqref{eq:y^d y^dd<- y' y''}.
\end{lemma}

For $(W_{\pi^\dagger}, W_{\pi^\ddagger})$, we take 
$\boldsymbol I^\dagger=(B^\dagger, I^\dagger, J^\dagger, P^\dagger, Q^\dagger)$
and $\Xi_\alpha^\dagger=\pi^\dagger(\alpha)$ for $\alpha=1,\ldots,n$.
We apply Lemma~\ref{lemma:chenlemma4.4} to prove Lemma~\ref{thm:zerobias}.
We then state the Wasserstein bound for normal approximation based on the zero-bias transformation.

\begin{lemma}[Theorem 4.1 in \cite{chen2011normal}]\label{lemma:W-distance zerobia}
    Let $W$ be a mean-zero, variance $1$ random variable, and let $W^*$ be its zero-bias distribution defined on the same probability space. We have
    $$d_{\textup{W}}(W,\mathcal N(0,1))\le 2\E[|W^*-W|].$$
\end{lemma}

Recall the $\pi^\dagger$ in Section \ref{subsec:pi dagger construct}. It satisfies the following properties:
\begin{prop}\label{prop:permutation property}
Given $B^\dagger=k,I^\dagger=i,J^\dagger=j,P^\dagger=p,Q^\dagger=q$, $\pi^\dagger$ is a permutation that satisfies the following conditions:
\begin{itemize}
    \item $\pi^\dagger(m)=\pi(m)$ for all $m\notin\{i,j,\pi^{-1}(p),\pi^{-1}(q)\}$.
    \item $\{\pi^\dagger(i),\pi^\dagger(j)\}=\{p,q\}$.

\end{itemize}
Furthermore, 
$$\pr(\pi^\dagger(m)=\xi^\dagger_m,m\notin \{i,j\}, m\in \Ik )=\frac 1{(\nk-2)!},$$
for all distinct $\xi_m^\dagger, m\notin\{i,j\},m\in\Ik $ with $\xi_m^\dagger\notin\{p,q\},\xi_m^\dagger\in \Ik $.    
\end{prop}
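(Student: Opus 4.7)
The plan is to verify Properties 1 and 2 by direct case analysis on the three branches defining $\pi^\dagger$, and then to establish Property 3 via a conditional counting argument exploiting the independence between $\pi$ and $(B^\dagger,I^\dagger,J^\dagger,P^\dagger,Q^\dagger)$.

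For Property 1, in each branch $\pi^\dagger=\pi\circ\sigma$ where $\sigma$ is a product of at most two transpositions whose supports lie inside $\{i,j,\pi^{-1}(p),\pi^{-1}(q)\}$; hence $\sigma$ fixes every $m$ outside that four-element set, so $\pi^\dagger(m)=\pi(m)$ there. For Property 2, I would evaluate $\pi^\dagger$ at $i$ and $j$ in each branch. In branch one, the hypotheses $q=\pi(i)$, $p\neq q$, and $p\neq\pi(j)$ force $\pi^{-1}(p)\notin\{i,j\}$, so $\tau_{\pi^{-1}(p),j}$ fixes $i$ and sends $j$ to $\pi^{-1}(p)$, giving $\pi^\dagger(i)=q,\pi^\dagger(j)=p$. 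Branch two is symmetric. Branch three splits into the generic subcase ($q\neq\pi(i),p\neq\pi(j)$), where the two disjoint transpositions yield $\pi^\dagger(i)=p,\pi^\dagger(j)=q$, and the degenerate subcase ($q=\pi(i),p=\pi(j)$), where $\pi^{-1}(p)=j$ and $\pi^{-1}(q)=i$ make the two transpositions coincide and cancel so $\pi^\dagger=\pi$. In every subcase $\{\pi^\dagger(i),\pi^\dagger(j)\}=\{p,q\}$.

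For Property 3, since $(B^\dagger,I^\dagger,J^\dagger,P^\dagger,Q^\dagger)$ is sampled independently of $\pi$, conditional on that tuple being $(k,i,j,p,q)$ the permutation $\pi$ is still uniform on $\Pi_K$, and the construction alters $\pi$ only inside stratum $k$. Fixing a target $\xi^\dagger:\Ik\setminus\{i,j\}\to\Ik\setminus\{p,q\}$, I would count preimages separately under the two admissible extensions of $\xi^\dagger$ to $\Ik$. For the extension with $\pi^\dagger(i)=q,\pi^\dagger(j)=p$, branches one and two each contribute $\nk-2$ preimages, indexed by $\pi^{-1}(p)\in\Ik\setminus\{i,j\}$ and $\pi^{-1}(q)\in\Ik\setminus\{i,j\}$ respectively, and the degenerate subcase of branch three contributes one more preimage $\pi=\pi^\dagger$, totalling $2\nk-3$. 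For the extension with $\pi^\dagger(i)=p,\pi^\dagger(j)=q$, only the generic subcase of branch three contributes, with preimages parameterized by $(\pi^{-1}(p),\pi^{-1}(q))=(m_0,m_1)\in(\Ik\setminus\{j\})\times(\Ik\setminus\{i\})$ with $m_0\neq m_1$, giving $(\nk-1)^2-(\nk-2)$ preimages. Summing yields $\nk(\nk-1)$ preimages out of $\nk!$ equally likely values of $\pi|_{\Ik}$, so the conditional probability equals $1/(\nk-2)!$.

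The main obstacle is the bookkeeping in Property 3: the two extensions of $\xi^\dagger$ on $\{i,j\}$ receive markedly different preimage counts, so the stated uniformity is a cancellation effect rather than a symmetry. Particular care is needed in the generic third subcase to include the boundary configurations $\pi^{-1}(p)=i$ and $\pi^{-1}(q)=j$, while excluding $\pi^{-1}(p)=j$, $\pi^{-1}(q)=i$, and $\pi^{-1}(p)=\pi^{-1}(q)$; once these sub-sub-cases are tallied correctly, the contributions collapse to $\nk(\nk-1)$ and the claim follows.
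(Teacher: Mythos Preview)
Your proposal is correct. The paper does not spell out a proof of this proposition; it simply notes that the statement is an extension of Lemma~4.5 in \citet{chen2011normal}, and your case analysis on the three branches of $\pi^\dagger$, together with the preimage count yielding $(2\nk-3)+[(\nk-1)^2-(\nk-2)]=\nk(\nk-1)$, is exactly the argument underlying that lemma.
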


Proposition \ref{prop:permutation property} extends \citet[Lemma 4.5]{chen2011normal}.  The property of $\pi^\ddagger$ can be used to verify the condition of the construction of a zero-bias transformation in Lemma \ref{lemma:chenlemma4.4}.
\begin{remark}
   Recall the $\pi^\ddagger$ in Section \ref{subsec:pi dagger construct}. Given a realization of $(I^\dagger,J^\dagger,P^\dagger,Q^\dagger)$, denoted as $(i,j,p,q)$, $W_{A,\pi^\dagger}$ contains the term $a_{ip}+a_{jq}$. The transposition $\rho_{I^\dagger, J^\dagger}$ changes $a_{ip}+a_{jq}$ in $W_{A,\pi^\dagger}$ to $a_{jp}+a_{iq}$ in $W_{A,\pi^\ddagger}$, while leaving other terms unchanged. Consequently, the quantity $(a_{ip}+a_{jq}-a_{iq}-a_{jp})^2=(W_{A,\pi^\dagger}-W_{A,\pi^\ddagger})^2$ serves as a measure of the distance between $W_{A,\pi^\dagger}$ and $W_{A,\pi^\ddagger}$ given the realization $(i,j,p,q)$. By considering the design of the sampling process for $(I^\dagger, J^\dagger, P^\dagger, Q^\dagger)$, the sampling probability of $(I^\dagger, J^\dagger, P^\dagger, Q^\dagger)$ within stratum $B^\dagger$ is proportional to the squared Euclidean distance between $W_{A,\pi^\dagger}$ and $W_{A,\pi^\ddagger}$. 
\end{remark}

\subsubsection{Technical Lemmas}\label{sec:A1}
Recall $\beta_{A[k]}=\sumijk |a_{ij}^0|^3/\sigma_A^3$ introduced in Section \ref{subsec:simplication}. We give the following propositions and lemmas which are the stratified analogues of those in \cite{chen2011normal}. 
\begin{prop}[$L_1$ Bound]\label{prop:L1}
For any matrix \(A \in {M}_n^\textup{s} \), we have
\[
\E [|W_{A,\pi}^*-W_{A,\pi}|] \le 80 \sum_{k=1}^K \frac{\beta_{A[k]  }}{\nk}.
\]
\end{prop}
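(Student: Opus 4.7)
The plan is to start from the explicit construction $W^*_{A,\pi} = U W_{A,\pi^\dagger} + (1-U) W_{A,\pi^\ddagger}$, use $U \in [0,1]$ with $\E U = 1/2$ (and $U$ independent of the permutation-related variables), and obtain
\[
\E|W^*_{A,\pi} - W_{A,\pi}| \;\le\; \tfrac{1}{2}\,\E|W_{A,\pi^\dagger} - W_{A,\pi}| + \tfrac{1}{2}\,\E|W_{A,\pi^\ddagger} - W_{A,\pi}|.
\]
It therefore suffices to bound the two expectations on the right separately. Because $\pi^\ddagger = \pi^\dagger \circ \tau_{I^\dagger J^\dagger}$ and $\pi^\dagger$ already differs from $\pi$ on at most the four positions $\mathcal{R} = \{I^\dagger, J^\dagger, \pi^{-1}(P^\dagger), \pi^{-1}(Q^\dagger)\}$, both $|W_{A,\pi^\dagger} - W_{A,\pi}|$ and $|W_{A,\pi^\ddagger} - W_{A,\pi}|$ can be written as at most eight absolute entries of $A$, all lying in the $B^\dagger$-th diagonal block. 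Concretely, for the three cases of Step 1 I would bound
\[
|W_{A,\pi^\dagger} - W_{A,\pi}| \;\le\; \sum_{i \in \mathcal{R}} \bigl(|a_{i\pi(i)}| + |a_{i\pi^\dagger(i)}|\bigr),
\]
with an analogous expression after one further transposition for $\pi^\ddagger$.

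Next I would pass to the expectation via the defining joint law of the zero-bias construction: the combined weight
\[
\pr(B^\dagger = k,\ I^\dagger = i,\ J^\dagger = j,\ P^\dagger = p,\ Q^\dagger = q) \;=\; \frac{(a_{ip} + a_{jq} - a_{iq} - a_{jp})^2}{4 \nk^2(\nk-1)\sigma_A^2}
\]
(after cancellation of $\sigma_{A[k]}^2$), and, crucially, $\pi$ remains uniformly distributed on $\Pi_K$ after conditioning on these external coordinates. I would then average over $\pi$: by Proposition \ref{prop:permutation property}-type reasoning, once $(B^\dagger, I^\dagger, J^\dagger, P^\dagger, Q^\dagger)$ is fixed, the image $\pi(m)$ for $m \notin \mathcal{R}$ is uniform over the remaining column indices in $\mathcal{I}_{[k]}$, and $\pi^\dagger(m)$ takes prescribed values on $\mathcal{R}$. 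Carrying out this expectation produces bounds of the form
\[
\E|W_{A,\pi^\dagger} - W_{A,\pi}| \;\le\; \frac{C}{\sigma_A^2} \sum_{k=1}^K \frac{1}{\nk^3} \sum_{i,j,p,q \in \mathcal{I}_{[k]}} (a_{ip} + a_{jq} - a_{iq} - a_{jp})^2 \cdot \frac{1}{\nk} \sum_{r,s \in \mathcal{I}_{[k]}} |a_{rs}|,
\]
plus a handful of analogous mixed sums where the linear factor $|a_{rs}|$ has one or two indices tied to $i,j,p,q$.

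The core algebraic step, and the main obstacle, is converting these mixed (squared) $\times$ (first-power) sums into the pure third-moment quantity $\beta_{A[k]} = \sum_{i,j} |a_{ij}^0|^3 / \sigma_A^3$ with the correct prefactor $1/\nk$. The workhorse is Young's inequality $x^2 |y| \le \tfrac{2}{3}|x|^3 + \tfrac{1}{3}|y|^3$ applied after expanding the square $(a_{ip}+a_{jq}-a_{iq}-a_{jp})^2$ into its sixteen cross-products; each cross-product pairs with the linear factor so that every resulting term is of the form $|a_{\alpha\beta}|^3$ on three of the four free indices, leaving one free index whose sum contributes the desired $\nk$. Here the working assumption $A \in M_n^\textsc{s}$ (row and column means zero within each stratum, as reduced to in Section \ref{subsec:simplication}) lets us identify $a_{ij}$ with $a_{ij}^0$ and discard various partial sums that would otherwise vanish. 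Book-keeping the resulting powers of $\nk$ gives a factor $1/\nk$ after matching with $\nk^3$ in the denominator, and summing the (at most $8$) terms with $4$ cross-products per term accounts for the universal constant $80$.

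The hardest part is precisely this last bookkeeping: tracking the indices that are tied versus free after the expansion, ensuring that every Young application lands in $|a_{ij}|^3$ rather than higher moments, and checking that the combinatorial multiplicity does not blow up $1/\nk$ to $1/\sqrt{\nk}$ or worse. Symmetry of the weight under $(i,p) \leftrightarrow (j,q)$ and the antisymmetry in swapping $p \leftrightarrow q$ remove several terms, which is what allows the clean third-moment form to emerge. Once this is done, summing over strata $k$ and combining the two contributions from $\pi^\dagger$ and $\pi^\ddagger$ yields the stated bound $\E|W^*_{A,\pi} - W_{A,\pi}| \le 80 \sum_{k=1}^K \beta_{A[k]}/\nk$.
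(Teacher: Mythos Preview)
Your approach is correct in principle, but it takes a much longer route than the paper. The key observation you are missing is that, conditional on $B^\dagger = k$, the entire zero-bias construction $(I^\dagger, J^\dagger, P^\dagger, Q^\dagger, \pi^\dagger, \pi^\ddagger)$ lives inside stratum $k$ and is \emph{exactly} the non-stratified zero-bias construction of \citet{chen2011normal} applied to the $\nk \times \nk$ block $A_{[k]}$. Hence the paper simply invokes \citet[Lemma 4.7]{chen2011normal} stratum-wise to get
\[
\E\bigl\{|W_{A,\pi}^*-W_{A,\pi}|\,\big|\,B^\dagger=k\bigr\}\le \frac{\sumijk|a_{ij}|^3}{(\nk-1)\sigma_{A[k]}^2}\Bigl(8+\frac{28}{\nk-1}+\frac{4}{(\nk-1)^2}\Bigr),
\]
and then averages against $\pr(B^\dagger=k)=\sigma_{A[k]}^2/\sigma_A^2$; the $\sigma_{A[k]}^2$ cancels and the constant $80$ falls out from $\frac{\nk}{\nk-1}(8+28+4)\le 80$ at $\nk=2$. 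Your proposal---expanding the square, pairing with the eight linear terms, applying Young's inequality term by term, and tracking the free versus tied indices---is precisely how one would \emph{prove} that Lemma 4.7 from scratch, so you are re-deriving the cited result rather than using it. Both arguments arrive at the same bound; the paper's is a two-line reduction, while yours exposes the mechanics (and the origin of the constant) at the cost of the bookkeeping you rightly flag as the hard part.
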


\begin{lemma}[Truncation Lemma]\label{lemma:truncation}
For $n\ge 2$, $A\in  M_n^\textup{s}$, recall $A'=[a_{ij}']$ where $a_{ij}'=a_{ij}^\textup{s}\bold 1(|a_{ij}^\textup{s}|\le 1/2)$, and $\mu_{A'}=\E [W_{A',\pi}]$. There exists $c_1\ge 1$ such that
$$\pr(W_{A,\pi}\neq W_{A',\pi})\le c_1\sumk \frac {\beta_{A[k]  }}{\nk},\quad  |\mu_{A'}|\le c_1\sumk\frac{\beta_{A[k]  }}{\nk}.$$
Additionally, there exist constants $\varepsilon_1$ and $c_2$ such that when $\sumk {\beta_{A[k]}}/{\nk}\le \varepsilon_1$
we have 
$$|\sigma_{A'}^2-1|\le c_2\sumk \frac{\beta_{A[k]  }}{\nk},~~(A')^\textup{s}\in M_n^1~~and~~\beta_{A'[k] }\le c_2\beta_{A[k]  }.$$
\end{lemma}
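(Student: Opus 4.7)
Since $A\in M_n^\textsc{s}$, the standardization is trivial: $a_{ij}^\textsc{s}=a_{ij}$, every stratum-wise row, column, and grand mean of $A$ within $\Ik$ vanishes, and $\sigma_A=1$. My plan is to peel off the large entries $B:=A-A'$ with $b_{ij}=a_{ij}\mathbf{1}(|a_{ij}|>1/2)$ and to bound every quantity by $\sumk \beta_{A[k]}/\nk$, using the elementary truncation inequality $|a_{ij}|^r\mathbf{1}(|a_{ij}|>1/2)\le 2^{3-r}|a_{ij}|^3$ for $r\in\{0,1,2\}$ together with the fact that the zero stratum-wise row and column sums of $A$ will force the centering corrections in the relevant formulas to collapse.

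For the first display, a union bound combined with the marginal uniformity of $\pi(i)$ on $\Ik$ gives
$$\pr(W_{A,\pi}\ne W_{A',\pi})\le\sumk\nk^{-1}\sumijk\mathbf{1}(|a_{ij}|>1/2)\le 8\sumk\beta_{A[k]}/\nk$$
from the $r=0$ case, while Proposition \ref{prop:var} together with $\sumijk a_{ij}=0$ yields $\mu_{A'}=-\sumk\nk^{-1}\sumijk a_{ij}\mathbf{1}(|a_{ij}|>1/2)$, whose absolute value is at most $4\sumk\beta_{A[k]}/\nk$ by the $r=1$ case. Any $c_1\ge 8$ handles both.

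For the variance perturbation, I would expand $1=\var(W_{A',\pi}+W_{B,\pi})$ and use $\cov(W_{A',\pi},W_{B,\pi})=\cov(W_{A,\pi},W_{B,\pi})-\var(W_{B,\pi})$ to obtain $\sigma_{A'}^2-1=-2\cov(W_{A,\pi},W_{B,\pi})+\var(W_{B,\pi})$. The bilinear analogue of Proposition \ref{prop:var} combined with $\sumjk a_{ij}=\sumik a_{ij}=0$ makes each of the three centering corrections inside $b_{ij}^0$ integrate to zero against $a_{ij}$, leaving
$$\cov(W_{A,\pi},W_{B,\pi})=\sumk(\nk-1)^{-1}\sumijk a_{ij}^2\mathbf{1}(|a_{ij}|>1/2)\le 2\sumk\beta_{A[k]}/(\nk-1)$$
by the $r=2$ case; the matching bound $\var(W_{B,\pi})\le 2\sumk\beta_{A[k]}/(\nk-1)$ follows from $r=2$ combined with the projection inequality $\sumijk(b_{ij}^0)^2\le\sumijk b_{ij}^2$. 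Combining yields $|\sigma_{A'}^2-1|\le 6\sumk\beta_{A[k]}/(\nk-1)\le 12\sumk\beta_{A[k]}/\nk$ whenever $\nk\ge 2$, so any $c_2\ge 12$ suffices for this inequality.

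For the final two claims, I would pick $\varepsilon_1$ small enough that the preceding bound forces $\sigma_{A'}\ge 1/\sqrt 2$; then $(A')^\textsc{s}=(A')^0/\sigma_{A'}$ has vanishing stratum-wise row/column means and unit variance by construction, and since $|a'_{ij}|\le 1/2$ forces every stratum-wise mean of $A'$ to lie in $[-1/2,1/2]$, we get $|(a'_{ij})^0|\le 2$, so every entry of $(A')^\textsc{s}$ is uniformly $O(1)$ and $(A')^\textsc{s}$ lies in $M_n^1$. For $\beta_{A'[k]}$, I would expand $(a'_{ij})^0$ into four pieces and apply the power-mean bound $(\sum_{\ell=1}^{4}x_\ell)^3\le 16\sum x_\ell^3$; the leading piece satisfies $\sumijk|a'_{ij}|^3\le\beta_{A[k]}$ trivially because truncation only shrinks cubed magnitudes, while Jensen applied to the identity $\bar a'_{[k]i\cdot}=-\nk^{-1}\sumjk a_{ij}\mathbf{1}(|a_{ij}|>1/2)$ (which follows from $\bar a_{[k]i\cdot}=0$) and its column and grand-mean analogs bounds each remaining piece by a constant multiple of $\beta_{A[k]}$; dividing by $\sigma_{A'}^3\ge 2^{-3/2}$ closes $\beta_{A'[k]}\le c_2\beta_{A[k]}$ after enlarging $c_2$. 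The main technical friction throughout is bookkeeping constants: the literal entry bound $|(a'_{ij})^\textsc{s}|\le 2/\sigma_{A'}$ is not $\le 1$ with truncation threshold $1/2$, so ``$(A')^\textsc{s}\in M_n^1$'' should be understood modulo the universal constant in that family's definition, which is harmless because downstream uses in Theorem \ref{theorem:beb mn1} only require entrywise $O(1)$ boundedness; and the collapse of $b_{ij}^0$ inside the covariance formula is not a generic bilinear identity but relies specifically on the stratum-wise zero-sum property of $A$, so each of the three centering corrections must be extracted and cancelled individually against $\sumjk a_{ij}=\sumik a_{ij}=0$.
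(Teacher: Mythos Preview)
Your bounds for $\pr(W_{A,\pi}\ne W_{A',\pi})$, $|\mu_{A'}|$, and $\beta_{A'[k]}$ are fine, and your covariance identity $\sigma_{A'}^2-1=-2\cov(W_{A,\pi},W_{B,\pi})+\var(W_{B,\pi})$ together with the collapse of the $b_{ij}^0$ centering against the zero row/column sums of $A$ is a cleaner route to the variance bound than the paper's direct computation of the row and column means of $A'$. Both approaches land on the same order, and yours avoids some algebra.

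The genuine gap is the claim $(A')^\textsc{s}\in M_n^1$. You only establish $|(a'_{ij})^0|\le 2$ from the trivial bound $|\bar a'_{[k]i\cdot}|\le 1/2$, which after dividing by $\sigma_{A'}\ge 1/\sqrt 2$ gives entries of size up to $2\sqrt 2$, not $1$. Your proposed workaround---reinterpreting $M_n^1$ ``modulo a universal constant'' because downstream only needs $O(1)$ entries---does not prove the lemma as stated, and in any case the downstream concentration argument in Section~\ref{sec:second bound proof of thm1} tracks explicit numerical constants that depend on the entry bound being exactly $1$. The missing idea is that the stratum-wise row and column means of $A'$ are not merely bounded by $1/2$ but are \emph{small}: from the very identity you wrote down, $\bar a'_{[k]i\cdot}=-\nk^{-1}\sumjk a_{ij}\mathbf 1(|a_{ij}|>1/2)$, your own $r=1$ truncation inequality gives $|\bar a'_{[k]i\cdot}|\le 4\nk^{-1}\sumjk|a_{ij}|^3\le 4\beta_{A[k]}/\nk\le 4\varepsilon_1$, and likewise for the column and grand means. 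Combining this with $|a'_{ij}|\le 1/2$ and $\sigma_{A'}\ge 2/3$ yields $|(a'_{ij})^0|/\sigma_{A'}\le 3/4+O(\varepsilon_1)$, which is $\le 1$ once $\varepsilon_1$ is taken small enough. That is exactly how the paper closes this step, and it is the one place where the choice of $\varepsilon_1$ does real work rather than just controlling $\sigma_{A'}$.
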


\begin{lemma}[Scaling Lemma]\label{lemma:scaling}
	There exist constants $\varepsilon_2$ and $c_3$ such that if $A\in M_n^\textup{s}$ for some $n\ge 2$, then whenever $\sumk\beta_{A[k]  }/\nk\le \varepsilon_2$, we have 
	$$\sup_{t\in\mathbb R}|\pr(W_{A,\pi}\le t)-\Phi(t)|\le \sup_{t\in \mathbb R}|\pr(W_{({A'})^\textup{s},\pi}\le t)-\Phi(t)|+c_3\sumk \beta_{A[k]  }/\nk.$$
\end{lemma}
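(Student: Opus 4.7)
The plan is to reduce the BEB for $W_{A,\pi}$ to that of $W_{(A')^\textsc{s},\pi}$ in two conceptual moves: first swap $W_{A,\pi}$ for its truncated version $W_{A',\pi}$, then recenter and rescale $A'$ to $(A')^\textsc{s}$, absorbing both distortions into the error term $c_3\sumk \beta_{A[k]}/\nk$.

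I would begin by invoking Lemma \ref{lemma:truncation} to get
\begin{equation*}
\sup_{t\in\mathbb{R}}|\pr(W_{A,\pi}\le t)-\pr(W_{A',\pi}\le t)|\le \pr(W_{A,\pi}\ne W_{A',\pi})\le c_1\sumk \beta_{A[k]}/\nk,
\end{equation*}
which handles the truncation step at the cost of the first summand in $c_3$. Next, by definition $W_{A',\pi}=\sigma_{A'}W_{(A')^\textsc{s},\pi}+\mu_{A'}$, so setting $s=(t-\mu_{A'})/\sigma_{A'}$ and applying the triangle inequality yields
\begin{equation*}
|\pr(W_{A',\pi}\le t)-\Phi(t)|\le \sup_{u\in\mathbb{R}}|\pr(W_{(A')^\textsc{s},\pi}\le u)-\Phi(u)|+\bigl|\Phi(s)-\Phi(\sigma_{A'}s+\mu_{A'})\bigr|.
\end{equation*}
Thus the remaining task is to bound the normal-to-normal gap $\sup_s|\Phi(s)-\Phi(\sigma_{A'}s+\mu_{A'})|$ by $O(\sumk\beta_{A[k]}/\nk)$ uniformly in $s$.

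The main obstacle is precisely this uniform normal-gap estimate, because a naive bound based on the Lipschitz constant $1/\sqrt{2\pi}$ of $\Phi$ produces a factor $|s|$ from the $(\sigma_{A'}-1)s$ piece that does not vanish. I would handle this by choosing $\varepsilon_2\le\varepsilon_1$ small enough that Lemma \ref{lemma:truncation} gives both $|\mu_{A'}|\le c_1\sumk\beta_{A[k]}/\nk$ and (after taking square roots via $|\sigma_{A'}-1|\le|\sigma_{A'}^2-1|/(\sigma_{A'}+1)$) $|\sigma_{A'}-1|\le C\sumk\beta_{A[k]}/\nk$ with $\sigma_{A'}\in[1/2,3/2]$. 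A mean value application writes the gap as $\phi(\xi)\bigl((\sigma_{A'}-1)s+\mu_{A'}\bigr)$ for some $\xi$ between $s$ and $\sigma_{A'}s+\mu_{A'}$; the $\mu_{A'}$ contribution is controlled by $|\mu_{A'}|/\sqrt{2\pi}$, while the $(\sigma_{A'}-1)s$ contribution uses the universal bound $\sup_{u\in\mathbb{R}}|u\phi(u)|=1/\sqrt{2\pi e}$ together with $|\xi-s|\le |s|/2+|\mu_{A'}|$, which forces $|\xi|\ge |s|/2-|\mu_{A'}|$ and hence $\phi(\xi)|s|$ is uniformly bounded by a universal constant.

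Finally, I would collect constants by taking $\varepsilon_2$ to be the minimum of $\varepsilon_1$ and a threshold ensuring $\sigma_{A'}\in[1/2,3/2]$, then bundle $c_1$ from the truncation step with the universal constants from the normal-gap estimate into a single $c_3$, and take the supremum over $t\in\mathbb{R}$ in the triangle inequality. This yields exactly the inequality claimed. I expect no further difficulty: once the $u\phi(u)$ boundedness argument for the tails is written carefully, the rest is bookkeeping on the quantitative estimates already provided by Lemma \ref{lemma:truncation}.
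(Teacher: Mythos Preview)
Your proposal is correct and follows essentially the same two-step approach as the paper's proof: first replace $W_{A,\pi}$ by $W_{A',\pi}$ via Lemma~\ref{lemma:truncation}, then control the normal-to-normal gap $|\Phi((t-\mu_{A'})/\sigma_{A'})-\Phi(t)|$ using the estimates $|\mu_{A'}|,|\sigma_{A'}-1|=O(\sumk\beta_{A[k]}/\nk)$. The only cosmetic difference is that the paper organizes the last step by a case split on whether $|t|$ is below or above $c_1\sumk\beta_{A[k]}/\nk$ (invoking a calculation from \citet[p.~179]{chen2011normal} for large $|t|$), while you apply the mean value theorem directly and use the boundedness of $u\mapsto|u|\phi(u)$; both routes exploit the same Gaussian tail decay.
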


Let $A$ be an $n\times n$ matrix, and let $\mathcal R,\mathcal C \subset \Ik$ be index sets with $|\mathcal R |=|\mathcal C |=l$.
Define $D$ as the $(n-l)\times (n-l)$ submatrix obtained by deleting from $A$ the rows indexed by $\mathcal R$ and the columns indexed by $\mathcal C$.
 
\begin{lemma}[Deleting Lemma]\label{lemma:inductive}
	There exist $\varepsilon_3>0$ and $c_4\ge 1$ such that if $\nk\ge 2$, $l\in\{2,3,4\} $ and $A\in M_n^1$, we have $|\mu_D|\le 8$. Further if $\beta_{A[k]  }/\nk\le \varepsilon_3$, we have $\sigma_D^2\le c_4$. 
\end{lemma}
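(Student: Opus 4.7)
The plan is to apply Proposition \ref{prop:var} to the reduced array $D$, exploiting that $A\in M_n^1$ has zero row and column means within every stratum, entries bounded by $1$, and $\sigma_A^2=1$. Only stratum $k$ changes under the deletion, so every computation reduces to understanding the new row, column, and grand means of the $m\times m$ stratum-$k$ submatrix, where $m=\nk-l$.

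For $|\mu_D|\le 8$, Proposition \ref{prop:var} gives $\mu_D=\sum_{k'}m_{k'}\bar d_{[k']\cdot\cdot}$. The contributions from $k'\neq k$ vanish because $A$ has zero stratum-wise means. For $k'=k$, the zero row/column-sum identities $\sum_{j\in\Ik}a_{ij}=0=\sum_{i\in\Ik}a_{ij}$ give
\begin{equation*}
\sum_{i\in\Ik\setminus\mathcal R,\,j\in\Ik\setminus\mathcal C}a_{ij}=\sum_{i\in\mathcal R,\,j\in\mathcal C}a_{ij}.
\end{equation*}
Bounding the right side by $l^2$ and the left by $m^2$ (via $|a_{ij}|\le 1$) yields $|\mu_D|=m^{-1}\big|\sum_{\text{submatrix}}a_{ij}\big|\le\min(l^2/m,\,m)\le l\le 4$, with the edge cases $m\in\{0,1\}$ handled trivially.

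For $\sigma_D^2\le c_4$, Proposition \ref{prop:var} decomposes $\sigma_D^2$ across strata. The $k'\neq k$ pieces sum to $\sigma_A^2-\sigma_{A[k]}^2=1-\sigma_{A[k]}^2\le 1$. For $k'=k$ with $m\ge 2$, I would apply the two-way ANOVA identity $\sum(a_{ij}-\tilde r_i-\tilde c_j+\tilde m)^2=\sum a_{ij}^2-m\sum\tilde r_i^2-m\sum\tilde c_j^2+m^2\tilde m^2$ for the submatrix, where $\tilde r_i,\tilde c_j,\tilde m$ are its row, column, and grand means. The same row/column cancellations bound $|\tilde r_i|,|\tilde c_j|\le l/m$ and $|\tilde m|\le\min(l^2/m^2,1)$, so $m^2\tilde m^2\le l^2$. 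Dropping the nonpositive $-m\sum\tilde r_i^2-m\sum\tilde c_j^2$ and using $\sum_{i,j\in\Ik}a_{ij}^2=(\nk-1)\sigma_{A[k]}^2\le\nk-1$, the stratum-$k$ contribution is at most $((\nk-1)\sigma_{A[k]}^2+l^2)/(m-1)$, whence
\begin{equation*}
\sigma_D^2\le 1+\sigma_{A[k]}^2\cdot\frac{l}{m-1}+\frac{l^2}{m-1}\le 1+\frac{l(l+1)}{m-1};
\end{equation*}
the cases $m\le 1$ contribute $0$ because the stratum-$k$ permutation is trivial.

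The main obstacle is that $1/(m-1)$ becomes large precisely when $\nk$ is only marginally larger than $l$, but since $l\le 4$ and $m\ge 2$ force $l(l+1)/(m-1)\le 20$, a universal $c_4$ already exists for every admissible $(\nk,l)$. The role of the extra hypothesis $\beta_{A[k]}/\nk\le\varepsilon_3$ is to sharpen that constant: combined with the H\"older bound $\sigma_{A[k]}^2\le \beta_{A[k]}^{2/3}\nk^{2/3}/(\nk-1)$ it forces $\sigma_{A[k]}^2$ to be small, which in turn drives $\sigma_D^2$ close to $1$, as the subsequent inductive deletion steps require when invoking this lemma repeatedly. I would then fix $\varepsilon_3$ and $c_4$ jointly at the end by collecting the constants arising from the two estimates above.
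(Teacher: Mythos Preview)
Your proof is correct and in fact cleaner than the paper's. Both arguments use Proposition~\ref{prop:var} and the zero row/column sums to show that only stratum $k$ is affected, and both derive the same cancellation identity $\sum_{i\notin\mathcal R,\,j\notin\mathcal C}a_{ij}=\sum_{i\in\mathcal R,\,j\in\mathcal C}a_{ij}$ for the mean. For the variance, however, the paper splits off the term $\sum_{i\in\mathcal R\text{ or }j\in\mathcal C}a_{ij}^2$ and controls it by H\"older as $\le 2l\,\nk^{1/3}\beta_{A[k]}^{2/3}\le 8\varepsilon_3^{2/3}\nk$, which is precisely where the hypothesis $\beta_{A[k]}/\nk\le\varepsilon_3$ enters; it then bounds the row/column/grand-mean terms crudely and arrives at $\sigma_D^2\le 48\varepsilon_3^{2/3}+102$. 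You instead use the ANOVA identity, drop the nonpositive $-m\sum\tilde r_i^2-m\sum\tilde c_j^2$, and bound $\sum_{\text{submatrix}}a_{ij}^2\le\sum_{i,j\in\Ik}a_{ij}^2=(\nk-1)\sigma_{A[k]}^2$, obtaining the unconditional bound $\sigma_D^2\le 1+l(l+1)/(m-1)\le 21$. Your route is more elementary and shows that the $\varepsilon_3$ assumption is not actually needed for the conclusion as stated.

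Your closing commentary on the role of $\varepsilon_3$ is the only inaccurate part. The H\"older bound $\sigma_{A[k]}^2\le\beta_{A[k]}^{2/3}\nk^{2/3}/(\nk-1)$ does \emph{not} force $\sigma_{A[k]}^2$ to be small under $\beta_{A[k]}/\nk\le\varepsilon_3$: it gives only $\sigma_{A[k]}^2\lesssim\varepsilon_3^{2/3}\nk^{1/3}$, which can be large. In the paper the condition is there purely to control the removed-entry sum via H\"older, not to sharpen the constant, and the lemma is invoked once (to bound $\mathcal A_2$), not in an inductive loop. This misreading does not affect the validity of your proof.
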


\begin{lemma}[Lemma 1 in \cite{miratrix2013}]\label{lemma: miratrix lemma1}
Let $W$  be a hypergeometric random variable with parameters $n_1$, $w$, and $n$, representing a sample of size $n_1$ drawn from a population of $n$ items, where $w = np$ are successes and $p$ is the probability of success.
Then $Y=(n_1 / W) \bold {1}(W>0)$ satisfies
$$
-\frac{2}{p}(1-p)^{n_1} \leqslant \mathbb{E}[Y]-\frac{1}{p} \leqslant \frac{4}{p^2} \frac{1}{n_1}-\frac{1}{p} \frac{1}{n_1+1}+\max \left\{\left(\frac{n_1}{2}-\frac{4}{p^2 n_1}\right) \exp \left(-\frac{p^2}{2} n_1\right), 0\right\} .
$$
\end{lemma}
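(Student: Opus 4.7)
The plan is to prove the two inequalities separately: the lower bound follows from convexity alone, while the upper bound requires a second-order Taylor expansion combined with a tail estimate. Throughout, the key structural fact is that $\pr(W=0) \le (1-p)^n$, which can be verified directly from the formula $\pr(W=0) = \binom{N-w}{n}/\binom{N}{n}$ via the telescoping comparison $\prod_{i=0}^{n-1}(N-w-i)/(N-i) \le (1-p)^n$.

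For the lower bound, I would condition on $\{W>0\}$ and apply Jensen's inequality to the convex map $x \mapsto n/x$. Since $\E[W \mid W>0] = np/\pr(W>0)$, this yields
$$\E[Y] = \pr(W>0)\cdot \E\!\left[\tfrac{n}{W} \;\Big|\; W>0\right] \ge \pr(W>0)\cdot \frac{n}{\E[W\mid W>0]} = \frac{\pr(W>0)^2}{p}.$$
Plugging in $\pr(W>0) \ge 1-(1-p)^n$ and expanding the square gives $\E[Y] \ge 1/p - 2(1-p)^n/p$, which is the desired lower bound.

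For the upper bound, I would decompose $\E[Y]$ according to the typicality event $A = \{W \ge np/2\}$. On $A$, expand $n/W$ to second order around $np$ via
$$\frac{n}{W} = \frac{1}{p} - \frac{W-np}{np^2} + \frac{(W-np)^2}{np^2 \,W},$$
and take expectations using $\E[W]=np$, $\var(W)\le np(1-p)$, and $W\ge np/2$ on $A$. The linear term contributes non-positively after observing that $\E[(W-np)\mathbf{1}_A] = -\E[(W-np)\mathbf{1}_{A^c}] \ge 0$, and the quadratic term is $O(1/(np^2))$. To land on the sharp coefficient $-1/(p(n+1))$, I expect to invoke an exact identity for $\E[1/(W+1)]$ (the hypergeometric analogue of the well-known binomial formula $\E[1/(W+1)] = (1-(1-p)^{n+1})/((n+1)p)$) rather than relying on a crude Taylor-remainder estimate. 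On $A^c \cap \{W>0\}$, bound $n/W \le n$ and apply Hoeffding's inequality for sampling without replacement to obtain $\pr(W<np/2) \le \exp(-p^2 n/2)$; this contributes at most $n\exp(-p^2 n/2)$, which after absorbing the positive variance contribution from $A$ becomes $\max\{(n/2 - 4/(p^2 n))\exp(-p^2 n/2),\, 0\}$.

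The main obstacle is the sharp constant matching: extracting the precise $-1/(p(n+1))$ term and ensuring that the $4/(p^2 n)$ from the variance contribution on $A$ cancels cleanly with a matching term inside the tail $\max$ requires an algebraic identity rather than a purely analytic estimate. Verifying that the hypergeometric concentration delivers exactly the Gaussian-type exponent $p^2 n/2$ (rather than something weaker such as $p^2 n/4$) is the other delicate piece, and points to using a Bernstein-flavored form of Hoeffding's inequality for sampling without replacement rather than a generic Chernoff bound.
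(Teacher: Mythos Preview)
The paper does not supply its own proof of this lemma: it is stated in the supplementary material as a direct citation (``Lemma 1 in \cite{miratrix2013}'') and is used only as a black box in the proofs of Corollaries~\ref{coro:post-sampling} and~\ref{coro:post}. There is therefore no proof in the paper to compare your proposal against.

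That said, your outline is broadly sound. The lower bound via Jensen conditioned on $\{W>0\}$ is clean and correct as written. For the upper bound, the decomposition into a typical region $\{W\ge np/2\}$ (Taylor expansion plus variance bound) and a tail region (Hoeffding for sampling without replacement, which indeed gives the exponent $p^2n/2$) is the standard route and matches the argument in \cite{miratrix2013}. You have correctly flagged the genuine difficulty: the specific constants $4/(p^2n)$, $-1/(p(n+1))$, and the precise form of the $\max\{\cdot,0\}$ term are not recoverable from a generic second-order remainder bound; they come from tracking the exact algebra of the decomposition and from the identity-level treatment of $\E[1/(W+1)]$ rather than from crude estimates. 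If you only need the lemma for the purposes of this paper---namely, that $\E[n_{[k]}/n_{[k]1}\mid\mathcal D_1]\to 1/p$---then your argument already delivers that without any of the constant-matching.
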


\begin{lemma}\label{lemma:weight S}For positive sequence $\{S_{[k]}\}_{k=1}^K$ and positive weights $\{w_{[k]}\}_{k=1}^K$ with $\sumk w_{[k]}=1$, we have
$$\left(\sumk w_{[k]} S_{[k]}\right)^2\ge \frac{\min_{1\le k\le K}S_{[k]}}{\max_{1\le k\le K}S_{[k]}}\sumk w_{[k]} S_{[k]}^2 .$$    
\end{lemma}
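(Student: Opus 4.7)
Let $m=\min_{1\le k\le K} S_{[k]}$ and $M=\max_{1\le k\le K} S_{[k]}$, so that $m\le S_{[k]}\le M$ for every $k$. The target inequality is equivalent, after multiplying by $M>0$, to
$$M\Bigl(\sumk w_{[k]} S_{[k]}\Bigr)^{2}\ \ge\ m\sumk w_{[k]} S_{[k]}^{2}.$$
My plan is to establish this by chaining two very elementary one-sided bounds obtained from the extreme values $m$ and $M$, without invoking any Cauchy--Schwarz or Kantorovich-type machinery.

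\textbf{Step 1 (lower bound from $m$).} Since the weights are nonnegative and sum to $1$, and each $S_{[k]}\ge m$, I have $\sumk w_{[k]} S_{[k]}\ge m\sumk w_{[k]}=m$.

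\textbf{Step 2 (upper bound from $M$).} From $S_{[k]}\le M$ I get $S_{[k]}^{2}=S_{[k]}\cdot S_{[k]}\le M\, S_{[k]}$ for every $k$; weighting by $w_{[k]}$ and summing yields
$$\sumk w_{[k]} S_{[k]}^{2}\ \le\ M\sumk w_{[k]} S_{[k]}.$$

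\textbf{Step 3 (chain the two bounds).} Multiplying Step 1 by $\sumk w_{[k]} S_{[k]}$ and then applying Step 2 in the form $\sumk w_{[k]} S_{[k]}\ge M^{-1}\sumk w_{[k]} S_{[k]}^{2}$, I obtain
$$\Bigl(\sumk w_{[k]} S_{[k]}\Bigr)^{2}\ \ge\ m\sumk w_{[k]} S_{[k]}\ \ge\ \frac{m}{M}\sumk w_{[k]} S_{[k]}^{2},$$
which is exactly the claimed inequality.

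\textbf{Difficulty assessment.} There is essentially no analytic obstacle; the result is a two-line consequence of the trivial bounds $S_{[k]}\ge m$ and $S_{[k]}\le M$. The only thing worth flagging is the equality case, which occurs precisely when $m=M$ (i.e., all $S_{[k]}$ coincide), consistent with the fact that the bound degrades gracefully with the spread of the $S_{[k]}$'s and is exactly what is needed in conditions such as \eqref{eq:p sampling} and \eqref{eq:p tau}, where heterogeneity across strata is controlled via the ratio $\min S_{[k]}/\max S_{[k]}$.
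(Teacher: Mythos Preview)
Your proof is correct and follows essentially the same approach as the paper: the paper's two displayed inequalities are exactly your Step~2 and Step~1 (in that order), and the chaining in your Step~3 is precisely how the paper combines them. Your write-up is simply a more explicit version of the same two-line argument.
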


\begin{lemma}\label{lemma:g function}
The function $g(x)=\Phi(x^{-1/2} \sigma t)$ has derivatives
\begin{eqnarray*}
g'(x)&=&-\frac 1 2  x^{-3/2} \phi\left(\frac{\sigma t}{\sqrt{ x}}\right)\sigma t,\\
g''(x)&=&\frac 3 4 \sigma t x^{-5/2}\phi\left(\frac{\sigma t}{\sqrt{x}} \right)-\frac 1 4 \sigma^3 t^3 x^{-7/2} \phi\left(\frac {\sigma t}{\sqrt{x}}\right).
\end{eqnarray*}
\end{lemma}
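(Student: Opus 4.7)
The plan is to carry out the computation by the chain rule and the identity $\phi'(v)=-v\phi(v)$ satisfied by the standard normal density. Write $u(x)=\sigma t\,x^{-1/2}$, so that $g(x)=\Phi(u(x))$ and
\[
u'(x)=-\tfrac{1}{2}\sigma t\,x^{-3/2}.
\]
The chain rule gives $g'(x)=\phi(u(x))\,u'(x)$, and substituting the expression for $u'(x)$ immediately yields the stated formula for $g'(x)$.

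For $g''(x)$, I would differentiate the product $g'(x)=-\tfrac{1}{2}\sigma t\,x^{-3/2}\phi(u(x))$ by the product rule. The factor $-\tfrac{1}{2}\sigma t\,x^{-3/2}$ differentiates to $\tfrac{3}{4}\sigma t\,x^{-5/2}$, producing the first term $\tfrac{3}{4}\sigma t\,x^{-5/2}\phi(u(x))$. The factor $\phi(u(x))$ differentiates, via the chain rule and the identity $\phi'(v)=-v\phi(v)$, to $-u(x)\phi(u(x))u'(x)$. Multiplying this by $-\tfrac{1}{2}\sigma t\,x^{-3/2}$ and then substituting $u(x)=\sigma t\,x^{-1/2}$ and $u'(x)=-\tfrac{1}{2}\sigma t\,x^{-3/2}$ gives $-\tfrac{1}{4}\sigma^{3} t^{3}\,x^{-7/2}\phi(u(x))$, yielding the second term.

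There is no real obstacle: the lemma is a routine calculus exercise, and the only mildly subtle point is to invoke $\phi'(v)=-v\phi(v)$ rather than carrying an explicit exponential through the computation. Once both derivatives are assembled in closed form, the identities in the statement follow directly.
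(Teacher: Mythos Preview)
Your proposal is correct; this is exactly the routine chain-rule/product-rule computation one would expect, and the paper in fact states this lemma without proof, treating it as an elementary calculus fact. There is nothing to add.
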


Recall the definition of $n_{[k]1}$ in Examples \ref{example:ps exp} and \ref{example:ps}. 
Define the variance of hypergeometric distribution variable $\sigma_{\mathrm{hyp}}^2=\var(n_{[k]1})= \nk(n-\nk)n_1(n-n_1)/n^3$.

\begin{lemma}[\citet{lahiri2007normal}]\label{lemma:hyper_beb}
Recall the $n_{[k]1}$ from Examples \ref{example:ps exp} and \ref{example:ps}.
    If $$ \frac{\sqrt{\nk(n-\nk)n_1(n-n_1)/n^3}}{10\max\{\frac{\min(n_1/n,1-n_1/n)+4}{4(1-\min(n_1/n,1-n_1/n))},2\}}>1,$$
then there exist universal constants $C_1,C_2$, such that 
    \begin{align*}
      &\sup_{t\in\mathbb R}\left|\pr\left(\frac{n_{[k]1}-n_1\pik}{\sigma_{\mathrm{hyp}}}\le t\right)-\Phi(t)\right|\\ 
      \le &\frac{C_1}{\sigma_{\mathrm{hyp}}}\frac{1+t^2}{(1-\pik) \bold 1(t\le 0)+\pik \bold 1(t\ge 0)}\exp(-C_2 t^2 \{(1-\pik) \bold 1(t\le 0)+\pik \bold 1(t\ge 0)\}^2).  
    \end{align*}
\end{lemma}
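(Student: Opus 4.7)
Since this statement is quoted verbatim from \citet{lahiri2007normal}, the argument is not original, and I only sketch the strategy one would follow. Represent $n_{[k]1}$ as the number of type-$k$ items drawn when $n_1$ items are sampled without replacement from a population of size $n$ containing $\nk$ of type $k$; its mean is $n_1\pik$ and variance is $\sigma_{\mathrm{hyp}}^2$. The centered hypergeometric $n_{[k]1}-n_1\pik$ is a sum of exchangeable $\{0,1\}$-valued variables, and hence fits the (non-stratified) linear permutation framework with $K=1$ and $A$ a $0/1$ block matrix with $\nk$ ones.

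The first step is to obtain a uniform Kolmogorov bound of order $1/\sigma_{\mathrm{hyp}}$. This can be done via Esseen's smoothing inequality applied to the characteristic function of the standardized hypergeometric statistic, performing a third-moment expansion to bound the low-frequency difference from the Gaussian characteristic function; or, alternatively, by invoking Theorem \ref{theorem:beb} in the non-stratified case, since the third-moment quantity $\sum_{i,j}|a_{ij}^\textsc{s}|^3/n$ reduces to an explicit function of $\pik$ and $\sigma_{\mathrm{hyp}}$ that gives the claimed order.

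The second step is to upgrade this uniform estimate to a non-uniform bound by splitting into the regimes $|t|\le c\,\sigma_{\mathrm{hyp}}$ and $|t|>c\,\sigma_{\mathrm{hyp}}$. In the bounded regime the uniform $O(1/\sigma_{\mathrm{hyp}})$ estimate together with the multiplier $1+t^2$ is already enough. In the tail regime, the dominant contribution is from the tails of both the hypergeometric and the Gaussian, which must be matched asymmetrically: an asymmetric Bennett--Bernstein inequality for sampling without replacement yields a right-tail decay of order $\exp(-c t^2 \pik^2)$ and a left-tail decay of order $\exp(-c t^2(1-\pik)^2)$, and the same skewness must be extracted from $\Phi(-|t|)$. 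Combining these produces precisely the factor $(1-\pik)\bold 1(t\le 0)+\pik \bold 1(t\ge 0)$ in the denominator and inside the exponential of the final bound.

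The hard part is maintaining the correct asymmetric exponential rate uniformly in $\pik\in(0,1)$ and in $t$, and tracking constants sharply enough that the precise validity condition displayed in the lemma, namely
$$
\frac{\sqrt{\nk(n-\nk)n_1(n-n_1)/n^3}}{10\max\bigl\{\tfrac{\min(n_1/n,\,1-n_1/n)+4}{4(1-\min(n_1/n,\,1-n_1/n))},\,2\bigr\}}>1,
$$
emerges as the threshold beyond which the Gaussian approximation is sharp. This threshold is the technical heart of \citet{lahiri2007normal}: it simultaneously controls the regime where the Bennett--Bernstein tail inequality is tight and the regime where the characteristic-function expansion is valid, and would be the main obstacle in a fully self-contained rederivation.
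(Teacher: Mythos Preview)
The paper does not give a proof of this lemma at all: it is stated as a quotation from \citet{lahiri2007normal} and used as a black box in the proof of Lemma~\ref{lemma:hyper}. So there is no ``paper's own proof'' to compare against, and you correctly recognize this in your first sentence.

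Your sketch of how one would prove such a non-uniform hypergeometric Berry--Esseen bound is a reasonable high-level outline (uniform bound via characteristic functions or a permutation BEB, then upgrade to non-uniform via an asymmetric Bennett--Bernstein tail bound), but since the paper offers nothing to compare it to, there is no meaningful discrepancy to flag. If anything, your proposal goes beyond what the paper does by attempting to explain the origin of the displayed threshold condition, whereas the paper simply invokes the cited result.
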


\begin{lemma}\label{lemma:hyper}
Recall $n_{[k]1}$ from Examples \ref{example:ps exp} and \ref{example:ps}.
When $ \pik\rightarrow \pik^*\in(0,1)$ and $n_1/n\rightarrow p\in(0,1)$ as $n\rightarrow \infty$, we have
$${\nk}^{1/2}\left(\frac{\nk}{n_{[k]1}+\bold 1(n_{[k]1}=0)}-\frac 1 p\right)\xrightarrow{d}\mathcal N\left(0,\frac{(1-\pik^*)(1-p)}{p^3}\right).$$ 
  Furthermore,
    $$\var\left(\frac{\nk}{n_{[k]1}} \mid \mathcal D_1\right)=O\left(\frac 1 n\right).$$
\end{lemma}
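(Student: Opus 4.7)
The plan is to first derive the CLT from the Berry--Esseen bound for the hypergeometric distribution (Lemma \ref{lemma:hyper_beb}) via the delta method, and then upgrade the weak-convergence statement into a sharp second-moment bound through a tail-event decomposition. Recall that $n_{[k]1}$ is hypergeometric with mean $n_1 \pik$ and variance $\sigma_{\mathrm{hyp}}^2 = \nk(n-\nk)n_1(n-n_1)/n^3$. Under the scaling $\pik \to \pik^* \in (0,1)$ and $n_1/n \to p \in (0,1)$, one has $\sigma_{\mathrm{hyp}}^2/\nk \to (1-\pik^*)p(1-p) > 0$, so the hypothesis of Lemma \ref{lemma:hyper_beb} is satisfied for all sufficiently large $n$, producing $\sqrt{\nk}\bigl(n_{[k]1}/\nk - n_1/n\bigr) \xrightarrow{d} \mathcal{N}(0,(1-\pik^*)p(1-p))$.

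For the CLT in the lemma, I apply the delta method to $g(x) = 1/x$ at $x = p$, for which $g'(p) = -1/p^2$. Since $\pr(n_{[k]1} = 0)$ decays exponentially by Hoeffding's inequality for sampling without replacement, the correction $\mathbf{1}(n_{[k]1}=0)$ is negligible on the $\sqrt{\nk}$ scale, so the delta method yields $\sqrt{\nk}\bigl(\nk/(n_{[k]1}+\mathbf{1}(n_{[k]1}=0)) - n/n_1\bigr) \xrightarrow{d} \mathcal{N}(0,(g'(p))^2(1-\pik^*)p(1-p)) = \mathcal{N}(0,(1-\pik^*)(1-p)/p^3)$. A Slutsky-type step replaces the centering $n/n_1$ by $1/p$, which is permissible provided $\sqrt{\nk}|n/n_1 - 1/p| \to 0$ (standard when $n_1 = \lfloor np \rfloor$ or similar).

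For the $O(1/n)$ variance bound, I exploit the exact identity $\nk/n_{[k]1} - n/n_1 = -(n/n_1)(n_{[k]1}-\mu)/n_{[k]1}$ with $\mu = \E n_{[k]1} = \nk n_1/n$, and bound $\var(\nk/n_{[k]1}\mid \mathcal{D}_1) \le \E[(\nk/n_{[k]1} - n/n_1)^2\mid \mathcal{D}_1]$. I decompose the probability along the typical event $\mathcal{G} = \{|n_{[k]1} - \mu| \le \mu/2\}$. On $\mathcal{G}$, $n_{[k]1} \ge \mu/2 \asymp \nk p$, so $(\nk/n_{[k]1} - n/n_1)^2 \le C (n_{[k]1} - \mu)^2/\nk^2$; taking expectations gives $\E[(n_{[k]1}-\mu)^2] = \sigma_{\mathrm{hyp}}^2 = O(\nk)$, contributing $O(1/\nk) = O(1/n)$. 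On $\mathcal{G}^c$, the crude bound $\nk/n_{[k]1} \le \nk$ on $\mathcal{D}_1$ gives an integrand of size $O(n^2)$, while Hoeffding's inequality for hypergeometric sampling yields $\pr(\mathcal{G}^c) \le 2 e^{-c n}$, so this contribution is $O(n^2 e^{-c n}) = o(1/n)$. Dividing by $\pr(\mathcal{D}_1) \to 1$ completes the argument.

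The main obstacle is the second part: converting the CLT directly into a second-moment bound fails because $\nk/n_{[k]1}$ is not bounded (it grows linearly in $\nk$ when $n_{[k]1}=1$), so uniform integrability of $\nk(\nk/n_{[k]1} - 1/p)^2$ is not automatic. The decomposition strategy above sidesteps this: the small-$n_{[k]1}$ regime is controlled by an exponential tail bound, while the Taylor-expansion argument is only invoked where $n_{[k]1}$ is comparable to $\nk p$, producing the sharp $1/n$ rate predicted by the CLT.
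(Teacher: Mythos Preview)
Your argument is correct. For the CLT, you and the paper proceed identically: invoke the hypergeometric Berry--Esseen bound (Lemma~\ref{lemma:hyper_beb}), absorb the indicator correction since $\pr(n_{[k]1}=0)$ vanishes exponentially, and apply the delta method with $g(x)=1/x$. You are in fact more careful than the paper in flagging the Slutsky step needed to swap the centering $n/n_1$ for $1/p$; the paper performs this swap silently.

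For the variance bound your route differs from the paper's. The paper proves uniform integrability of $n\bigl(\nk/(n_{[k]1}+\mathbf{1}(n_{[k]1}=0))-1/p\bigr)^2$ by a combinatorial tail estimate on $\pr(n_{[k]1}<\nk/b)$, and then combines UI with the CLT from part one (via Vitali) to conclude that the rescaled second moment converges, hence is $O(1)$. You instead bound the second moment directly: on the typical set $\mathcal G=\{|n_{[k]1}-\mu|\le\mu/2\}$ a one-term Taylor expansion reduces matters to $\sigma_{\mathrm{hyp}}^2/\nk^2=O(1/\nk)$, and on $\mathcal G^c$ Hoeffding's inequality for sampling without replacement kills the crude $O(n^2)$ bound exponentially. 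Your approach is more elementary in that it decouples the two parts of the lemma (your variance bound does not rely on the CLT) and avoids the Vitali-type argument altogether; the paper's approach is slightly more conceptual but makes part two depend on part one. Both routes ultimately rest on the same ingredient---an exponential tail bound for the hypergeometric---so neither is materially stronger.
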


\begin{lemma}[Rosenthal inequality \citep{rosenthal1970subspaces}]\label{lemma:rosenthal}
    Given independent, mean-zero random variables $\{X_i\}_{i=1}^n$, for any $p\ge 2$ there exists a constant $C_p$ depending only on $p$ such that
    $$\E\left[\left|\sum_{i=1}^n X_i\right|^p\right]\le C_p \left\{\sum_{i=1}^n \E|X_i|^p +\left(\sum_{i=1}^n \E X_i^2\right)^{p/2}\right\}.$$
\end{lemma}
\begin{lemma}\label{lemma:converge to zero with p lar 2}
Let $X_{ij},i,j\in \Ik,k=1,\ldots,K$ be IID, mean-zero random variables with $\E|X_{ij}|^p<\infty$ for some constant $p>2$. As $n\rightarrow\infty$, we have $$\frac 1 n\sumk\sumijk \frac{X_{ij}}{\nk}\xrightarrow{a.s.}0.$$
In particular, the conclusion holds even when the number of strata $K=K(n)$ diverges with $n$.
\end{lemma}

\section{Proofs of the results in the main paper}\label{sec:proof-main}

\subsection{Proof of Proposition~\ref{prop:var}}
\begin{proof}
We start by calculating the expected value of $W_{A,\pi}$:
$$\E \left[\sumk \sumik a_{i\pi(i)}\right]= \sumk \E\left[\sumik a_{i\pi(i)}\right]=\sumk \nk^{-1} \sumijk a_{ij}.$$
Then we can compute the variance of $W_{A,\pi}$:
\begin{align}\label{eq:var W_A}
    \var\left(\sumk \sumik a_{i\pi(i)}\right)=&\var\left(\sumk \sumik a_{i\pi(i)}^0\right)\notag\\
    =&\sumk \left(\sumik \var (a_{i\pi(i)}^0)+\sum_{i,j\in\Ik ,i\neq j}\cov (a_{i\pi(i)}^0,a_{j,\pi(j)}^0)\right).
    \end{align}
For the first term in \eqref{eq:var W_A}, we have:
$$\sumik \var (a_{i\pi(i)}^0)=\sumik\nk^{-1} \sumjk(a_{ij}^0)^2=\nk^{-1}\sumijk (a_{ij}-\bar a_{[k]  i\cdot}-\bar a_{[k]  \cdot j}+\bar a_{[k]  \cdot\cdot})^2.$$
For the second term in \eqref{eq:var W_A}, we have:
\begin{align*}
    \sum_{i,j\in\Ik ,i\neq j}\cov (a_{i\pi(i)}^0,a_{j,\pi(j)}^0)=&\sum_{i,j\in\Ik ,i\neq j}\E [a_{i\pi(i)}^0a_{j,\pi(j)}^0]\\
    =&\sum_{i,j\in\Ik ,i\neq j} \frac{1}{(\nk-1)\nk}\sum_{s,t\in\Ik ,s\neq t}a_{is}^0a_{jt}^0\\
    =&\frac{1}{(\nk-1)\nk}\sum_{i,s\in\Ik }a_{is}^0\sum_{j\in \Ik ,j\neq i}\sum_{t\in\Ik ,t\neq s}a_{jt}^0\\
    =&\frac{1}{(\nk-1)\nk}\sum_{i,s\in\Ik }a_{is}^0\sum_{j\in \Ik ,j\neq i}-a_{js}^0\\
    =&\frac{1}{(\nk-1)\nk}\sum_{i,s\in\Ik }(a_{is}^0)^2\\
    =&\frac{1}{(\nk-1)\nk}\sumijk(a_{ij}-\bar a_{[k]  i\cdot}-\bar a_{[k]  \cdot j}+\bar a_{[k]  \cdot\cdot})^2.
\end{align*}
Therefore, \eqref{eq:var W_A} simplifies to:
$$ \var\left(\sumk \sumik a_{i\pi(i)}\right)
    =\sumk (\nk-1)^{-1}\sumijk (a_{ij}-\bar a_{[k]  i\cdot}-\bar a_{[k]  \cdot j}+\bar a_{[k]  \cdot\cdot})^2.$$
  By simple algebra, we can verify that 
    $$\sumijk (a_{ij}-\bar a_{[k]  i\cdot}-\bar a_{[k]  \cdot j}+\bar a_{[k]  \cdot\cdot})^2=\sumijk(a_{ij}^2-\bar a_{[k]  i\cdot}^2-\bar a_{[k]  \cdot j}^2+\bar a_{[k]  \cdot\cdot}^2).$$
    Then we finish the proof.
\end{proof}

\subsection{Comment on Proposition \ref{prop:smallstrata}}
 Let $B_n=\max_{1\le k\le K}\max_{i,j\in\Ik}|a_{ij}^\textup{s}|$. We have 
\begin{align*}
   \sumk \E[|W_{A^\textup{s},\pi[k]}|^3]\le \sumk \nk B_n\E[|W_{A^\textup{s},\pi[k]}|^2]
= \sumk \nk B_n R_{A[k]}^2
\le 2\sumk  B_n\sumijk (a_{ij}^\textup{s})^2,
\end{align*}
where the last inequality follows from $R_{A[k]}^2=(\nk-1)^{-1} \sumijk (a_{ij}^{\textup{s}})^2.$

From the power mean inequality,
$$\left(\frac 1{\nk^2}\sumijk (a_{ij}^\textup{s})^2\right)^{1/2}\le \left(\frac 1{\nk ^2}\sumijk |a_{ij}^\textup{s}|^3\right)^{1/3},$$
we can obtain
$$\sumijk (a_{ij}^\textup{s})^2\le \nk^{2/3}\left(\sumijk |a_{ij}^\textup{s}|^3\right)^{2/3}.$$
Therefore, we have 
    $$\sup_{t\in \mathbb R}\Big| \pr(W_{ A^\textup{s},\pi}\le t)-\Phi (t)\Big| \le 2 C  B_n \sumk \nk^{2/3}\left(\sumijk |a_{ij}^\textup{s}|^3\right)^{2/3}.$$ 
    
  The above upper bound is similar to the results of \cite{fang2015rates} and \citet[Theorem S2]{shi2022berry}, using the upper bound of all elements in the matrix $A^\textup{s}$.

\subsection{Proof of Corollary~\ref{coro:order smallstrata}}
\begin{proof}
   We have
    \begin{align*}
        \sumk \nk \sumijk |a_{ij}^\textup{s}|^3=\frac{\sumk \nk \sumijk |a_{ij}^\textup{0}|^3}{\sigma_A^3}
    = \frac{K^{-1}\sumk \nk \sumijk |a_{ij}^\textup{0}|^3}{K^{1/2} (K^{-1}\sumk \E[|W_{A^0,\pi[k]}|^2])^{3/2}}.
    \end{align*}
    Because $n_h$'s are uniformly bounded and \eqref{eq:moment g}, we have
    $$\E\left[\nk \sumijk |a_{ij}^\textup{0}|^3\right]=\sum_{h=1}^\infty w_h n_h \left\{\sum_{1\le i,j\le n_j}|g_{h,ij}^0|^3\right\}\asymp \sum_{h=1}^\infty w_h n_h^{-2} \left\{\sum_{1\le i,j\le n_j}|g_{h,ij}^0|^3\right\}.$$ 
   The law of large numbers implies
   \begin{align}\label{eq:order many small strata 1}
       K^{-1}\sumk \nk \sumijk |a_{ij}^\textup{0}|^3 \xrightarrow{a.s.} \sum_{h=1}^\infty w_{h} n_{h}\sum_{1\le i,j\le n_h} |g^0_{h,ij}|^3\asymp\sum_{h=1}^\infty w_{h} n_{h}^{-2}\sum_{1\le i,j\le n_h} |g^0_{h,ij}|^3.
   \end{align}
    By $\nk \sumijk |a_{ij}^\textup{0}|^3\ge \E [|W_{A^0,\pi[k]}|^3]\ge (\E[|W_{A^0,\pi[k]}|^2])^{3/2}$, we can use the law of large numbers to obtain
\begin{align}\label{eq:order many small strata 2}
        \frac 1 K\sumk \E [|W_{A^0,\pi[k]}|^2]\xrightarrow{a.s.} \sum_{h=1}^\infty w_{h} (n_{h}-1)^{-1}\sum_{1\le i,j\le n_h} (g^0_{h,ij})^2\asymp \sum_{h=1}^\infty w_{h} n_{h}^{-2}\sum_{1\le i,j\le n_h} (g^0_{h,ij})^2 .
    \end{align}
    The conclusion holds directly from \eqref{eq:order many small strata 1} and \eqref{eq:order many small strata 2}.
\end{proof}

\subsection{Proof of Proposition~\ref{prop:fewstrata}}
We review the following properties of the Wasserstein distance.
\begin{lemma}[Properties of Wasserstein distance]\label{lemma:Wasserstein}~~

    \textbf{(i) (relation with Kolmogorov distance, \citet[Proposition 1.2, part 2]{ross2011fundamentals})}
    If the random variable $Z$ has Lebesgue density bounded by $C_1$, then for any random variable $W$, we have 
    $$d_{\textup K}(W,Z)\le \sqrt{2C_1 d_{\textup W}(W,Z)}.$$
    
    \textbf{(ii) (sub-additivity and linearity, \cite{panaretos2019statistical})}
For independent $\{X_k\}_{k=1}^K$ and independent $\{Y_k\}_{k=1}^K$, we have
    $$d_{\textup W}\left(\sumk X_k,\sumk Y_k\right)\le \sumk d_{\textup W}( X_k, Y_k).$$
    For any real number $a$, we have 
$$d_{\textup W}(aX_1,aY_1)=|a|d_{\textup W}(X_1,Y_1).$$
    
   \textbf{(iii) ($L^1$ BEB \citet[Theorem 4.8]{chen2011normal})} For $\pi\sim \textup{Uniform}(\Pi_K)$, there exists a universal $C_2$ such that for all $k=1,\ldots,K$, we have
$$d_{\textup W}(W_{A^\textup{s},\pi[k]}/R_{A[k]},\mathcal N(0,1))\le C_2\frac{\sumijk |a_{ij}^\textup{s}|^3}{\nk R_{A[k]}^3}.$$
\end{lemma}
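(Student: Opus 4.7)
The statement to prove is a compilation of three known properties of the Wasserstein distance, so my plan is to sketch the proof strategy for each part separately, invoking the dual (Kantorovich) formulation
$$d_{\textup W}(X,Y) = \sup_{h\in \textup{Lip}_1} |\E h(X)-\E h(Y)|$$
where $\textup{Lip}_1$ denotes the set of $1$-Lipschitz functions. This formulation is the common workhorse for parts (i) and (ii).

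For part (i), the idea is a standard smoothing argument. Given $t$ and a small $\alpha>0$, I would build a $1/\alpha$-Lipschitz approximation $h_{t,\alpha}$ of the indicator $\bold 1(\cdot \le t)$ (linearly interpolating on $[t,t+\alpha]$ from $1$ to $0$). Then $\pr(W\le t)-\pr(Z\le t)$ splits into (a) an error $\E h_{t,\alpha}(W)-\E h_{t,\alpha}(Z)$, controlled by $\alpha^{-1} d_{\textup W}(W,Z)$, and (b) a smoothing error controlled by $\pr(Z\in[t,t+\alpha])\le C_1 \alpha$ using the bounded density assumption. Optimizing $\alpha = \sqrt{d_{\textup W}(W,Z)/C_1}$ yields the stated bound (up to the $\sqrt{2C_1}$ constant). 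The main obstacle here is merely to track constants to get the precise prefactor stated.

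For part (ii), subadditivity is a clean coupling argument: for each $k$ let $(X_k^*,Y_k^*)$ be an optimal coupling attaining $d_{\textup W}(X_k,Y_k) = \E|X_k^*-Y_k^*|$ (independently across $k$). Then $\sumk X_k^*$ and $\sumk Y_k^*$ are couplings of $\sumk X_k$ and $\sumk Y_k$ respectively (using independence of the marginals to match the joint distributions), giving
$$d_{\textup W}\!\left(\sumk X_k,\sumk Y_k\right)\le \E\left|\sumk (X_k^*-Y_k^*)\right|\le \sumk \E|X_k^*-Y_k^*| = \sumk d_{\textup W}(X_k,Y_k).$$
Linearity is immediate from the definition since scaling by $a$ scales $h\in\textup{Lip}_1$ by $|a|$.

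For part (iii), this is the classical Wasserstein BEB for a single (unstratified) linear permutation statistic, so I would apply Theorem \ref{thm:Wasserstein BEB} (which I proved earlier via zero-bias transformation) in the special case $K=1$ to the $k$-th stratum-specific statistic $W_{A^\textsc{s},\pi[k]}/R_{A[k]}$. The route is: construct a zero-bias transformation $W^*$ of the standardized stratum-$k$ statistic via the $K=1$ version of the Stein-pair and zero-bias machinery in Section \ref{subsec:pi dagger construct}, invoke Lemma \ref{lemma:W-distance zerobia} to reduce to bounding $\E|W^*-W|$, and use the $L_1$-bound analogous to Proposition \ref{prop:L1} to get the third-moment bound $\sumijk |a_{ij}^\textsc{s}|^3/(\nk R_{A[k]}^3)$. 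The main obstacle is keeping the normalizations consistent: the third-moment expression already incorporates the $R_{A[k]}^3$ denominator coming from re-standardizing the stratum-$k$ statistic to unit variance, and this bookkeeping must be done carefully when applying Theorem \ref{thm:Wasserstein BEB} at the single-stratum level.
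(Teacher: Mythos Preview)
Your proposal is correct. The paper itself does not prove Lemma~\ref{lemma:Wasserstein}; it is purely a review lemma collecting three cited external results (Ross~2011 for (i), Panaretos--Zemel~2019 for (ii), Chen--Goldstein--Shao~2011 Theorem~4.8 for (iii)), and your sketches for all three parts are the standard arguments behind those references.

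One minor remark on part~(iii): you propose deriving it from the paper's own Theorem~\ref{thm:Wasserstein BEB} specialized to $K=1$. This is logically sound (Theorem~\ref{thm:Wasserstein BEB}'s proof via Lemma~\ref{lemma:W-distance zerobia} and Proposition~\ref{prop:L1} does not depend on Lemma~\ref{lemma:Wasserstein}, so there is no circularity), but it is a detour: part~(iii) is nothing more than the classical non-stratified Wasserstein BEB of Goldstein and Chen--Goldstein--Shao, which the paper simply quotes. The paper's Theorem~\ref{thm:Wasserstein BEB} is the stratified \emph{generalization} of that result, so invoking it to recover the single-stratum case is using the new to re-prove the old. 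Your underlying route (zero-bias construction $\to$ $L_1$ bound $\to$ Wasserstein bound) is exactly the proof of Chen et al.'s Theorem~4.8, so you may as well cite that directly as the paper does.
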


\begin{proof} [Proof of Proposition \ref{prop:fewstrata}]
Assume independent and identically distributed random variables $\{N_k\}_{k=1}^K$ following standard normal distributions.
By definition and Lemma \ref{lemma:Wasserstein}(i), there exists a universal constant $C_1$, such that 
\begin{align}\label{proof:Wass-K-BEB P1}
    \sup_{t\in \mathbb R}|\pr (W_{A^\textup{s},\pi}\le t)-\Phi(t)|=d_{\textup K}( W_{A^\textup{s},\pi}, N_1)
     \le  \left\{2C_1 d_{\textup W}( W_{A^\textup{s},\pi}, N_1)\right\}^{1/2}.
\end{align}
By Lemma \ref{lemma:Wasserstein}(ii),
     \begin{align}\label{proof:Wass-K-BEB P2}
       \left\{2C_1 d_{\textup W}( W_{A^\textup{s},\pi}, N_1)\right\}^{1/2}=&\left(2C_1 d_{\textup W}\left(\sumk  W_{A^\textup{s},\pi[k]}, \sumk R_{A[k]}N_k\right)\right)^{1/2}\notag\\
     \le & \left(2C_1 \sumk d_{\textup W}(  W_{A^\textup{s},\pi[k]}, R_{A[k]}N_k)\right)^{1/2}.
     \end{align}

     Applying Lemma \ref{lemma:Wasserstein}(iii), we have 
 \begin{align}\label{proof:Wass-K-BEB P3}
 \left(2C_1 \sumk R_{A[k]} d_{\textup W}(  W_{A^\textup{s},\pi[k]}/R_{A[k]}, N_k)\right)^{1/2}
     \le & \left(2C_1 \sumk R_{A[k]} C_2 \frac{\sumijk |a_{ij}^\textup{s}|^3}{\nk R_{A[k]}^3}\right)^{1/2}\notag\\
     = & C\left( \sumk  \frac{\sumijk |a_{ij}^\textup{s}|^3}{\nk R_{A[k]}^2}\right)^{1/2},
 \end{align}
 where $C=(2C_1 C_2)^{1/2}$.
Combining \eqref{proof:Wass-K-BEB P1}, \eqref{proof:Wass-K-BEB P2} and \eqref{proof:Wass-K-BEB P3}, we have
 $$\sup_{t\in \mathbb R}\Big| \pr(W_{ A^\textup{s},\pi}\le t)-\Phi (t)\Big| \le C\left\{\sumk \sumijk |a_{ij}^\textup{s}|^3/(\nk R_{A[k]}^2)\right\}^{1/2}.$$
\end{proof}

\subsection{Proof of Corollary~\ref{coro:order fewstrata}}
\begin{proof}
From the condition $R_{A[k]}^2\in [\underline c/K,\overline c/K]$, we have
$$\frac K {\overline c}  \sumk \sumijk|a_{ij}^\textup{s}|^3/\nk \le \sumk \sumijk|a_{ij}^\textup{s}|^3/(\nk R_{A[k]}^2)\le \frac K {\underline c}  \sumk \sumijk|a_{ij}^\textup{s}|^3/\nk.$$
Therefore, we only need to consider 
\begin{align}\label{proof-coro eq}
    \sumk \sumijk|a_{ij}^\textup{s}|^3/\nk=\frac{\frac{1}{n} \sumk \nk \sumijk |a_{ij}^0|^3/\nk^2}{ n^{1/2} (\frac{1}{n} \sumk  \sumijk |a_{ij}^0|^2/(\nk-1))^{3/2}}.
\end{align}
\subsubsection{The numerator of \eqref{proof-coro eq}}
Let $X_{ij}^{(3)}=|a_{ij}^0|^3-\E[|Y|^3]$. We have $\E [X_{ij}^{(3)}]=0$. By the relationship between the centered moment and the original moment, we have $\E[|X_{ij}^{(3)}|^{2+\varepsilon'}]\le 2^{2+\varepsilon'} \E[|Y|^{3(2+\varepsilon')}]<\infty$ for $\varepsilon'=\varepsilon/3.$

For the numerator of \eqref{proof-coro eq}, by Lemma \ref{lemma:converge to zero with p lar 2}, we have
\begin{align}\label{equ:a^3}
        \frac{1}{n} \sumk \nk \sumijk |a_{ij}^0|^3/\nk^2=\E[ |Y|^3]+\frac 1n \sumk\sumijk \frac {X_{ij}^{(3)}} {\nk} \xrightarrow{a.s.}\E[ |Y|^3].
    \end{align}

\subsubsection{The denominator of \eqref{proof-coro eq}}
Let $X_{ij}^{(2)}=(\nk-1)^{-1}\nk(|a_{ij}^0|^2-\E[|Y|^2])$, we have $\E[X_{ij}^{(2)}]=0$ and similarly $\E[|X_{ij}^{(2)}|^{3}]\le 2^3\E[|Y|^6]<\infty.$

    For the denominator of \eqref{proof-coro eq}, 
    $$\frac{1}{n} \sumk  \sumijk |a_{ij}^0|^2/(\nk-1)=\left(\sumk \frac {\nk^2} {n(\nk-1)}\right) \E [|Y|^2]+\frac{1}{n}\sumk\sumijk \frac{X_{ij}^{(2)}}{\nk}.$$
    By Lemma \ref{lemma:converge to zero with p lar 2}, the second term tends to zero almost surely.
 For all $\nk\ge 2,$ because
$$1\le \sumk \frac {\nk^2} {n(\nk-1)} \le 2,$$
we have 
$$\sumk \sumijk|a_{ij}^\textup{s}|^3/\nk \asymp  \frac{\E [|Y|^3]}{n^{1/2} (\E [|Y|^2])^{3/2}}\quad \text{a.s.}.$$
Therefore, we obtain the result.
    
\end{proof}

\subsection{Comment on the Combination of Propositions \ref{prop:smallstrata} and \ref{prop:fewstrata}}\label{subsec:combination of proposition 23}
In this section, we combine Propositions \ref{prop:smallstrata} and \ref{prop:fewstrata} to obtain a $L^1$ BEB using the sub-additivity of the Wasserstein distance. We define strata as ``small'' when their size is below a threshold sequence $c_{n}$, and ``large'' otherwise: 
$$
\mathcal{S}\left(c_{n}\right)=\left\{k: n_{[k]}<c_{n}\right\}, \quad \mathcal{L}\left(c_{n}\right)=\left\{k: n_{[k]} \geq c_{n}\right\}.
$$
Define $W_{S}=\sum_{k\in \mathcal S(c_n)} a_{i,\pi(i)}^\textup s$ with $R_{S}^2=\sum_{k\in \mathcal S(c_n)} R_{A[k]}^2$ and $W_{L}=\sum_{k\in \mathcal L(c_n)} a_{i,\pi(i)}^\textup s$ with $R_{L}^2=\sum_{k\in \mathcal L(c_n)} R_{A[k]}^2$. For IID standard normal random variables $N,N_1,N_2$, we have
\begin{align}\label{eq:d W}
    d_{\textup{W}}(W_{A^\textup{s}},N)=&d_{\textup{W}}(W_S+W_L,R_S N_1+R_L N_2)\notag\\
    \le& R_S d_{\textup{W}}(W_S/R_S,N_1)+R_L d_{\textup{W}}(W_L/R_L,N_2).
\end{align}
By \citet[Corollary 4.2]{chen2011normal} and similar to Proposition \ref{prop:smallstrata}, we can obtain 
\begin{align}\label{eq:d W small}
    d_{\textup{W}}(W_S/R_S,N_2)\le C\sum_{k\in \mathcal S(c_n)}\nk \sumijk |a_{ij}^\textup s|^3/R_S^3.
\end{align}

By the proof of Proposition \ref{prop:fewstrata}, we have 
\begin{align}\label{eq:d W large}
d_{\textup{W}}(W_L/R_L,N_1)\le C\sum_{k\in \mathcal L(c_n)} \sumijk |a_{ij}^\textup s/R_L|^3/\{\nk (R_{A[k]}/R_L)^2\}.
\end{align}
Combining \eqref{eq:d W}, \eqref{eq:d W small} and \eqref{eq:d W large}, we have
\begin{align*}
    d_{\textup{W}}(W_{A^\textup{s}},N)\le& C\sum_{k\in \mathcal S(c_n)}\nk \sumijk |a_{ij}^\textup s|^3/R_S^2 + C\sum_{k\in \mathcal L(c_n)} \sumijk |a_{ij}^\textup s|^3/(\nk R_{A[k]}^2)\\
    \le & C \frac{c_n^2}{R_S^2}\sum_{k\in \mathcal S(c_n)} \sumijk |a_{ij}^\textup s|^3/\nk+ C\sum_{k\in \mathcal L(c_n)} \sumijk |a_{ij}^\textup s|^3/(\nk R_{A[k]}^2).
\end{align*}
To recover a Wasserstein-type bound analogous to Theorem \ref{thm:Wasserstein BEB}, one must assume $R_{A[k]}^2,R_S^2\ge C_1$ and $c_n\le C_2$ for some constants $C_1, C_2>0$. Thus, a direct combination of Propositions \ref{prop:smallstrata} and \ref{prop:fewstrata} only works under such constraints, whereas our approach establishes a uniform bound across all regimes without additional assumptions.

Furthermore, because the sub-additivity does not hold for the Kolmogorov distance, the resulting BEB is suboptimal based on the direct combination of Propositions \ref{prop:smallstrata} and \ref{prop:fewstrata}.

\subsection{Proof of Theorem \ref{theorem:beb}}\label{proof:first bound beb}
\subsubsection{Simplification to Theorem \ref{theorem:beb mn1} and Simplification of Variance Structure}\label{subsub:simplif epsi0}
We can assume $\min_{1\le k\le K}\nk\ge 2$ without changing the statement of Theorem \ref{theorem:beb} because $W_{A,\pi[k]}$ is constant when $\nk\in \{0,1\}$.
Recall that for a matrix $A=[a_{ij}]$, let $A'=[a_{ij}']$, where $a_{ij}'=a_{ij}^\textup{s}\cdot \bold{1}(|a_{ij}^\textup{s}|\le 1/2)$. Recall the definition of $\beta_{A[k]}$, $M_n^\textup{s}$ and $M_n^1$ in Section \ref{sec:proof sketch}.
For any given constant $\varepsilon_0>0$, if $\sum_{k=1}^K{\beta_{A[k]  }}/{\nk}\ge\varepsilon_0$, then 
\begin{align}\label{eq:thm1,first:1}
    \sup_{t\in \mathbb R}|\pr(W_{A^\textup{s},\pi}\le t)-\Phi(t)|\le 2\le \frac{2}{\sqrt{\varepsilon_0}}\left( \sum_{k=1}^K\frac{\beta_{A[k]  }}{\nk}\right)^{1/2}.
\end{align}
Therefore, we focus on the case with $\sum_{k=1}^K {\beta_{A[k]  }}/{\nk}\le \varepsilon_0$.

From Lemma \ref{lemma:scaling}, we have
\begin{align}\label{eq:thm1,first:2}
\sup_{t\in\mathbb R}|\pr(W_{A^\textup{s},\pi}\le t)-\Phi(t)|\le \sup_{t\in \mathbb R}|\pr(W_{(A')^\textup{s},\pi}\le t)-\Phi(t)|+c_3\sumk \frac{\beta_{A[k]  }}{\nk},
\end{align}
and by Lemma \ref{lemma:truncation}, we have
\begin{align}\label{eq:beta A'[k] le beta A[k]}
    \frac{\beta_{(A')^\textup{s}[k]  }}{\nk}=\frac{\beta_{A'[k]  }}{\nk}\le c_2\frac{\beta_{A[k]  }}{\nk},
\end{align}
and also that $(A')^\textup{s}\in M_n^1$. Hence, it suffices to prove that there exists a constant $c'$ such that
$$\sup_{t\in \mathbb R}|\pr(W_{(A')^\textup{s},\pi}\le t)-\Phi(t)|\le c' \sumk \frac{\beta_{(A')^\textup{s}[k]  }}{\nk}.$$

\subsubsection{Zero-Bias Transformation}
Recall $h_{t,\alpha}$ in Section \ref{sec:proof sketch} and
\begin{align}\label{eq:first main theorem}
    \E [h_{t-\alpha,\alpha}(W_{(A')^\textup{s},\pi} )]-\Phi(t) \le \pr(W_{(A')^\textup{s},\pi} \le t)-\Phi(t)\le \E [h_{t,\alpha}(W_{(A')^\textup{s},\pi} )]-\Phi(t).
\end{align}
For $N\sim\mathcal N(0,1)$, because 
$$|\E [h_{t-\alpha,\alpha}(N )]-\Phi(t)|=\E[ h_{t,0}(N)-h_{t-\alpha,\alpha}(N)]\le \pr(N\in [t-\alpha,t])\le \frac{\alpha}{\sqrt{2\pi}},$$
the left-hand side of \eqref{eq:first main theorem} is bounded by
\begin{align}\label{eq: left h t-alph}
   |\E [h_{t-\alpha,\alpha}(W_{(A')^\textup{s},\pi} )]-\Phi(t)|=  &|\E [h_{t-\alpha,\alpha}(W_{(A')^\textup{s},\pi} )]-\E [h_{t-\alpha,\alpha}(N )]+\E [h_{t-\alpha,\alpha}(N )]-\Phi(t)|\notag\\
    \le & |\E [h_{t-\alpha,\alpha}(W_{(A')^\textup{s},\pi} )]-\E [h_{t-\alpha,\alpha}(N )]|+\frac{\alpha}{\sqrt{2\pi}}.
\end{align}
Because 
$$|\E [h_{t,\alpha}(N )]-\Phi(t)|=\E[ h_{t,\alpha}(N)-h_{t,0}(N)]\le \pr(N\in [t,t+\alpha])\le \frac{\alpha}{\sqrt{2\pi}},$$
the right-hand side of \eqref{eq:first main theorem} is bounded by
\begin{align}\label{eq: right h t alph}
    |\E [h_{t,\alpha}(W_{(A')^\textup{s},\pi} )]-\Phi(t)|
    =& |\E [h_{t,\alpha}(W_{(A')^\textup{s},\pi} )]-\E [h_{t,\alpha}(N )]+\E [h_{t,\alpha}(N )]-\Phi(t)|\notag\\
    \le & |\E [h_{t,\alpha}(W_{(A')^\textup{s},\pi} )]-\E [h_{t,\alpha}(N )]|+\frac{\alpha}{\sqrt{2\pi}}.
\end{align}
Combine \eqref{eq: left h t-alph} and \eqref{eq: right h t alph} to obtain
$$\sup_{t\in \mathbb R}|\pr(W_{(A')^\textup{s},\pi}\le t)-\Phi(t)|\le \sup_{t\in \mathbb R}|\E [h_{t,\alpha}(W_{(A')^\textup{s},\pi})]-\E [h_{t,\alpha}(N)]|+\frac{\alpha}{\sqrt{2\pi}}.$$

Recall the definition of $f_{t,\alpha}$ in Section \ref{sec:proof sketch}. From Lemma \ref{thm:zerobias}, we have
\begin{align*}
    \sup_{t\in \mathbb R}|\pr(W_{(A')^\textup{s},\pi}\le t)-\Phi(t)|\le& \sup_{t\in \mathbb R}|\E [f'_{t,\alpha}(W_{(A')^\textup{s},\pi})]-\E [W_{(A')^\textup{s},\pi} f_{t,\alpha}(W_{(A')^\textup{s},\pi})]|+\frac{\alpha}{\sqrt{2\pi}}\\
   \overset{\eqref{eq:zb form}}{=}& \sup_{t\in \mathbb R}|\E [f'_{t,\alpha}(W_{(A')^\textup{s},\pi})]-\E [f'_{t,\alpha}(W_{(A')^\textup{s},\pi}^*)]|+\frac{\alpha}{\sqrt{2\pi}}.
\end{align*}

Recall \eqref{eq:total decomposition}, we only need to bound $\mathcal A_1,\mathcal A_2$ and $\mathcal A_3$.

\subsubsection{Bounds for $\mathcal A_1$ and $\mathcal A_2$}\label{subsubsec:bound A_1-3}

From Proposition \ref{prop:L1}, we bound $\mathcal A_1$ as
$$\mathcal A_1= \E[|W_{(A')^\textup{s},\pi}^*-W_{(A')^\textup{s},\pi}|]\le 80\sumk {\beta_{A'[k]  }}/{\nk}\le 80 c_2 \sumk {\beta_{A[k]  }}/{\nk}.$$

Now, we bound $\mathcal A_2$.
Recall $\mathcal R, S, T$ defined in \eqref{eq:define R S T}. Define 
$$T^\dagger=\sum_{i\in \mathcal R} (a')^\textup{s}_{i\pi^\dagger(i)},\quad T^\ddagger=\sum_{i\in \mathcal R} (a')^\textup{s}_{i\pi^\ddagger(i)}.$$
 From Lemma \ref{thm:zerobias} and Proposition \ref{prop:permutation property}, we have that
\begin{align}\label{eq:diff w* w}
    W_{(A')^\textup{s},\pi}^*-W_{(A')^\textup{s},\pi}=&(UW_{(A')^\textup{s},\pi^\dagger}+(1-U)W_{(A')^\textup{s},\pi^\ddagger})-W_{(A')^\textup{s},\pi}\notag\\
    =& (U(S+T^\dagger)+(1-U)(S+T^\ddagger))-(S+T)\notag\\
    =&UT^\dagger+(1-U)T^\ddagger-T.
\end{align}
Recall $\textbf{I}=(B^\dagger,I^\dagger,J^\dagger,\pi^{-1}(P^\dagger),\pi^{-1}(Q^\dagger),\pi(I^\dagger),\pi(J^\dagger),P^\dagger,Q^\dagger)$. By \eqref{eq:diff w* w}, $W_{(A')^\textup{s},\pi}^*-W_{(A')^\textup{s},\pi}$ is measurable with respect to $\mathcal J=\{\textbf{I},U\}$. Furthermore, because of Lemma \ref{lemma:truncation}, we have $(A')^\textup{s}\in M_n^1$ and $|\mathcal R|\le 4$, and further
$$|W_{(A')^\textup{s},\pi}|=|S+T|\le |S|+\sum_{i\in \mathcal R}|(a')^\textup{s}_{i\pi(i)}|\le |S|+4.$$ Therefore, we have that
\begin{align}\label{eq:A2 bound by 4+var S L1}
	\mathcal A_2=&\E [|W_{(A')^\textup{s},\pi}(W_{(A')^\textup{s},\pi}^*-W_{(A')^\textup{s},\pi})|]\notag\\
=&\E [|W_{(A')^\textup{s},\pi}^*-W_{(A')^\textup{s},\pi} |E(|W_{(A')^\textup{s},\pi}|\mid \mathcal J)]\notag\\
\le & \E [|W_{(A')^\textup{s},\pi}^*-W_{(A')^\textup{s},\pi}|\E (4+|S|\mid \mathbf{I})]\notag\\
\le & \E \left[|W_{(A')^\textup{s},\pi}^*-W_{(A')^\textup{s},\pi}| (4+\sqrt{\E [S^2\mid \mathbf{I}]})\right].
\end{align}
Let $l=|\mathcal R|\in \{2,3,4\}$. Since $S=\sum_{i\not\in \mathcal R}(a')^\textup{s}_{i\pi(i)}$ and given the definition of $\pi$, we have that $S\mid \mathbf{I}$ and $W_{D,\phi}$ have the same distribution,
where $D$ is the $(n-l)\times (n-l)$ array formed by removing from $(A')^\textup{s}$ the rows $\mathcal R=\{I^\dagger,J^\dagger,\pi^{-1}(P^\dagger ) ,\pi^{-1}(Q^\dagger )\}$ and columns $\pi(\mathcal R)=\{\pi(I^\dagger) ,\pi(J^\dagger),P^\dagger,Q^\dagger\}$, and $\phi\sim \textup{Uniform}(\Pi_K)$ over the stratified structure $\{\Ik \setminus\mathcal R\}_{k=1}^K$.

Lemma \ref{lemma:inductive} yields $|\mu_D|\le 8$ and $\sigma_D^2\le c_4$, so that
$$\E [W_{D,\phi}^2]\le |\mu_D|^2+\sigma_D^2\le 64+c_4.$$
By \eqref{eq:A2 bound by 4+var S L1}, we have that
\begin{align}\label{eq:thm1,first:4}
    \mathcal A_2\le (\sqrt{64+c_4}+4) \E [|W_{(A')^\textup{s},\pi}^*-W_{(A')^\textup{s},\pi}|]\le (\sqrt{64+c_4}+4)80 c_2 {\sumk \frac{\beta_{A[k]  }}{\nk}}.
\end{align}

\subsubsection{Simplification of $\mathcal A_3$}\label{subsubsec:bound for A_3 new}
Recall the discussion in Section \ref{subsub:simplif epsi0} to let $\sumk \beta_{A[k]}/\nk\le \varepsilon_0$.
Recall $T^\dagger=\sum_{i\in \mathcal R} (a')^\textup{s}_{i\pi^\dagger(i)}$, $T^\ddagger=\sum_{i\in \mathcal R} (a')^\textup{s}_{i\pi^\ddagger(i)}$ and that $W_{(A')^\textup{s},\pi}^*-W_{(A')^\textup{s},\pi}$ is measurable with respect to $\mathcal J=\{\textbf{I},U\}$. Rewrite \eqref{eq:A3 keyquan} as follows:
\begin{align*}
    \mathcal A_3=&\frac{1}{\alpha} \E \left[|W_{(A')^\textup{s},\pi}^*-W_{(A')^\textup{s},\pi}|\cdot \E\left[\int_0^1 \bold{1}\{W_{(A')^\textup{s},\pi}+r(W_{(A')^\textup{s},\pi}^*-W_{(A')^\textup{s},\pi})\in [t,t+\alpha]\}\textup{d}r\mid \mathcal J \right] \right].
\end{align*}

For any $r\in \mathbb R$, we have 
\begin{align*}
W_{(A')^\textup{s},\pi}+r(W_{(A')^\textup{s},\pi}^*-W_{(A')^\textup{s},\pi})=& r W_{(A')^\textup{s},\pi}^*+(1-r) W_{(A')^\textup{s},\pi}\\
    =& r(S+UT^\dagger+(1-U)T^\ddagger)+(1-r)(S+T)\\
    =& S+g_r,
\end{align*}
where $g_r=rUT^\dagger+r(1-U)T^\ddagger+(1-r)T$.
Therefore, 
\begin{align}\label{eq:S in [t,t+alpha]}
    &\E \left[\int_0^1 \bold{1}\{W_{(A')^\textup{s},\pi}+r(W_{(A')^\textup{s},\pi}^*-W_{(A')^\textup{s},\pi})\in [t,t+\alpha]\}\textup{d}r\mid \mathcal J\right]\notag\\
    =& \int_0^1 \pr (S+g_r\in [t,t+\alpha]\mid \mathcal J)\textup{d}r\notag\\
    \le & \sup_{t\in\mathbb 
 R}\pr (S\in [t-g_r,t+\alpha-g_r]\mid \mathcal J)\notag\\
 =&\sup_{t\in \mathbb R}\pr(S\in[t,t+\alpha]\mid \textbf{I}).
\end{align}
Then \eqref{eq:S in [t,t+alpha]} implies
\begin{align*}
    \mathcal A_3 \le \frac{1}{\alpha} \E \left[|W_{(A')^\textup{s},\pi}^*-W_{(A')^\textup{s},\pi}|  \sup_{t\in \mathbb R}\pr (S\in [t,t+\alpha]\mid \textbf{I}) \right].
\end{align*}

\subsubsection{Upper bound on $\pr(W_{D,\phi}\in [t,t+\alpha ])$}\label{subsubsec:concentration lemma}
The goal of this subsection is to upper bound $\pr(S\in [t,t+\alpha]\mid \textbf{I})$, which will be stated in \eqref{eq:unified concentration}.

Given $\textbf{I}$ and $B^\dagger=b^\dagger$. To study the conditional distribution of $S$ given $\textbf{I}$, recall $\mathcal R$ defined in \eqref{eq:define R S T} and $l=|\mathcal R|$. We define a permutation $\phi$ given $\textbf{I}$, such that in stratum $b^\dagger$, $\phi$ is a uniform random permutation among $\mathcal I_{[b^\dagger]}\setminus\mathcal R$, and in other strata $k\neq b^\dagger$, $\phi$ is a uniform random permutation among $\Ik$. 
For convenience, we use $\Rb=\{(i,j):i\in \mathcal I_{[b^\dagger]}\setminus\mathcal R,j\in \mathcal I_{[b^\dagger]}\setminus\pi(\mathcal R)\}$. The following results are derived conditional on $\textbf{I}$.

When $n_{[b^\dagger]}-l\in \{0,1\}$, $W_{D,\phi[b^\dagger]}$ is constant and essentially $\phi\in \textup{Uniform}(\Pi_{K-1})$. When $n_{[b^\dagger]}-l\ge 2$, we have $\phi\in \textup{Uniform}(\Pi_{K})$. 
To apply Lemma \ref{lemma:concentration}, we define a new random variable $B_D$: 
$$\pr(B_D=k)=\begin{cases}
 \frac{(\nk-1)\boldsymbol{1}(k\neq b^\dagger)}{n-n_{[b^\dagger]}-(K-1)},& n_{[b^\dagger]}-l\in\{0,1\};\\
    \frac{\nk-l\cdot \boldsymbol 1(k=b^\dagger)-1}{n-l-K},& n_{[b^\dagger]}-l\ge 2.
\end{cases}$$
Conditional on $B_D=k$, we uniformly select two different indices $I_D,J_D$ from $\Ik\setminus\mathcal R.$
Let $$W_{D,\phi}'=W_{D,\phi}-(a')^\textup{s}_{I_D \phi(I_D)}-(a')^\textup{s}_{J_D \phi(J_D)}+(a')^\textup{s}_{I_D \phi(J_D)}+(a')^\textup{s}_{J_D \phi(I_D)}.$$

By Proposition \ref{prop:stein pair}, we have 
    $$\E[W_{D,\phi}'-W_{D,\phi}\mid W_{D,\phi}]=-\lambda W_{D,\phi}+L,$$
    where $\lambda=2/(n-l-K)$ and 
\begin{align*}
   L=\begin{cases}0,&n_{[b^\dagger]}-l\in\{0,1\};\\
       \frac{2}{(n_{[b^\dagger]}-l)(n-l-K)}\sum_{i\in \mathcal R,j\in \pi(\mathcal R)}(a')^\textup{s}_{ij},&n_{[b^\dagger]}-l\ge 2.
   \end{cases}
\end{align*}
Here we use the following fact to simplify $L$:
\begin{align}\label{eq:sum Db=sum r pi(r)}
    \sumRb (a')^\textup{s}_{ij}=\sumRb (a')^\textup{s}_{ij}+\sum_{i\in \mathcal R, j\in \mathcal I_{[b^\dagger]}}(a')^\textup{s}_{ij}+\sum_{i\in \mathcal I_{[b^\dagger]}, j\in \pi(\mathcal R)}(a')^\textup{s}_{ij}=\sum_{i\in \mathcal R,j\in \pi(\mathcal R)}(a')^\textup{s}_{ij}.
\end{align}

Given $\textbf{I}$, $L$ is a constant. By Lemma~\ref{lemma:concentration} and following the proof of \citet[Proposition~2.4]{chen2015error}, we proceed in four steps:

1. Bound 
$$\delta =\frac{\E[|W_{D,\phi}'-W_{D,\phi}|^3]}{\lambda},$$
and  $$B_0=\sqrt{\var\left(\E\left[\frac{1}{2\lambda}(W_{D,\phi}'-W_{D,\phi})^2 \bold{1}(|W_{D,\phi}'-W_{D,\phi}|\le \delta)\mid W_{D,\phi}\right]\right)}.$$

2. Bound $|L|/\lambda$.

3. Give a lower bound and an upper bound on $\E [W_{D,\phi}^2]$. 

4. Apply Lemma \ref{lemma:concentration}.

We consider the two cases $n_{[b^\dagger]}-l\in\{0,1\}$ and $n_{[b^\dagger]}-l\ge 2$ separately.
In each case, the above four steps are verified.

We next present a useful fact that will be repeatedly used in the proof.
By choosing a small constant $\varepsilon_0 \le 1/(7200^{3/2}c_2)$, we use \eqref{eq:beta A'[k] le beta A[k]} to obtain that for each $k=1,\ldots,K$,
\begin{align*}
 \frac{R_{A^{\prime}[k]}^{2}}{n_{[k]}}=\frac{1}{n_{[k]}\left(n_{[k]}-1\right)} \sum_{i, j \in \mathcal{I}_{[k]}}|(a')^\textup{s}_{ij}|^{2} \leq \frac{n_{[k]}^{1 / 3}}{n_{[k]}-1}\left(\frac{1}{n_{[k]}} \sum_{i, j \in \mathcal{I}_{[k]}}|(a')^\textup{s}_{ij}|^{3}\right)^{2 / 3}\le 2(c_2\varepsilon_0)^{2/3}. 
\end{align*}
Since $\varepsilon_0\le 1/(7200^{3/2}c_2)$, we have 
$$\max_{1 \le k\le K} R_{A'[k]}^2/\nk \le 1/3600.$$

\subsubsection{When $n_{[b^\dagger]}-l\in \{0, 1\}$}
When $n_{[b^\dagger]}-l=1$, $W_{D,\phi[b^\dagger]}$ is constant. By $\pr(W_{D,\phi}\in [t,t+\alpha])=\pr(\sum_{k\neq b^\dagger}W_{D,\phi[k]}\in [t-W_{D,\phi[b^\dagger]},t+\alpha-W_{D,\phi[b^\dagger]}])$, we can only consider the case with $n_{[b^\dagger]}-l=0$. 

\paragraph{Step 1. Bound $\delta$ and $B_0$}
Using Theorem \ref{thm:bound for delta and B02}(i), we have
$$\delta\le 32 \sum_{k\neq b^\dagger} \sumijk |(a')^\textup{s}_{ij}|^3/\nk \le 32 \sumk \sumijk |(a')^\textup{s}_{ij}|^3/\nk. $$
Using Theorem \ref{thm:bound for delta and B02}(ii), we have
$$B_0\le \sqrt{32\times 56} \sum_{k\neq b^\dagger}\sumijk |(a')^\textup{s}_{ij}|^3/\nk \le 16\sqrt{7}  \sumk \sumijk |(a')^\textup{s}_{ij}|^3/\nk. $$
\paragraph{Step 2. Bound $|L| / \lambda$}
We have $|L|=0$.
\paragraph{Step 3. Give a lower bound and an upper bound on $\E [W_{D,\phi}^2]$}
Since $\E [W_{D,\phi}]=0,$ we have 
\begin{align*}
    \E [W_{D,\phi}^2]=\var(W_{D,\phi})=\sum_{k\neq b^\dagger} R_{A'[k]}^2 =1-R_{A'[b^\dagger]}^2\in[9/10,1],
\end{align*} 
where the last equality holds from $n_{[b^\dagger]}=l \le 4$ and further $R_{A'[b^\dagger]}^2=n_{[b^\dagger]} (R_{A'[b^\dagger]}^2/n_{[b^\dagger]})\le 1/10.$

\paragraph{Step 4. Apply Lemma \ref{lemma:concentration}}
First, for the numerator of the right-hand side of Lemma \ref{lemma:concentration}, we have

\begin{align}\label{eq:thm1 second bounf main numerator nk-l=0}
    &(\E [|W_{D,\phi}|]+\E [|L|]/\lambda)(\frac{\alpha}{2}+\delta)+\sqrt{\var(\E [\frac{1}{2\lambda}(W_{D,\phi}'-W_{D,\phi})^2\bold {1}(|W_{D,\phi}'-W_{D,\phi}|\le \delta\mid W_{D,\phi})])}\notag\\
     \le &(\sqrt{\E [W_{D,\phi}^2]}+ |L|/\lambda)(\frac{\alpha}{2}+\delta)+B_0.
    \end{align}
    
    By $L=0$ under $n_{[b^\dagger]}-l=0$, the right-hand side  of \eqref{eq:thm1 second bounf main numerator nk-l=0} is upper bounded by
    \begin{align}\label{eq:numerator nk=l}
        (\sqrt{\E [W_{D,\phi}^2]}+ |L|/\lambda)(\frac{\alpha}{2}+\delta)+B_0\le \frac{\alpha}{2}+(32+16\sqrt{7})\sumk\sumijk|(a')_{ij}^\textup{s}|^3/\nk.
    \end{align}

Second, for the denominator of the right-hand side of Lemma \ref{lemma:concentration}, by \eqref{eq:bound for L/lambda} and \eqref{eq:upper bounf for W_D,phi}, we have
    \begin{align}\label{eq:demumer nk=l}
        \E [W_{D,\phi}^2]- \E[ |W_{D,\phi} L|]/\lambda -1/2
        \ge \frac{9}{10}-\frac 1 2=\frac 2 5.
    \end{align}

The inequalities \eqref{eq:numerator nk=l} and \eqref{eq:demumer nk=l} imply 
\begin{align}\label{eq:concentration nk=l}
    \pr(W_{D,\phi}\in [t,t+\alpha])\le \frac{5}{4}\alpha +40(2+\sqrt 7)\sumk\sumijk|(a')_{ij}^\textup{s}|^3/\nk.
\end{align}
\subsubsection{When $n_{[b^\dagger]}-l\ge 2$}
\paragraph{Step 1. Bound $\delta$ and $B_0$}
Using Theorem \ref{thm:bound for delta and B02}(i) and $n_{[b^\dagger]}/(n_{[b^\dagger]}-l)\le 3$, we have 
\begin{align}
   \delta\le& 32 \sum_{k\neq b^\dagger}\sumijk |(a')^\textup{s}_{ij}|^3/\nk +32\sumRb  |(a')^\textup{s}_{ij}|^3/(\nk-l) \notag \\
   \le &32 \frac{\nk}{\nk-l}\sumk \sumijk |(a')^\textup{s}_{ij}|^3/\nk \notag \\
   \le  &96 \sumk \sumijk |(a')^\textup{s}_{ij}|^3/\nk
\end{align}

Using Theorem \ref{thm:bound for delta and B02}(ii), similarly, we have
$$B_0\le 3\times \sqrt{32\times 56}  \sumk\sumijk   |(a')^\textup{s}_{ij}|^3/\nk=48\sqrt{7}\sumk\sumijk   |(a')^\textup{s}_{ij}|^3/\nk . $$
\paragraph{Step 2. Bound $|L| / \lambda$} We have 
\begin{align*}
 |L|=&\frac{2}{(n_{[b^\dagger]}-l)(n-l-K)}|\sum_{i\in \mathcal R,j\in \pi(\mathcal R)}(a')^\textup{s}_{ij}|\\
    \le & \frac{2}{(n_{[b^\dagger]}-l)(n-l-K)}\sqrt{l^2 \sum_{i\in \mathcal R,j\in \pi(\mathcal R)}\{(a')^\textup{s}_{ij}\}^2}\\
    \le & \frac{2}{(n_{[b^\dagger]}-l)(n-l-K)}\sqrt{l^2 \sum_{i,j\in \mathcal I_{[b^\dagger]}}\{(a')^\textup{s}_{ij}\}^2}\\
 = & \frac{2}{(n_{[b^\dagger]}-l)(n-l-K)}\sqrt{l^2 (n_{[b^\dagger]}-1)R^2_{A'[b^\dagger]}}\\
 \le &\frac{2l \sqrt{n_{[b^\dagger]}}R_{A'[b^\dagger]}}{(n_{[b^\dagger]}-l)(n-l-K)}.
\end{align*}
Therefore, by $n_{[b^\dagger]}-l\ge 2$, $l\le 4$ and $R_{A'[b^\dagger]}^2/n_{[b^\dagger]}\le 1/3600$, we have
\begin{align}\label{eq:bound for L/lambda}
    \frac{|L|}{\lambda}\le \frac{l \sqrt{n_{[b^\dagger]}}R_{A'[b^\dagger]}}{n_{[b^\dagger]}-l}=\frac{l n_{[b^\dagger]}}{n_{[b^\dagger]}-l}\sqrt{\frac{R_{A'[b^\dagger]}^2}{n_{[b^\dagger]}}}\le \frac 1 5.
\end{align}

\paragraph{Step 3. Give a lower bound and an upper bound on $\E [W_{D,\phi}^2]$}

We have $\E [W_{D,\phi}^2] = (\E [W_{D,\phi}])^2+\var(W_{D,\phi})$, where 
\begin{align}\label{eq:E W_D,phi}
    \E [W_{D,\phi}]=\E \left[\sum_{k\neq b^\dagger} W_{D,\phi[k]}\right]+ \E \left[\sum_{i\in \mathcal I_{[b^\dagger]}\setminus\mathcal R} (a')^\textup{s}_{i\phi(i)}\right]=\E \left[\sum_{i\in \mathcal I_{[b^\dagger]}\setminus\mathcal R} (a')^\textup{s}_{i\phi(i)}\right].
\end{align}
and
\begin{align}\label{eq:var(W_D phi)}
    \var (W_{D,\phi})=  \sum_{k\neq b^\dagger}\var (W_{D,\phi[k]})+\var \left\{\sum_{i\in \mathcal I_{[b^\dagger]}\setminus\mathcal R} (a')^\textup{s}_{i\phi(i)}\right\}.
\end{align}

For the first term of the right-hand side of \eqref{eq:var(W_D phi)}, we have 
$$\var( W_{D,\phi[k]})=\frac{1}{\nk-1} \sumijk \{(a')^\textup{s}_{ij}\}^2=R_{A'[k]}^2.$$

Therefore, by \eqref{eq:E W_D,phi} and \eqref{eq:var(W_D phi)}, we have
\begin{align}\label{eq: W_D,phi decom}
    \E [W_{D,\phi}^2]=&\sum_{k\neq b^\dagger} R_{A'[k]}^2+ \var \left\{\sum_{i\in \mathcal I_{[b^\dagger]}\setminus\mathcal R} (a')^\textup{s}_{i\phi(i)}\right\}+\left[\E \left[\sum_{i\in \mathcal I_{[b^\dagger]}\setminus\mathcal R} (a')^\textup{s}_{i\phi(i)}\right]\right]^2\notag\\
    =& \sum_{k\neq b^\dagger} R_{A'[k]}^2+\E \left[\left[\sum_{i\in \mathcal I_{[b^\dagger]}\setminus\mathcal R} (a')^\textup{s}_{i\phi(i)}\right]^2\right].
\end{align}

For the second term of the right-hand side of \eqref{eq:var(W_D phi)}, we review a lemma to obtain its upper bound.
\begin{lemma}[Equation (2.9) in  \citet{chen2015error}]\label{lemma:chen2015 eq2.9}
We have
    \begin{align}\label{eq:chen2015 eq2.9}
  & \E\left[  \left[\sum_{i\in \mathcal I_{[b^\dagger]}\setminus\mathcal R} (a')^\textup{s}_{i\phi(i)}\right]^2\right] =  \frac{1}{n_{[b^\dagger]}-l-1}\sumRb \{(a')^\textup{s}_{ij}\}^2\notag\\
&~~~~~+\frac{1}{(n_{[b^\dagger]}-l)(n_{[b^\dagger]}-l-1)}\sumRb (a')^\textup{s}_{ij}\left\{\sum_{k \in \mathcal R,t\in \pi(\mathcal R)} (a')^\textup{s}_{kt}+\sum_{k\in \mathcal \pi(\mathcal R)} (a')^\textup{s}_{ik}+\sum_{t\in \mathcal R} (a')^\textup{s}_{tj}\right\}.
\end{align}
\end{lemma}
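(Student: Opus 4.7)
Set $m = n_{[b^\dagger]} - l$, abbreviate $\tilde a_{ij} = (a')^\textsc{s}_{ij}$, and exploit that $A' \in M_n^1$ has vanishing row and column sums within stratum $b^\dagger$. This immediately yields the ``flipping'' identities
\begin{align*}
\sum_{j \in \mathcal I_{[b^\dagger]}/\pi(\mathcal R)} \tilde a_{ij} &= -\sum_{t \in \pi(\mathcal R)} \tilde a_{it} \quad (i \in \mathcal I_{[b^\dagger]}/\mathcal R), \\
\sum_{i \in \mathcal I_{[b^\dagger]}/\mathcal R} \tilde a_{ij} &= -\sum_{k \in \mathcal R} \tilde a_{kj} \quad (j \in \mathcal I_{[b^\dagger]}/\pi(\mathcal R)), \\
\sumRb \tilde a_{ij} &= \sum_{k \in \mathcal R,\, t \in \pi(\mathcal R)} \tilde a_{kt},
\end{align*}
which will let me convert marginal sums over the truncated index sets directly into the ``deleted block'' sums that appear on the right-hand side of \eqref{eq:chen2015 eq2.9}.

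\textbf{Direct expansion and inclusion--exclusion.} First I will write
$$
\E\Bigl(\sum_{i \in \mathcal I_{[b^\dagger]}/\mathcal R} \tilde a_{i\phi(i)}\Bigr)^{2} = \sum_i \E \tilde a_{i\phi(i)}^2 + \sum_{i \neq i'} \E[\tilde a_{i\phi(i)} \tilde a_{i'\phi(i')}],
$$
use $\pr(\phi(i) = j) = 1/m$ and $\pr(\phi(i) = j, \phi(i') = j') = 1/(m(m-1))$ for $i \neq i'$, $j \neq j'$, and obtain $\frac{1}{m}\sumRb \tilde a_{ij}^2$ from the diagonal together with $\frac{1}{m(m-1)} \sum_{i \neq i',\, j \neq j'} \tilde a_{ij}\tilde a_{i'j'}$ from the off-diagonal. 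Next, inclusion--exclusion on the unrestricted index sets gives
$$
\sum_{i \neq i',\, j \neq j'} \tilde a_{ij}\tilde a_{i'j'} = \Bigl(\sumRb \tilde a_{ij}\Bigr)^{2} - \sum_i\Bigl(\sum_j \tilde a_{ij}\Bigr)^{2} - \sum_j\Bigl(\sum_i \tilde a_{ij}\Bigr)^{2} + \sumRb \tilde a_{ij}^2.
$$
Applying one flipping identity to each square (for example $(\sum_j \tilde a_{ij})^2 = -(\sum_j \tilde a_{ij})(\sum_{k \in \pi(\mathcal R)} \tilde a_{ik})$, summed over $i$) converts the right-hand side into
$$
\sumRb \tilde a_{ij}\Bigl[\sum_{k \in \mathcal R,\, t \in \pi(\mathcal R)} \tilde a_{kt} + \sum_{k \in \pi(\mathcal R)} \tilde a_{ik} + \sum_{t \in \mathcal R} \tilde a_{tj}\Bigr] + \sumRb \tilde a_{ij}^2.
$$

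\textbf{Final assembly and main obstacle.} Combining the two $\sumRb \tilde a_{ij}^2$ contributions via $\frac{1}{m} + \frac{1}{m(m-1)} = \frac{1}{m-1}$ reproduces identity \eqref{eq:chen2015 eq2.9} exactly. The only delicate point is the sign bookkeeping when the flipping identities are applied to the squared marginals: each of the two minus signs in front of $\sum_i(\sum_j \cdot)^2$ and $\sum_j(\sum_i\cdot)^2$ must be absorbed by exactly one application of the identity so that these contributions become \emph{positive} $\sumRb$-weighted cross-terms; otherwise the algebra is entirely routine. Alternatively, the identity is the stratum-restricted analogue of equation (2.9) in \cite{chen2015error}, which can be invoked directly on the submatrix of $(a')^\textsc{s}$ indexed by $\Rb$, with the flipping identities providing the translation between its notation and ours.
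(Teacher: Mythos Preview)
Your proposal is correct. The paper does not supply its own proof of this lemma: it is stated as a quotation of Equation~(2.9) in \cite{chen2015error}, and the supplementary-material section titled ``Proof of Lemma~\ref{lemma:chen2015 eq2.9}'' in fact proves the preceding unnamed lemma about $\E(W_{D,\phi}'-W_{D,\phi}\mid W_{D,\phi})=-\lambda W_{D,\phi}+L$ (a labeling slip). Your diagonal/off-diagonal expansion followed by inclusion--exclusion and the flipping identities is exactly the standard derivation of the cited identity; the sign bookkeeping you flag is handled correctly, and the combination $\frac{1}{m}+\frac{1}{m(m-1)}=\frac{1}{m-1}$ closes the argument.
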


By Lemma \ref{lemma:chen2015 eq2.9}, we need to bound the four terms of  
the right-hand side of \eqref{eq:chen2015 eq2.9}, respectively. We have
\begin{align*}
    \left|\sumRb (a')^\textup{s}_{ij}\left\{\sum_{k\in \pi(\mathcal R)}(a')^\textup{s}_{ik}\right\}\right|=&\left|\sum_{i\in \mathcal I_{[b^\dagger]}\setminus\mathcal R}\left\{\sum_{k\in \pi(\mathcal R)}(a')^\textup{s}_{ik}\right\}^2\right|\\
    \le & \left|\sum_{i\in \mathcal I_{[b^\dagger]}\setminus\mathcal R} l \sum_{k\in \pi(\mathcal R)}\left\{(a')^\textup{s}_{ik}\right\}^2\right|\le l (n_{[b^\dagger]}-1)R^2_{A'[b^\dagger]}.
\end{align*}
We can similarly obtain
$$\left|\sumRb (a')^\textup{s}_{ij}\left\{\sum_{l \in \mathcal R} (a')^\textup{s}_{tj}\right\}\right|\le l(n_{[b^\dagger]}-1)R^2_{A'[b^\dagger]}.$$
Moreover, by \eqref{eq:sum Db=sum r pi(r)} and the Cauchy--Schwarz inequality, we have
\begin{align*}
  \left|\sumRb (a')^\textup{s}_{ij}\sum_{k \in \mathcal R,t\in \pi(\mathcal R)} (a')^\textup{s}_{kt}\right|=&\left|\left\{\sum_{k \in \mathcal R,t\in \pi(\mathcal R)} (a')^\textup{s}_{kt}\right\}^2\right|\\
  \le& l^2 \sum_{k \in \mathcal R,t\in \pi(\mathcal R)} \{(a')^\textup{s}_{kt}\}^2\le l^2(n_{[b^\dagger]}-1)R^2_{A'[b^\dagger]}.  
\end{align*}
It implies
\begin{align}\label{eq:24()R}
   & \left|\frac{1}{(n_{[b^\dagger]}-l)(n_{[b^\dagger]}-l-1)}\sumRb (a')^\textup{s}_{ij}\left\{\sum_{k \in \mathcal R,t\in \pi(\mathcal R)} (a')^\textup{s}_{kt}+\sum_{k\in \mathcal \pi(\mathcal R)} (a')^\textup{s}_{ik}+\sum_{t\in \mathcal R} (a')^\textup{s}_{tj}\right\}\right|\notag\\
    \le& \frac{ (l^2+l)(n_{[b^\dagger]}-1)R^2_{A'[b^\dagger]}}{(n_{[b^\dagger]}-l)(n_{[b^\dagger]}-l-1)}\notag\\
\le &\frac{24 (n_{[b^\dagger]}-1)R^2_{A'[b^\dagger]}}{(n_{[b^\dagger]}-l)(n_{[b^\dagger]}-l-1)}. 
\end{align}
Therefore, by \eqref{eq: W_D,phi decom}, \eqref{eq:chen2015 eq2.9} and \eqref{eq:24()R}, 
we have
\begin{align}\label{eq:bound 24 nR^2}
   \left |\E [W_{D,\phi}^2]-\left[\sum_{k\neq b^\dagger} R_{A'[k]}^2+\frac{1}{n_{[b^\dagger]}-l-1}\sumRb \{(a')^\textup{s}_{ij}\}^2\right]\right|\le \frac{24 (n_{[b^\dagger]}-1)R^2_{A'[b^\dagger]}}{(n_{[b^\dagger]}-l)(n_{[b^\dagger]}-l-1)}.
\end{align}

Since we assume that $\sumk \sumijk |(a')^\textup{s}_{ij}|^3/\nk\le \varepsilon_0$, we have
\begin{align}\label{eq:bound for sum iR or j piR a_ij^2}
    \sum_{i,j\in \mathcal I_{[b^\dagger]}: i\in \mathcal R\atop or~ j\in \pi(\mathcal R)}\{(a')^\textup{s}_{ij}\}^2\le (8n_{[b^\dagger]})^{1/3}\left\{\sumijk |(a')^\textup{s}_{ij}|^3\right\}^{2/3}\le 2n_{[b^\dagger]}\varepsilon_0^{2/3}.
\end{align}

Furthermore, by \eqref{eq: W_D,phi decom}, \eqref{eq:bound 24 nR^2} and \eqref{eq:bound for sum iR or j piR a_ij^2}, we have
\begin{align}\label{eq:lower bounf for W_D,phi}
    \E [W_{D,\phi}^2]\ge &\sum_{k\neq b^\dagger}  R_{A'[k]}^2+ \frac{1}{n_{[b^\dagger]}-1} \sumRb \{(a')^\textup{s}_{ij}\}^2-\frac{24 (n_{[b^\dagger]}-1)R^2_{A'[b^\dagger]}}{(n_{[b^\dagger]}-l)(n_{[b^\dagger]}-l-1)}\notag \\
    \ge & \sum_{k\neq b^\dagger}  R_{A'[k]}^2+\frac{1}{n_{[b^\dagger]}-1} \sum_{i,j\in \mathcal I_{[b^\dagger]}}\{(a')^\textup{s}_{ij}\}^2- \frac{1}{n_{[b^\dagger]}-1}\sum_{i,j\in \mathcal I_{[b^\dagger]}: i\in \mathcal R\atop or~ j\in \pi(\mathcal R)}\{(a')^\textup{s}_{ij}\}^2\notag\\
    &~~~~~~~ -\frac{24 (n_{[b^\dagger]}-1)R^2_{A'[b^\dagger]}}{(n_{[b^\dagger]}-l)(n_{[b^\dagger]}-l-1)}\notag\\
    \ge & 1- \frac{2n_{[b^\dagger]} \varepsilon_0^{2/3}}{(n_{[b^\dagger]}-1)}-\frac{24 (n_{[b^\dagger]}-1)n_{[b^\dagger]}}{(n_{[b^\dagger]}-l)(n_{[b^\dagger]}-l-1)}\frac{R^2_{A'[b^\dagger]}}{n_{[b^\dagger]}}.\notag\\
    \ge & 1-4\varepsilon_0^{2/3}- 360\times 1/3600\notag\\
    =&9/10-4\varepsilon_0^{2/3}. 
\end{align}
Also, we can obtain the upper bound:
\begin{align}\label{eq:upper bounf for W_D,phi}
    \E [W_{D,\phi}^2] \le& \sum_{k\neq b^\dagger}  R_{A'[k]}^2+ \frac{(n_{[b^\dagger]}-1)R^2_{A'[b^\dagger]}}{n_{[b^\dagger]}-l-1} +\frac{24 (n_{[b^\dagger]}-1)R^2_{A'[b^\dagger]}}{(n_{[b^\dagger]}-l)(n_{[b^\dagger]}-l-1)}\notag\\
    \le &1+\frac{lR^2_{A'[b^\dagger]}}{n_{[b^\dagger]}-l-1}+\frac{24 (n_{[b^\dagger]}-1)R^2_{A'[b^\dagger]}}{(n_{[b^\dagger]}-l)(n_{[b^\dagger]}-l-1)}.
\end{align}
Since $l\le 4$ and $\nk-l\ge 2$, we have that
$$ \E [W_{D,\phi}^2]\le 1+\frac{28(n_{[b^\dagger]}-1)R^2_{A'[b^\dagger]}}{(n_{[b^\dagger]}-l)(n_{[b^\dagger]}-l-1)}=1+\frac{28(n_{[b^\dagger]}-1)n_{[b^\dagger]}}{(n_{[b^\dagger]}-l)(n_{[b^\dagger]}-l-1)}\frac{R^2_{A'[b^\dagger]}}{n_{[b^\dagger]}}\le \frac{5}{4}.$$

\paragraph{Step 5. Apply Lemma \ref{lemma:concentration}}

Here we apply Lemma \ref{lemma:concentration} to bound $\pr(W_{D,\phi}\in [t,t+\alpha])$. 

First, by \eqref{eq:thm1 second bounf main numerator nk-l=0}, the numerator of the right-hand side of Lemma \ref{lemma:concentration} can be bounded by
\begin{align*}
 & (\sqrt{\E [W_{D,\phi}^2]}+ |L|/\lambda)(\frac{\alpha}{2}+\delta)+B_0  \\
  \le& \left(\sqrt{71}+\frac 1 5 \right) \left(\frac{\alpha}{2}+96 \sumk\sumijk   |(a')^\textup{s}_{ij}|^3/\nk\right)+48\sqrt{7}\sumk\sumijk   |(a')^\textup{s}_{ij}|^3/\nk.\\
  = &c_1' \alpha+c_2'\sumk \left(\sumijk |(a')^\textup{s}_{ij}|^3/\nk\right),
\end{align*}
where $c_1'=(\sqrt{71}+1/ 5)/2,$ and $c_2'=96(\sqrt{71}+1/ 5)+48\sqrt 7$. 

Second, for the denominator of the right-hand side of Lemma \ref{lemma:concentration}, by \eqref{eq:bound for L/lambda} and \eqref{eq:upper bounf for W_D,phi}, we have
\begin{align*}
     \E [W_{D,\phi}^2]- \E [|W_{D,\phi} L|]/\lambda -1/2  
\ge  \frac{9}{10}-4\varepsilon_0^{2/3}-\sqrt{\frac 5 4} \frac 1 5-\frac 1 2
\ge \frac{4-\sqrt 5}{10}-4\varepsilon_0^{2/3}.
\end{align*}
To make it positive, we further require $\varepsilon_0<(80)^{-3/2}$. Then we have $$\E [W_{D,\phi}^2]- \E [|W_{D,\phi} L|]/\lambda -1/2\ge 1/10.$$
Therefore,  we have  
\begin{align}\label{eq:concentralition, thm1}
    \pr (W_{D,\phi}\in [t,t+\alpha])\le 10 \left[c_1'\alpha +c_2'\sumk\left\{\sumijk|(a')^\textup{s}_{ij}|^3/\nk\right\}\right].
\end{align}

The inequalities \eqref{eq:concentration nk=l} and \eqref{eq:concentralition, thm1} lead to
\begin{align}\label{eq:unified concentration}
    \pr(W_{D,\phi}\in [t,t+\alpha])\le C_1 \alpha+C_2\sumk\sumijk|(a')^\textup{s}_{ij}|^3/\nk,
\end{align}
for $C_1=\max\{10c_1',5/4\}$ and $C_2=\max\{10c_2',40(2+\sqrt 7)\}$.
\subsubsection{Finish the Proof}
We have 
\begin{align*}
    \mathcal A_3\le& \frac{1}{\alpha}\E \left[|W_{(A')^\textup{s},\pi}^*-W_{(A')^\textup{s},\pi}|\sup_{t\in \mathbb R}\pr (W_{D,\phi}\in [t,t+\alpha])\right]\\
    \le& \frac  1\alpha\left[C_1 \alpha +C_2\sumk\sumijk |(a')^\textup{s}_{ij}|^3/\nk\right] \E [|W_{(A')^\textup{s},\pi}^*-W_{(A')^\textup{s},\pi}|]\\
    \le &\frac{80}{\alpha }\left[C_1 \alpha +C_2\sumk\sumijk |(a')^\textup{s}_{ij}|^3/\nk\right]\sumk\sumijk |(a')^\textup{s}_{ij}|^3/\nk\\
    =&  80\left[C_1 \sumk \sumijk |(a')^\textup{s}_{ij}|^3/\nk+\frac{C_2}{\alpha}\left\{\sumk \sumijk |(a')^\textup{s}_{ij}|^3/\nk \right\}^2\right]. 
\end{align*}
Then in the proof of Theorem \ref{theorem:beb}, we have that for any $\alpha>0$,
\begin{align*}
    & \sup_{t\in \mathbb R}|\pr(W_{(A')^\textup{s},\pi}\le t)-\Phi(t)|\\
    \le &  \mathcal A_1+\mathcal A_2+\mathcal A_3+\frac{\alpha}{\sqrt{2\pi}}\\
    \le & 80\sumk\left\{\sumijk |(a')^\textup{s}_{ij}|^3/\nk\right\}+80(\sqrt{c_6}+4)\sumk\left\{\sumijk |(a')^\textup{s}_{ij}|^3/\nk\right\}\\
    &~~~ +{80}\left[ C_1 \sumk\sumijk |(a')^\textup{s}_{ij}|^3/\nk+\frac{C_2}{\alpha}\left\{\sumk \sumijk |(a')^\textup{s}_{ij}|^3/\nk\right\}^2\right]+\frac{\alpha}{\sqrt{2\pi}}.
\end{align*}
Choose $\alpha=\sumk \sumijk |(a')^\textup{s}_{ij}|^3/\nk$, then
$$\sup_{t\in \mathbb R}|\pr(W_{(A')^\textup{s},\pi}\le t)-\Phi(t)|\le C' \sumk \sumijk |(a')^\textup{s}_{ij}|^3/\nk=C' \sumk \beta_{A'[k]}/\nk.$$
By Lemma \ref{lemma:scaling} and Lemma \ref{lemma:truncation}, for $A\in M_n^\textup{s}$, and $\sumk \beta_{A[k]}/\nk\le \varepsilon_0$, we have 
\begin{align*}
    \sup_{t\in \mathbb R}|\pr(W_{A,\pi}\le t)-\Phi(t)|\le& \sup_{t\in \mathbb R}|\pr(W_{(A')^{\textup{s}},\pi}\le t)-\Phi(t)|+c_3\sumk\frac{\beta_{A[k]}}{\nk}\\
    \le& \left(c_2C'+c_3\right)\sumk \frac{\beta_{A[k]}}{\nk},
\end{align*}
which leads to Theorem \ref{theorem:beb}.

\subsection{Proof of Corollary \ref{coro:order general-beb}}
We only need to consider 
\begin{align}\label{eq:coro order}
    \sumk \sumijk|a_{ij}^\textup{s}|^3/\nk=\frac{\frac{1}{n} \sumk \nk \sumijk |a_{ij}^0|^3/\nk^2}{ n^{1/2} (\frac{1}{n} \sumk  \sumijk |a_{ij}^0|^2/(\nk-1))^{3/2}}.
\end{align}
It is the same as \eqref{proof-coro eq} in Corollary \ref{coro:order fewstrata} and does not need to be repeated.

\subsection{Proof of Lemma \ref{thm:zerobias}}
\begin{proof}

Based on Lemmas \ref{lemma:chen prop4.6} and \ref{lemma:chenlemma4.4}, we only need to verify
\begin{align}\label{eq:zero-bias proof}
&\pr(B^\dagger=k,I^\dagger=i,J^\dagger=j,\pi^\dagger(\alpha)=\xi_\alpha,\alpha=1,\ldots,n)\notag\\
=&\frac{(W_{A,\pi}-W_{A,\pi''})^2}{\E[[W_{A,\pi}-W_{A,\pi''}]^2]}\cdot\pr(B=k,I=i,J=j,\pi(\alpha)=\xi_\alpha,\alpha=1,\ldots,n).
\end{align}
We assume $i,j\in\Ik $ and $\xi_\alpha\in \Ik $ for $\alpha\in\Ik $, $k=1,\ldots,K$. Define
$$
b(i,j,p,q)=a_{ip}+a_{jq}-a_{iq}-a_{jp},
$$
so that $b(i,j,p,q)=-b(i,j,q,p)=-b(j,i,p,q)$.
\paragraph{Simplify the left-hand side of \eqref{eq:zero-bias proof}}
We have
\begin{align*}
&\pr(B^\dagger=k,I^\dagger=i,J^\dagger=j,\pi^\dagger(\alpha)=\xi_\alpha,\alpha=1,\ldots,n)\\
=&\pr(B^\dagger=k)\cdot\pr(I^\dagger=i,J^\dagger=j,\pi^\dagger(i)=\xi_i,\pi^\dagger(j)=\xi_j\mid B^\dagger=k)\\
&\cdot \pr(\pi^\dagger(\alpha_1)=\xi_{\alpha_1},\alpha_1\in \Ik, \alpha_1\notin\{i,j\})\cdot \pr(\pi^\dagger(\alpha_2)=\xi_{\alpha_2},\alpha_2\in \mathcal I_{[k']}, k'\neq k).
\end{align*}
The definition of $B^\dagger$, $I^\dagger,J^\dagger,\pi^\dagger$ gives the first two terms, and Proposition \ref{prop:permutation property} gives the last two terms. Therefore, the left-hand side of \eqref{eq:zero-bias proof} equals 
\begin{align*}
\frac{\sigma_{A[k]}^2}{\sigma^2_A}\cdot\frac{b^2(i,j,\xi_i,\xi_j)}{4\nk^2(\nk-1)\sigma_{A[k]}^2}\cdot\frac{1}{(\nk-2)!}\cdot\prod_{k'\neq k}\frac{1}{n_{[k']}!}=\frac{b^2(i,j,\xi_i,\xi_j)}{4\nk\sigma^2_A}\prod_{k'=1}^K\frac{1}{n_{[k']}!}.
\end{align*}
\paragraph{Simplify the right-hand side of \eqref{eq:zero-bias proof}}
By definition, we have
$$
W_{A,\pi}-W_{A,\pi''}=a_{i,\pi(i)}+a_{j,\pi(j)}-a_{i,\pi(j)}-a_{j,\pi(i)}=b(i,j,\pi(i),\pi(j)).
$$
The result in \eqref{eq:var diff W_A,pi} implies
$$\E[[W_{A,\pi}-W_{A,\pi''}]^2]=\frac{4\sigma_A^2}{n-K}.$$
Furthermore, combining the definition of $B$ and $\pi$, we have
\begin{align*}
    &\frac{(W_{A,\pi}-W_{A,\pi''})^2}{\E[[W_{A,\pi}-W_{A,\pi''}]^2]}\cdot\pr(B=k,I=i,J=j,\pi(\alpha)=\xi_\alpha,\alpha=1,\ldots,n)\\
    =& \frac{b^2(i,j,\xi_i,\xi_j)}{4\sigma^2_A/(n-K)}\cdot \pr(B=k)\cdot \pr(I=i,J=j\mid B=k)\cdot\pr(\pi(\alpha)=\xi_\alpha,\alpha=1,\ldots,n)\\
    =& \frac{b^2(i,j,\xi_i,\xi_j)}{4\sigma^2_A/(n-K)}\cdot \frac{\nk-1}{n-K}\cdot \frac{1}{\nk(\nk-1)}\cdot \prod_{k'=1}^K\frac{1}{n_{[k']}!}\\
    =&\frac{b^2(i,j,\xi_i,\xi_j)}{4\nk\sigma_A^2}\prod_{k'=1}^K\frac{1}{n_{[k']}!}.
\end{align*}

Therefore, we prove \eqref{eq:zero-bias proof}.
\end{proof}

\subsection{Proof of Corollary \ref{coro:sampling}}
\begin{proof}
We have
  \begin{align*}
    A^{\textup{0c}}=\diag \left\{ \frac{w_{[k]}}{n_{[k]1}}\left(\begin{matrix}
   (Y_{{[k]1}}-\bar Y_{[k]}) \bold{1}^\T_{n_{[k]1}} &\bold{0}^\T_{n_{[k]0}}\\
   \vdots &\vdots\\
   (Y_{{[k]\nk}}-\bar Y_{[k]}) \bold{1}^\T_{n_{[k]1}}  &\bold{0}^\T _{n_{[k]0}}
        \end{matrix}\right)\right\}_{k=1,\ldots,K},
        \end{align*}
        Then $A^\textup{s}=A^0/\sigma$. Applying Lemma \ref{coro:apply BEB}, we have that
        \begin{align}\label{eq:beb sampling order proof}
            \sumk \frac 1 {\nk\sigma^3}\sumijk |a_{ij}^\textup{0c}|^3
            =&\sumk \frac 1 {\nk\sigma^3} \frac{w_{[k]}^3}{n_{[k]1}^3}\sumik n_{[k]1} |Y_i-\bar Y_{[k]}|^3\notag\\
            =& \sumk \frac{w_{[k]}^3}{\nk n_{[k]1}^2 \sigma^3}\sumik |Y_i-\bar Y_{[k]}|^3.\notag\\
            =&\sumk \frac{w_{[k]}^3}{ n_{[k]1}^2 \sigma^3}\Mkthree(Y).
        \end{align}
        Now we quantify the order of \eqref{eq:beb sampling order proof} under $w_{[k]}=\nk/n$, $k=1,\ldots,K$.
        First, recall $$\sigma^2=\frac 1 n \sumk w_{[k]}\frac{n_{[k]0}}{n_{[k]1}} S_{[k]}^2=\frac 1 n \sumk w_{[k]}\frac{1-p_{[k]}}{p_{[k]}} S_{[k]}^2.$$
When $p_{[k]}$ are bounded away from $0$ and $1$ and $\sumk  w_{[k]} S_{[k]}^2$ have nonzero finite limits, $\sigma^2\asymp 1/n.$
Second, when  $p_{[k]}$ are bounded away from $0$ and $1$ and $\sumk  w_{[k]} \Mkthree(Y)$ have nonzero finite limits, we have $$\sumk \frac{w_{[k]}^3}{ n_{[k]1}^2}\Mkthree(Y)=\frac 1 {n^2}\sumk \frac{1}{ p_{[k]}^2 }w_{[k]}\Mkthree(Y)\asymp \frac 1 {n^2}.$$
Therefore, the order of \eqref{eq:beb sampling order proof} is $1/n^{1/2}$.
\end{proof}

\subsection{Proof of Corollary \ref{coro:strati exp}}
\begin{proof}
We have 
  \begin{align*}
    A^{\textup{0c}}=\diag \left\{ w_{[k]}\left(\begin{matrix}
  n_{[k]1}^{-1} (Y_{{[k]1}}(1)-\bar Y_{[k]}(1))\bold{1}_{n_{[k]1}}^\T  &n_{[k]0}^{-1} (Y_{{[k]1}}(0)-\bar Y_{[k]}(0))\bold{1}_{n_{[k]0}}^\T \\
   \vdots &\vdots\\
    n_{[k]1}^{-1}( Y_{{[k]\nk}}(1)-\bar Y_{[k]}(1))\bold{1}_{n_{[k]1}}^\T  &n_{[k]0}^{-1} (Y_{{[k]\nk}}(0)-\bar Y_{[k]}(0))\bold{1}_{n_{[k]0}}^\T 
        \end{matrix}\right)\right\}_{k=1,\ldots,K}.
        \end{align*}
          Therefore, applying Lemma \ref{coro:apply BEB}, we have that
        \begin{align}\label{eq:exp proof}
            &\sumk \frac 1 {\nk\sigma^3}\sumijk |a_{ij}^\textup{0c}|^3\notag\\
            =&\sumk \frac{w_{[k]}^3}{\nk\sigma^3}\sumik \left\{\frac{|Y_i(1)-\bar Y_{[k]}(1)|^3}{n_{[k]1}^3} n_{[k]1}+\frac{|Y_i(0)-\bar Y_{[k]}(0)|^3}{n_{[k]0}^3} n_{[k]0}\right\}\notag\\
            =& \sumk \frac{w_{[k]}^3}{\nk\sigma^3}\sumik 
            \left\{\frac{|Y_i(1)-\bar Y_{[k]}(1)|^3}{n_{[k]1}^2} +\frac{|Y_i(0)-\bar Y_{[k]}(0)|^3}{n_{[k]0}^2}\right\}\notag\\
            =& \sumk \frac{w_{[k]}^3}{\sigma^3} 
            \left\{\frac{\Mkthree(Y(1))}{n_{[k]1}^2} +\frac{\Mkthree(Y(0))}{n_{[k]0}^2}\right\}.
            \end{align}
Under Assumption of Corollary \ref{coro:strati exp}, $\sigma^2\asymp 1/n$. Furthermore, for $z=0,1$, we have
$$\sumk \frac{w_{[k]}^3}{n_{[k]z}^2} 
            {\Mkthree(Y(z))}=\frac 1 {n^2} \sumk \frac{1}{\{z p_{[k]}+(1-z)(1-p_{[k]})\}^2} 
            {w_{[k]}\Mkthree(Y(z))} \asymp \frac{1}{n^2}.$$
Therefore, the order of \eqref{eq:exp proof} is $1/n^{1/2}$. 
\end{proof}

\subsection{Proof of Corollary \ref{coro:post-sampling}}

\subsubsection{Proof of \eqref{ineq:post-sampling}}
We have
\begin{align}\label{eq:coro post-sampling}
        &\sup_{t\in \mathbb R}\left|\pr(\frac{\hat \gamma_{\textup{ps}}-\gamma}{\sigma}\le t \mid\mathcal D_1)-\Phi(t)\right|\notag\\
        \le &\sup_{t\in \mathbb R}\left|\sum_{ \bold U_1\in \mathcal D_1} \pr(\bold U_1\mid \mathcal D_1)\{\pr(\frac{\hat \gamma_{\textup{ps}}-\gamma}{\sigma}\le t \mid \bold U_1)-\Phi(\frac{\sigma t}{\sigma_{\textup{ps}}(\bold U_1)} )+\Phi(\frac{\sigma t}{\sigma_{\textup{ps}}(\bold U_1)} )-\Phi(t)\}\right|\notag\\
        \le  &\sum_{ \bold U_1\in \mathcal D_1}\pr(\bold U_1\mid \mathcal D_1) \sup_{t\in \mathbb R}| \pr(\frac{\hat \gamma_{\textup{ps}}-\gamma}{\sigma}\le t \mid \bold U_1)-\Phi(\frac{\sigma t}{\sigma_{\textup{ps}}(\bold U_1)} )| \notag\\
        &~~~~~~~~~~+ \sup_{t\in \mathbb R}|\sum_{ \bold U_1\in \mathcal D_1}\pr(\bold U_1\mid \mathcal D_1)\{\Phi(\frac{\sigma t}{\sigma_{\textup{ps}}(\bold U_1)} )-\Phi(t)\}|\notag\\
        =& \sum_{ \bold U_1\in \mathcal D_1} \pr(\bold U_1\mid \mathcal D_1) \sup_{t\in \mathbb R}| \pr(\frac{\hat \gamma_{\textup{ps}}-\gamma}{\sigma_{\textup{ps}}(\bold U_1)}\le t \mid \bold U_1)-\Phi(t )| + \sup_{t\in \mathbb R}|\E[\Phi(\frac{\sigma t}{\sigma_{\textup{ps}}(\bold U_1)})\mid\mathcal D_1]-\Phi(t)|.
    \end{align}
   By Corollary \ref{coro:sampling}, given $\bold U_1$, we have  
    \begin{align*}
        \sup_{t\in \mathbb R}\left|\pr(\frac{\hat\gamma_{\textup{ps}}-\gamma}{\sigma_{\textup{ps}}(\bold U_1)}\le t \mid \bold U_1)-\Phi(t)\right|
        \le  C \left(\sumk\frac{\pik^3}{\sigma^3(\bold U_1)} \frac{\Mkthree(Y)}{n_{[k]1}^2}\right).
    \end{align*}
    Consequently, we can bound the first term in the right-hand side of \eqref{eq:coro post-sampling} as 
    \begin{align}\label{eq:post sampling first term}
       & \sum_{ \bold U_1\in \mathcal D_1}\pr(\bold U_1\mid \mathcal D_1) \sup_{t\in \mathbb R}\left| \pr(\frac{\hat \gamma_{\textup{ps}}-\gamma}{\sigma_{\textup{ps}}(\bold U_1)}\le t \mid \bold U_1)-\Phi(t )\right|\notag\\
        \le & \E \left[C \sumk\frac{\pik^3}{\sigma^3(\bold U_1)} \frac{\Mkthree(Y)}{n_{[k]1}^2}\mid\mathcal D_1\right]\notag\\
        = &C \left[\sumk{\pik^3 \E[n_{[k]1}^{-2}\sigma^{-3}(\bold U_1)\mid\mathcal D_1]}\Mkthree(Y)\right].
    \end{align}
Therefore, \eqref{eq:coro post-sampling} and \eqref{eq:post sampling first term} imply \eqref{ineq:post-sampling} in Corollary \ref{coro:post-sampling}.

 \subsubsection{Order of the first term in \eqref{ineq:post-sampling}}

Because of
$$\sigma_{\textup{ps}}^2(\bold U_1)=\frac 1 n \sumk w_{[k]} \frac {n_{[k]0}}{n_{[k]1}} S_{[k]}^2 = \sumk \frac{w_{[k]}^2 S_{[k]}^2}{n_{[k]1}}-\frac 1 n \sumk w_{[k]} S_{[k]}^2,$$
and Lemma \ref{lemma:weight S}, we have 
\begin{align*}
\sigma_{\textup{ps}}^2(\bold U_1)\ge \frac{(\sumk w_{[k]} S_{[k]})^2}{n_1}-\frac{\sumk w_{[k]} S_{[k]}^2 }{n}\ge \frac{(\frac{\min_{1\le k\le K} S_{[k]}}{\max_{1\le k\le K} S_{[k]}} -p)(\sumk w_{[k]}S_{[k]}^2)}{p n}.
\end{align*}

Therefore, $\sigma_{\textup{ps}}^2(\bold U_1)\ge \underline C n^{-1}$ for some constant $\underline C$. 

We bound the first term in \eqref{ineq:post-sampling}:
\begin{align}\label{eq:first in post sampling simplify1}
    \sumk{\pik^3 \E[n_{[k]1}^{-2}\sigma^{-3}(\bold U_1)\mid\mathcal D_1]}\Mkthree(Y)\le & \underline C^{-3/2}n^{3/2} \sumk{\pik^3 \E[n_{[k]1}^{-2}\mid\mathcal D_1]}\Mkthree(Y)\notag\\
    = &  \underline C^{-3/2}n^{-1/2} \sumk{\pik \E[\frac{\nk^2}{n_{[k]1}^2}\mid\mathcal D_1]}\Mkthree(Y).
\end{align}

For the right-hand side of \eqref{eq:first in post sampling simplify1}, we have
$$\E\left[\frac{\nk^2}{n_{[k]1}^2}\mid\mathcal D_1\right]=\var\left(\frac{\nk}{n_{[k]1}}\mid\mathcal D_1\right)+\E\left[\frac{\nk}{n_{[k]1}}\mid\mathcal D_1\right]^2,$$
where the first term on the right-hand side is $O(1/n)$ from 
Lemma \ref{lemma:hyper} and the second term tends to $1/p^2$ from Lemma \ref{lemma: miratrix lemma1}. Therefore, the first term on the right-hand side of \eqref{ineq:post-sampling} is $O(1/n^{1/2})$.

\subsubsection{Order of the second term in \eqref{ineq:post-sampling}}
Denote $g(x)=\Phi(x^{-1/2} \sigma t)$. By the Taylor expansion, there exists $x_0$ between $x$ and $\sigma^2$ such that, 
$$\Phi\left(\frac{\sigma t}{\sqrt x}\right)-\Phi(t)=g(x)-g(\sigma^2)=g'(\sigma)(x-\sigma^2)+g''(x_0)(x-\sigma^2)^2.$$

Choose $x=\sigma^2(\bold U_1)$, so that $x_0=x_0(\bold U_1)$ depends on $\bold U_1$.
By $\E[\sigma_{\textup{ps}}^2(\bold U_1)\mid \mathcal D_1]=\sigma^2$, we have 
\begin{align*}
    |\E[\Phi(\frac{\sigma}{\sigma(\bold U_1)}\mid \mathcal D_1)-\Phi(t)]|=&|\E[g''(x_0(U_1)) \{\sigma^2(\bold U_1)-\sigma^2\}^2\mid \mathcal D_1]|\\
    \le & \E[|g''(x_0(\bold U_1))\{\sigma^2(\bold U_1)-\sigma^2\}^2|\mid \mathcal D_1].
\end{align*}

By Lemma \ref{lemma:g function}, we have 
\begin{align*}
    |g''(x_0)|\le& \frac 3 4 \sigma |t| x_0^{-5/2}\phi\left(\frac{\sigma t}{\sqrt{x_0}} \right)+\frac 1 4 \sigma^3 |t|^3 x_0^{-7/2} \phi(\frac {\sigma t}{\sqrt{x_0}}).
\end{align*}

Because $$\lim_{n\rightarrow \infty}n \sigma_{\textup{ps}}^2=\lim_{n\rightarrow \infty}\sumk \E[\frac{n_{[k]0}}{n_{[k]1}}\mid \mathcal D_1]w_{[k]}S_{[k]}^2= \frac{1-p}{p} \lim_{n\rightarrow \infty}\sumk w_{[k]}S_{[k]}^2$$
is a positive constant, where the last equation holds from Lemma \ref{lemma: miratrix lemma1}. Therefore, $\sigma_{\textup{ps}}^2$ is of the order $1/n$ and $\sigma^2(\bold U_1)\ge \underline c \sigma^2$ for some constant $\underline c$.

Because $x_0(\bold U_1)$ is between $\sigma^2$ and $\sigma^2(\bold U_1)$,  there exists a constant $B_1$, such that 
$$|g''(x_0(\bold U_1))|\le \sigma^{-4}\left(\frac{3}{4}\underline c^{-5/2} |t|+\frac{1}{4} \underline c^{-7/2} |t|^3 \right)\phi(\frac{|t|}{\underline  c^{1/2}} )\le B_1 \sigma^{-4},$$
where the last inequality holds because for any constant 
$C'>0$, we have $(|t|+|t|^3)\exp (-C't^2)\rightarrow 0$ as $|t|\rightarrow \infty$.
Therefore, 
$$\left|\E[\Phi(\frac{\sigma t}{\sigma_{\textup{ps}}(\bold U_1)})\mid\mathcal D_1]-\Phi(t)\right|\le \frac{B_1 \var(\sigma_{\textup{ps}}^2(\bold U_1)\mid \mathcal D_1)}{\sigma^4}.$$

Since $\sigma^{-4}=O(n^2)$, we only need to consider 
$$\var(\sigma^2(\bold U_1)\mid \mathcal D_1)= \var\left(\frac 1 {n} \sumk w_{[k]} \frac{n_{[k]0}}{n_{[k]1}}S_{[k]}^2\mid \mathcal D_1\right)=\frac 1 {n^2}\var( \sumk w_{[k]} \frac{\nk}{n_{[k]1}}S_{[k]}^2\mid \mathcal D_1).$$
The Cauchy--Schwarz inequality  implies that
\begin{align*}
  \var(\sigma^2(\bold U_1)\mid \mathcal D_1)\le &\frac {K}{n^2}\sumk w_{[k]}^2 \var(\frac{\nk}{n_{[k]1}}\mid \mathcal D_1) S_{[k]}^4\\
  \le& \frac {K}{n^2} \max_{1\le k\le K} S_{[k]}^4 \max_{1\le k\le K}  \var\left(\frac{\nk}{n_{[k]1}}\mid \mathcal D_1\right) =O\left(\frac 1 {n^3}\right).  
\end{align*}
Therefore, the second term in the right-hand side of \eqref{ineq:post-sampling} is $O(1/n)$.

\subsection{Proof of Corollary \ref{coro:post}}
\subsubsection{Proof of \eqref{ineq:post}}
We have 
\begin{align*}
        &\sup_{t\in \mathbb R}\left|\pr(\frac{\hat \tau_{\textup{ps}}-\tau}{\sigma}\le t \mid\mathcal D)-\Phi(t)\right|\\
        \le &\sup_{t\in \mathbb R}\left|\sum_{ \bold U\in \mathcal D} \pr(\bold U\mid \mathcal D)\{\pr(\frac{\hat \tau_{\textup{ps}}-\tau}{\sigma}\le t \mid \bold U)-\Phi(\frac{\sigma t}{\sigma_{\textup{ps}}(\bold U)} )+\Phi(\frac{\sigma t}{\sigma_{\textup{ps}}(\bold U)} )-\Phi(t)\}\right|\\
        \le  &\sum_{ \bold U\in \mathcal D}\pr(\bold U\mid \mathcal D) \sup_{t\in \mathbb R}\left| \pr(\frac{\hat \tau_{\textup{ps}}-\tau}{\sigma}\le t \mid  \bold U)-\Phi(\frac{\sigma t}{\sigma_{\textup{ps}}(\bold U)} )| + \sup_{t\in \mathbb R}|\sum_{ \bold U\in \mathcal D}\pr(\bold U\mid \mathcal D)\Phi(\frac{\sigma t}{\sigma_{\textup{ps}}(\bold U)} )-\Phi(t)\}\right|\\
        =& \sum_{ \bold U\in \mathcal D} \pr(\bold U\mid \mathcal D) \sup_{t\in \mathbb R}\left| \pr(\frac{\hat \tau_{\textup{ps}}-\tau}{\sigma_{\textup{ps}}(\bold U)}\le t \mid \bold U)-\Phi(t )\right| + \sup_{t\in \mathbb R}\left|\E[\Phi(\frac{\sigma t}{\sigma_{\textup{ps}}(\bold U)})\mid \mathcal D]-\Phi(t)\right|.
    \end{align*}
    Then, by Corollary \ref{coro:strati exp}, we have 
\begin{align*}
        \sup_{t\in \mathbb R}\left|\pr\left(\frac{\hat\tau_{\textup{ps}}-\tau}{\sigma_{\textup{ps}}(\bold U)}\le t\mid \bold U\right)-\Phi(t)
       \right|
        \le   \frac C {\sigma_{\textup{ps}}^3(\bold U)}\sumk  w_{[k]} ^3\left\{\frac{\Mkthree (Y(1))}{n_{[k]1}^2}+\frac{\Mkthree (Y(0))}{n_{[k]0}^2}\right\},
    \end{align*}
    Consequently, 
    \begin{align*}
       & \sum_{ \bold U\in \mathcal D}\pr(\bold U\mid \mathcal D) \sup_{t\in \mathbb R}\left| \pr(\frac{\hat \tau_{\textup{ps}}-\tau}{\sigma_{\textup{ps}}(\bold U)}\le t \mid \bold U)-\Phi(t)\right|\\
        \le & \E \left[   \frac C {\sigma_{\textup{ps}}^3(\bold U)}\sumk  w_{[k]} ^3\left\{\frac{\Mkthree (Y(1))}{n_{[k]1}^2}+\frac{\Mkthree (Y(0))}{n_{[k]0}^2}\right\} \mid \mathcal D\right]\\
       \le & C \E\left[\sumk\frac{\pik^3}{\sigma^3_{\textup{ps}}(\bold U)} 
       \left(\frac{\Mkthree(Y(1))}{n_{[k]1}^2}+\frac{\Mkthree(Y(0))}{n_{[k]0}^2}\right)\mid \mathcal D\right ].
    \end{align*}
\subsubsection{Order of the first term in \eqref{ineq:post}}
Because of
$$\sigma_{\textup{ps}}^2(\bold U)=\frac 1 n \sumk w_{[k]}( \frac {S_{[k]1}^2}{p_{[k]}} + \frac {S_{[k]0}^2}{1-p_{[k]}} -S_{[k]\tau}^2) = \sumk \frac{w_{[k]}^2 S_{[k]1}^2}{n_{[k]1}}+\sumk \frac{w_{[k]}^2 S_{[k]0}^2}{n_{[k]0}}-\frac 1 n \sumk w_{[k]} S_{[k]\tau}^2$$
and Lemma \ref{lemma:weight S}, we have that
\begin{align*}
\sigma_{\textup{ps}}^2(\bold U)\ge &\frac{(\sumk w_{[k]} S_{[k]1})^2}{n_1}+\frac{(\sumk w_{[k]} S_{[k]0})^2}{n_0}-\frac{\sumk w_{[k]} S_{[k]\tau}^2 }{n}\\
\ge& \frac{1}{n}\left(\frac{\min_{1\le k\le K} S_{[k]1}}{\max_{1\le k\le K} S_{[k]1}}\frac{S_1^2}{p}+\frac{\min_{1\le k\le K} S_{[k]0}}{\max_{1\le k\le K} S_{[k]0}}\frac{S_0^2}{1-p}-S_\tau^2\right).
\end{align*}

Therefore, $\sigma_{\textup{ps}}^2(\bold U)\ge \underline C n^{-1}$ for some constant $\underline C$ and further $\sigma_{\textup{ps}}^2\ge \underline C n^{-1}$. 
Similar to the proof of Corollary \ref{coro:post-sampling}, we can obtain that the first term in the right-hand side of \eqref{ineq:post} is $O(1/n)$.

\subsubsection{Order of the second term in \eqref{ineq:post}}
Similar to the proof of 
Corollary \ref{coro:post-sampling}, we have 
$$\E[\nk/n_{[k]1}\mid \mathcal D]\rightarrow 1/p,\quad \E[\nk/n_{[k]0}\mid \mathcal D]\rightarrow 1/(1-p).$$

Therefore, $$\limsup_n n \sigma_{\textup{ps}}^2\le  \limsup_n \sumk w_{[k]} (\E[p_{[k]}^{-1}\mid \mathcal D]S_{[k]1}^2+\E[(1-p_{[k]})^{-1}\mid \mathcal D]S_{[k]0}^2)= \lim_n \left(\frac{S_1^2}{p}+\frac{S_0^2}{1-p}\right),$$
which implies that $\sigma_{\textup{ps}}^2$ is of the order $1/n$. Thus, $\sigma^2(\bold U)\ge \underline c \sigma^2$ for some constant $\underline c$.

Similar to the proof of Corollary \ref{coro:post-sampling}, we have that
$$\left|\E[\Phi(\frac{\sigma t}{\sigma_{\textup{ps}}(\bold U)})\mid\mathcal D]-\Phi(t)\right|\le \frac{B_1 \var(\sigma_{\textup{ps}}^2(\bold U)\mid \mathcal D)}{\sigma^4},$$
for some constant $B_1$.
By $\sigma^{-4}=O(n^2)$ and the Cauchy--Schwarz inequality, we only need to consider 
\begin{align*}
    \var(\sigma^2(\bold U)\mid \mathcal D)\le & K \sumk \var\left\{ w_{[k]}^2 \left(\frac{S_{[k]1}^2}{n_{[k]1}}+\frac{S_{[k]0}^2}{n_{[k]0}}\right)\mid \mathcal D\right\}\\
    \le & 2 K \sumk \left\{\var\left( w_{[k]}^2 \frac{S_{[k]1}^2}{n_{[k]1}}\mid \mathcal D\right)+\var\left( w_{[k]} ^2\frac{S_{[k]0}^2}{n_{[k]0}}\mid \mathcal D\right)\right\}.
\end{align*}

We have 
\begin{align*}
    \sumk \var( w_{[k]}^2 \frac{S_{[k]1}^2}{n_{[k]1}}\mid \mathcal D)\le\frac{1}{n^2} \max_{1\le k\le K}S_{[k]1}^4 \max_{1\le k\le K}  \var(\frac{\nk}{n_{[k]1}}\mid \mathcal D_1)=O\left(\frac 1 {n^3}\right).
\end{align*}
Similarly,
$$\sumk \var( w_{[k]}^2 \frac{S_{[k]0}^2}{n_{[k]0}}\mid \mathcal D)=O\left(\frac 1 {n^3}\right).$$
Therefore, the second term in the right-hand side of \eqref{ineq:post} is $O(1/n)$.

\subsection{Proof of Corollary \ref{coro:IV}}
\begin{proof}
We can choose 
    \begin{align*}
    A^0=\diag \left\{ \left(\begin{matrix}
  (R_{{[k]1}}-\bar R_{[k]})(Z_{{[k]1}}-\bar Z_{[k]}) & \cdots&(R_{{[k]1}}-\bar R_{[k]})(Z_{{[k]\nk}}-\bar Z_{[k]}) \\
   \vdots& &\vdots\\
(R_{{[k]\nk}}-\bar R_{[k]})(Z_{{[k]1}}-\bar Z_{[k]})&\cdots&(R_{{[k]\nk}}-\bar R_{[k]})(Z_{{[k]\nk}}-\bar Z_{[k]}) 
    \end{matrix}\right)\right\}_{k=1,\ldots,K},
        \end{align*}
   Then $A^\textup{s}=A^0/\sigma$, and
    \begin{align*}
            \sumk \frac 1 \nk\sumijk |a_{ij}^\textup{s}|^3=&\sumk \frac{1}{\nk\sigma^3}
        \left(\sumik |R_i-\bar R_{[k]}|^3\right)\left(\sumik |Z_i-\bar Z_{[k]}|^3\right)\\
        =& \frac 1 {\sigma^3}\sumk {\nk }\Mkthree(Z)\Mkthree(R).
            \end{align*}
        
\end{proof}

\subsection{Proof of Proposition \ref{prop:multi-scaling}}
\begin{proof}
We have 
    \begin{align*}
        V^{-1/2}(\Gamma-\E [\Gamma])
        =V^{-1/2}(\tr(G_1^0 P),\ldots,\tr(G_H^0 P))^\T
        =&V^{-1/2}\left(\begin{matrix}
            \vec(G_1^0)^\T\\
            \ldots\\
            \vec(G_H^0)^\T
        \end{matrix}\right)\cdot\vec (P) 
    \end{align*}
 By the definition of $\tilde G_h,h=1,\ldots,H$, it further equals   
    \begin{align*}
        V^{-1/2}(\Gamma-\E [\Gamma])= \left(\begin{matrix}
            \vec(\tilde G_1)^\T\\
            \ldots\\
            \vec(\tilde G_H)^\T
        \end{matrix}\right)\cdot\vec (P)
        =(\tr(\tilde G_1 P),\ldots,\tr(\tilde G_H P))^\T.
    \end{align*}

\end{proof}

\subsection{Proof of Proposition \ref{prop:cond_equ}}
\begin{proof}
We have $\E [\Gamma_h ]=\sumk \nk \bar g_{h,[k]\cdot \cdot},$
  where $\bar g_{h,[k]\cdot \cdot}=\nk^{-2} \sumijk g_{h,ij}$. From $G_h\in  M_n^\textup{s}$, we have $\bar g_{h,[k]\cdot j}=\nk^{-1}\sumik g_{h,ij}=0$ and  $\bar g_{h,[k] i \cdot }=\nk^{-1}\sumjk g_{h,ij}=0$. Therefore, $\E [\Gamma]=0$.
  We review a lemma to compute $\cov(\sumik g_{h,i\pi(i)},\sumik g_{l,i\pi(i)})$.
  \begin{lemma}[Lemma S1(ii) in \citet{shi2022berry}]\label{lemma:shi2022berry lemmaS1 ii}
      For $\pi\sim \textup{Uniform}(\Pi_K)$ and $G_h,G_l\in M_n^\textup{s}$, we have    $$\cov\left(\sumik g_{h,i\pi(i)},\sumik g_{l,i\pi(i)}\right)=\frac{1}{\nk -1} \sumijk g_{h, ij}g_{l,ij}.$$
  \end{lemma}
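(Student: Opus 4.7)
The plan is to mirror the variance calculation in Proposition~\ref{prop:var}, but now for a cross-covariance between two matrices. First I would exploit the stratified structure of $\pi$: since $\pi\sim \textup{Uniform}(\Pi_K)$ acts independently on each stratum, the random variables $\{\sumik g_{h,i\pi(i)}\}_{k=1}^K$ and $\{\sumik g_{l,i\pi(i)}\}_{k=1}^K$ decouple across strata. Concretely, for $k\neq k'$, the restriction of $\pi$ to $\mathcal I_{[k]}$ is independent of its restriction to $\mathcal I_{[k']}$, so the cross-stratum covariances vanish, and the global covariance reduces to a sum of within-stratum covariances. This is exactly the decomposition appearing between the two equalities in the statement.

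Next I would compute the within-stratum covariance. Fix a stratum $k$ and expand
\[
\cov\left(\sumik g_{h,i\pi(i)},\sumik g_{l,i\pi(i)}\right)=\sum_{i\in\Ik}\cov\left(g_{h,i\pi(i)},g_{l,i\pi(i)}\right)+\sum_{\substack{i,i'\in\Ik\\ i\neq i'}}\cov\left(g_{h,i\pi(i)},g_{l,i'\pi(i')}\right).
\]
Because $G_h,G_l\in M_n^\textsc{s}$ have vanishing row and column means within each stratum, the one-dimensional marginals satisfy $\E g_{h,i\pi(i)}=\nk^{-1}\sumjk g_{h,ij}=0$, so the covariances coincide with the corresponding expectations. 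Using $\pr(\pi(i)=j)=1/\nk$ gives the diagonal contribution
\[
\sum_{i\in\Ik}\E\left[g_{h,i\pi(i)}g_{l,i\pi(i)}\right]=\frac{1}{\nk}\sumijk g_{h,ij}g_{l,ij},
\]
and $\pr(\pi(i)=j,\pi(i')=j')=1/\{\nk(\nk-1)\}$ for $i\neq i'$, $j\neq j'$ gives the off-diagonal contribution
\[
\frac{1}{\nk(\nk-1)}\sum_{\substack{i\neq i'\\ j\neq j'}}g_{h,ij}g_{l,i'j'}.
\]

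The final step is to collapse the off-diagonal double sum using the zero row/column sum property. I would rewrite $\sum_{i'\neq i}g_{l,i'j'}=-g_{l,ij'}$ (since column sums in $G_l$ are zero), then $\sum_{j'\neq j}(-g_{l,ij'})=g_{l,ij}$ (since row sums in $G_l$ are zero), so the off-diagonal part reduces to $\{\nk(\nk-1)\}^{-1}\sumijk g_{h,ij}g_{l,ij}$. Adding the diagonal and off-diagonal pieces yields
\[
\left\{\frac{1}{\nk}+\frac{1}{\nk(\nk-1)}\right\}\sumijk g_{h,ij}g_{l,ij}=\frac{1}{\nk-1}\sumijk g_{h,ij}g_{l,ij},
\]
which is the claimed identity. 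I expect no serious obstacle here: the argument is a direct bilinearization of Proposition~\ref{prop:var}, and the only care needed is to track that zero row sums in $G_l$ alone (not in $G_h$) suffice for the telescoping, so the asymmetric formulation does not require any further symmetrization.
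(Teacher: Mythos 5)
Your proof is correct and is essentially the bilinear (polarized) version of the computation the paper already gives for Proposition \ref{prop:var} (the case $G_h=G_l$): the paper does not reprove this lemma but cites it from \citet{shi2022berry}, and your stratum-by-stratum decoupling followed by the telescoping of the off-diagonal sum via the zero row and column sums is exactly the mechanism used in that variance calculation. The only cosmetic point is that the lemma as stated omits the $\sumk$ in the first equality; your reading (cross-stratum covariances vanish, so the total covariance is the sum of within-stratum covariances) matches how the lemma is actually applied in the proof of Proposition \ref{prop:cond_equ}.
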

  By Lemma \ref{lemma:shi2022berry lemmaS1 ii} and $\bar g_{h,[k]\cdot j}=\bar g_{h,[k] i \cdot }=0$, we have 
    \begin{align*}
        \cov\left(\tr( G_h P), \tr( G_l P)\right) 
     =&  \sumk\cov\left( \sumik g_{h,i \pi(i)} , \sumik g_{l,i \pi(i)} \right)\\
       =&  \sumk \frac{1}{\nk -1} \sumijk g_{h, ij}g_{l,ij}\\
       =& \langle G_h, G_l\rangle_K.
    \end{align*}
Under Condition \ref{cond:multibeb}, we have $\cov (\Gamma)=I_H$.
\end{proof} 

\subsection{Proof of Corollary \ref{coro:multibeb}}
\begin{proof}
We have $$b^\T \Gamma =\sumh b_h \tr(G_h P)=\tr \left((\sumh b_h G_h)P \right).$$
Define $G^\dagger=\sumh b_h G_h$. From $G_h\in  M_n^\textup{s}$, we have that for $k=1,\ldots, K,$
$$\sumik g_{ij}^\dagger=\sumik\sumh b_h g_{h, ij}=0,\quad \sumjk g_{ij}^\dagger=\sumjk\sumh b_h g_{h, ij}=0.$$
Then 
\begin{align*}
    \var (W_{G^\dagger,\pi})=& \sumk \frac{1}{\nk-1}\sumijk (g_{ij}^\dagger)^2
    = \langle G^\dagger,G^\dagger\rangle_K
    = \sum_{1\le h,l\le K} b_h b_l \langle G_h, G_l \rangle_K 
    =\sumh  b_h^2 =1.
\end{align*}
That is, $G^\dagger\in  M_n^\textup{s}$. By Theorem \ref{theorem:beb},
there exists a universal constant $C$ such that  
$$\sup_{t\in \mathbb R} |\pr (b^\T \Gamma\le t)-\Phi (t)|\le C \sumk\sumijk \frac{|g_{ij}^\dagger|^3}{\nk}.$$

Because $G^\dagger\in M_n^\textup{s}$, we have $$\sumk\frac{1}{\nk-1}\sumijk (g_{ij}^\dagger)^2=1.$$
Therefore, we have
$$\sumk\sumijk \frac{|g_{ij}^\dagger|^3}{\nk}\le \sumk\sumijk \frac{|g_{ij}^\dagger|^2}{\nk-1}\max_{1\le k\le K}\max_{i,j\in \Ik} |g_{ij}^\dagger|=\max_{1\le k\le K}\max_{i,j\in \Ik} |g_{ij}^\dagger|.$$
\end{proof}

\subsection{Proof of Theorem \ref{thm:convexBEB}}
First, we review Stein coupling.
\begin{definition}(Stein coupling \citep{chen2010stein})
    A triple of square integrable $H$-dimensional random vectors $(\Gamma,\Gamma',X)$ is called an $H$-dimensional Stein coupling if $$\E[X^\T f(\Gamma')-X^\T  f(\Gamma)]=\E [X^\T f(\Gamma)]$$
    for all $f:\mathbb R^H\rightarrow \mathbb R^H$ provided that the expectations exist.
\end{definition}

Recall that we choose a random index $B$ with $\pr(B=k)=(\nk-1)/(n-K)$, swap two randomly selected
 indexes $I,J$ within $\mathcal I_{[B]}$ and obtain $\pi''$. Define $\Gamma'= (W_{G_1,\pi''}, W_{G_2,\pi''},\ldots, W_{G_H,\pi''})$. From Proposition \ref{prop:stein pair}, $\E[\Gamma'-\Gamma\mid \Gamma]=-(n-K) \Gamma/2$. The following lemma connects the Stein pair with the Stein coupling.
\begin{lemma}[Remark 2.3 of \cite{fang2015rates}]\label{lemma:connect pair coupling} If $(\Gamma,\Gamma')$ is an exchangeable pair and $\E[\Gamma'-\Gamma\mid \Gamma]=-\Lambda \Gamma$ for some invertible $\Lambda$, then $(\Gamma,\Gamma',\Lambda^{-1}(\Gamma'-\Gamma)/2)$ is a Stein coupling.  
\end{lemma}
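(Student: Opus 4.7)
The plan is to verify the Stein coupling identity directly by combining two ingredients: the exchangeability of $(\Gamma,\Gamma')$ and the linear regression assumption $\E(\Gamma'-\Gamma\mid \Gamma)=-\Lambda\Gamma$. Writing $X=\Lambda^{-1}(\Gamma'-\Gamma)/2$, I need to show that for every smooth $f:\mathbb R^H\to\mathbb R^H$,
$$
\E\bigl\{X^\T f(\Gamma')-X^\T f(\Gamma)\bigr\}=\E\bigl\{\Gamma^\T f(\Gamma)\bigr\},
$$
i.e., the defining identity of a Stein coupling.

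First I would exploit exchangeability. Since $(\Gamma,\Gamma')\deq(\Gamma',\Gamma)$, applying the swap to the integrand $(\Gamma'-\Gamma)^\T\Lambda^{-\T}f(\Gamma')$ yields
$$
\E\bigl[(\Gamma'-\Gamma)^\T\Lambda^{-\T}f(\Gamma')\bigr]=\E\bigl[(\Gamma-\Gamma')^\T\Lambda^{-\T}f(\Gamma)\bigr]=-\E\bigl[(\Gamma'-\Gamma)^\T\Lambda^{-\T}f(\Gamma)\bigr],
$$
so that $\E[X^\T f(\Gamma')]=-\E[X^\T f(\Gamma)]$ and consequently
$$
\E\bigl\{X^\T f(\Gamma')-X^\T f(\Gamma)\bigr\}=-2\,\E\bigl\{X^\T f(\Gamma)\bigr\}=-\E\bigl[(\Gamma'-\Gamma)^\T\Lambda^{-\T}f(\Gamma)\bigr].
$$

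Second I would condition on $\Gamma$ to remove $\Gamma'$. Using the tower property and the hypothesis $\E(\Gamma'-\Gamma\mid\Gamma)=-\Lambda\Gamma$,
$$
\E\bigl[(\Gamma'-\Gamma)^\T\Lambda^{-\T}f(\Gamma)\bigr]=\E\bigl[\E(\Gamma'-\Gamma\mid\Gamma)^\T\Lambda^{-\T}f(\Gamma)\bigr]=-\E\bigl[\Gamma^\T\Lambda^\T\Lambda^{-\T}f(\Gamma)\bigr]=-\E\bigl[\Gamma^\T f(\Gamma)\bigr],
$$
where the last step uses invertibility of $\Lambda$ (hence of $\Lambda^\T$) to cancel $\Lambda^\T\Lambda^{-\T}=I$. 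Plugging this back gives the required identity.

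There is no real obstacle here: the argument is a two-line manipulation once one spots that exchangeability collapses $\E[X^\T f(\Gamma')-X^\T f(\Gamma)]$ to $-2\E[X^\T f(\Gamma)]$, after which the linear conditional-mean assumption replaces $\Gamma'-\Gamma$ by $-\Lambda\Gamma$ and the $\Lambda^{-1}$ in $X$ cancels. The only point demanding a bit of care is keeping track of transposes so that $\Lambda^\T\Lambda^{-\T}$ (and not $\Lambda^{-1}\Lambda$) appears, which is why the factor $\Lambda^{-1}$ is placed on the \emph{left} inside $X$. The square-integrability assumption in the definition, together with any mild regularity on $f$ needed to justify Fubini, ensures all expectations exist.
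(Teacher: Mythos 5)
Your proof is correct. The paper itself does not prove this lemma---it simply cites Remark 2.3 of Fang and R\"{o}llin (2015)---so there is no in-paper argument to compare against; your two-step verification (antisymmetrizing $\E[X^\T f(\Gamma')]$ via exchangeability to get $-2\E[X^\T f(\Gamma)]$, then applying the tower property and the linearity condition so that $\Lambda^\T\Lambda^{-\T}$ cancels) is exactly the standard derivation and fills in the omitted details cleanly. One remark: you prove the identity $\E\{X^\T f(\Gamma')-X^\T f(\Gamma)\}=\E\{\Gamma^\T f(\Gamma)\}$, which is the correct definition of a Stein coupling from Chen and R\"{o}llin (2010); the definition as typeset in the paper has $\E\{X^\T f(\Gamma)\}$ on the right-hand side, which is evidently a typo, and you were right to target the standard identity rather than the misprinted one.
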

From Lemma \ref{lemma:connect pair coupling}, $(\Gamma,\Gamma',-{(n-K)}(\Gamma-\Gamma')/4)$ is a Stein coupling.
Define $D=\Gamma-\Gamma'$ and denote $D=(D_1,\ldots,D_H)^\T$ and $X=-(n-K)D/4$. Lemma \ref{lemma:compute} follows from the result in the proof of Theorem S2 of \citet{shi2022berry}, which holds for $k=1,\ldots,K$. 

\begin{lemma}\label{lemma:compute}
   (i) There exists a universal constant $C$, such that for any $k=1,\ldots, K$ and $h,l,m\in\{1,\ldots,H\}$, 
\begin{align*}
    \var\{\E[D_h^2\mid\Gamma,B=k]\}&\le \frac{CB_n^4}{\nk},\\
    \var\{\E[D_h D_l\mid \Gamma,B=k]\}&\le \frac{CB_n^4}{\nk},\\
    \var\{\E[D_h D_l D_m\mid \Gamma,B=k]\}&\le \frac{CB_n^6}{\nk}.
\end{align*}

(ii) \citep[Lemma 8]{chatterjee2007multivariate} For any $k=1,\ldots,K$ and $h=1,\ldots,H$,
\begin{align*}
   \E[(\Gamma_h-\Gamma_h')^2\mid \pi,B=k]=&\frac{2(\nk+1)}{\nk(\nk-1)}\sumik g_{h,i\pi(i)}^2+\frac{2}{\nk}+\frac{2}{\nk(\nk-1)}\Gamma_{h[k]}^2\\
    &+\frac{2}{\nk(\nk-1)}\sum_{i,j\in\Ik,i\neq j} g_{h,i\pi(j)}g_{h,j\pi(i)}.
\end{align*}

\end{lemma}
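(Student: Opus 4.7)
My plan is to dispose of part (ii) by direct algebraic expansion and then use the identity in (ii) to reduce part (i) to bounding the variance of an explicit function of $\pi$ restricted to stratum $k$. The key resources are the entrywise bound $|g_{h,ij}| \le B_n$, the marginal constraints $\sumjk g_{h,ij} = \sumik g_{h,ij} = 0$ coming from $G_h \in M_n^\textsc{s}$, the variance formula in Proposition~\ref{prop:var}, and the conditional variance contraction $\sigma(\Gamma) \subseteq \sigma(\pi)$.

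For part (ii), conditional on $B=k$ the pair $(I,J)$ is uniform on ordered distinct pairs in $\Ik$, and given $\pi$ we have $\Gamma_h - \Gamma_h' = g_{h,I\pi(I)} + g_{h,J\pi(J)} - g_{h,I\pi(J)} - g_{h,J\pi(I)}$. I would square this, take the conditional average $\frac{1}{\nk(\nk-1)}\sum_{i \neq j \in \Ik}$ of the squared four-term expression, and expand the cross-products, simplifying using the marginal zero-sum identities: $\sum_{j \neq i} g_{h,i\pi(j)} = -g_{h,i\pi(i)}$ gives $\sum_{i \neq j} g_{h,i\pi(i)} g_{h,i\pi(j)} = -\sumik g_{h,i\pi(i)}^2$, and $\sum_{i \neq j} g_{h,i\pi(i)} g_{h,j\pi(j)} = \Gamma_{h[k]}^2 - \sumik g_{h,i\pi(i)}^2$. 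Collecting the surviving pieces reproduces the four-term identity, matching Lemma~8 of \citet{chatterjee2007multivariate}.

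For part (i), the conditional variance contraction gives $\var\{\E(D_h^2 \mid \Gamma, B=k)\} \le \var\{\E(D_h^2 \mid \pi, B=k)\}$, and by (ii) the right-hand side is the variance of a function of $\pi|_{\Ik}$ with four pieces, which I would bound term by term. The constant piece contributes nothing. The diagonal statistic $\sumik g_{h,i\pi(i)}^2$ is linear in $\pi$ (a permutation statistic for the matrix with entries $g_{h,ij}^2$); Proposition~\ref{prop:var} bounds its variance by $\sumijk g_{h,ij}^4/(\nk-1) = O(\nk B_n^4)$, contributing $O(B_n^4/\nk)$ after squaring the prefactor $2(\nk+1)/[\nk(\nk-1)]$. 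For the $\Gamma_{h[k]}^2$ term, $\E \Gamma_{h[k]}^4 = O(\nk^2 B_n^4)$ (from $\var \Gamma_{h[k]} = O(\nk B_n^2)$ via Proposition~\ref{prop:var} combined with a standard fourth-moment bound for bounded-matrix permutation statistics), contributing $O(B_n^4/\nk^2)$ after the prefactor $2/[\nk(\nk-1)]$.

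The main obstacle is the genuinely quadratic-in-$\pi$ term $\sum_{i \neq j \in \Ik} g_{h,i\pi(j)} g_{h,j\pi(i)}$. I would control it by a bounded-differences argument: the transposition $\pi \mapsto \pi \circ \tau_{st}$ alters only the $O(\nk)$ summands whose index set meets $\{s,t\}$, and each such alteration is $O(B_n^2)$, so the swap increment of the full sum is $O(\nk B_n^2)$; combining with the Poincaré inequality for the random-transposition walk on the symmetric group (whose spectral gap is of order $1/\nk$) yields $O(\nk B_n^4)$ variance for the sum, hence $O(B_n^4/\nk)$ after the $2/[\nk(\nk-1)]$ prefactor. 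The bound for $\var\{\E(D_h D_l \mid \Gamma, B=k)\}$ then follows by polarization, $D_h D_l = \tfrac{1}{4}[(D_h+D_l)^2 - (D_h-D_l)^2]$, applied to the single-index bound. For the cubic term $D_h D_l D_m$, I would redo the expansion strategy of part (ii): the analogous identity has the same structural pieces but acquires one extra $g$-entry factor, which produces an extra $B_n^2$ in the variance bound and delivers the target $O(B_n^6/\nk)$.
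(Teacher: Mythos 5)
Your argument is correct in substance, but it takes a genuinely different route from the paper: the paper does not actually prove this lemma --- part (ii) is imported verbatim from Lemma 8 of Chatterjee and Meckes, and part (i) is delegated to the moment calculations in the proof of Theorem S2 of \citet{shi2022berry}, which bound each $\var\{\E(D_hD_l\mid\Gamma,B=k)\}$ by explicit combinatorial expansion of the conditional moments. Your self-contained derivation of (ii) by squaring and invoking the zero row/column sums is exactly right (it even shows the constant term is really $2\sumijk g_{h,ij}^2/\{\nk(\nk-1)\}$, which equals $2/\nk$ only under the per-stratum normalization $\sumijk g_{h,ij}^2=\nk-1$; since that term is deterministic it is irrelevant for part (i)). Your part (i) --- conditional Jensen to pass from $\sigma(\Gamma)$ to $\sigma(\pi)$, Proposition \ref{prop:var} for the linear pieces, a transposition-walk Poincar\'e inequality for the quadratic piece, and polarization for $D_hD_l$ --- is more modular than the citation chain and does deliver the stated bounds.

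Two small repairs. First, your intermediate claim that $\var\{\sum_{i\neq j}g_{h,i\pi(j)}g_{h,j\pi(i)}\}=O(\nk B_n^4)$ is inconsistent with your own computation: a swap increment of order $\nk B_n^2$ together with a spectral gap of order $1/\nk$ gives $O(\nk^3B_n^4)$, and that is the bound you actually need, since multiplying by the squared prefactor $4/\{\nk(\nk-1)\}^2$ then yields $O(B_n^4/\nk)$; with $O(\nk B_n^4)$ you would get $O(B_n^4/\nk^3)$, so the slip is harmless but the bookkeeping should be corrected. Second, the cubic case is only sketched, and the $64$-term expansion produces term types (e.g.\ products of two linear statistics) not present in (ii). Rather than classifying them, apply your own bounded-difference idea once to the whole object: $F(\pi)=\E(D_{h}D_{l}D_{m}\mid\pi,B=k)=\{\nk(\nk-1)\}^{-1}\sum_{i\neq j}\prod_{r\in\{h,l,m\}}\delta_r(i,j,\pi)$ with $|\delta_r|\le 4B_n$ changes by $O(B_n^3/\nk)$ under a single transposition (only $O(\nk)$ summands are affected), so the Poincar\'e inequality gives $\var(F)=O(B_n^6/\nk)$ in one line; the same argument applied with two or one factors recovers the quadratic bounds and makes the expansion in (ii) unnecessary for part (i).
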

We now prove Theorem \ref{thm:convexBEB}.

\begin{proof}[Proof of Theorem \ref{thm:convexBEB}]
Recall $D=\Gamma-\Gamma'$. We review a lemma for Stein's coupling and normal approximation as follows.
\begin{lemma}[Theorem 2.1 of \citet{fang2015rates}]\label{lemma:fang2015 thm2.1}
Let $\left(\Gamma, \Gamma^{\prime}, G\right)$ be a $H$-dimensional Stein coupling. Assume $\operatorname{Cov}(\Gamma)=I_H$. Let $\xi_H$ be an $H$-dimensional standard Normal random vector. With $D=\Gamma^{\prime}-\Gamma$, suppose that there are positive constants $\alpha$ and $\beta$ such that $\|X\|_2 \leq \alpha$ and $\|D\|_2 \leq \beta$. Then there exists a universal constant $C$, such that
$$
\begin{aligned}
& \sup _{V \in \mathcal{C}}\left|\mathbb{P}\{\Gamma \in V\}-\mathbb{P}\left\{\xi_H \in V\right\}\right| \\
& \leq C\left(H^{7 / 4} \alpha \mathbb{E}\|D\|_2^2+H^{1 / 4} \beta+H^{7 / 8} \alpha^{1 / 2} \mathcal B_1^{1 / 2}+H^{3 / 8} \mathcal B_2+H^{1 / 8} \mathcal B_3^{1 / 2}\right)
\end{aligned}
$$
where
\begin{eqnarray*}
\mathcal B_1^2&=&\operatorname{Var}\left\{\mathbb{E}\left(\|D\|_2^2 \mid \Gamma\right)\right\}, \\
\mathcal B_2^2&=&\sum_{h=1}^H \sum_{l=1}^H \operatorname{Var}\left\{\mathbb{E}\left(X_h D_l \mid \Gamma\right)\right\}, \\
\mathcal B_3^2&=&\sum_{h=1}^H \sum_{l=1}^H \sum_{m=1}^H \operatorname{Var}\left\{\mathbb{E}\left(X_h D_l D_m \mid \Gamma\right)\right\}.
\end{eqnarray*}

\end{lemma}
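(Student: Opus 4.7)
The plan is to run the standard multivariate smoothed Stein's-method scheme tailored to convex sets. First I would smooth the indicator $\mathbf{1}_A$ by convolving with a centered Gaussian density of bandwidth $\epsilon>0$, producing $h_{A,\epsilon}$, and use the Gaussian anti-concentration near the boundary of a convex set to bound $|\mathbb{E}[h_{A,\epsilon}(\xi_H)-\mathbf{1}_A(\xi_H)]|\lesssim \epsilon \sqrt{H}$, reducing the task to bounding $|\mathbb{E}h_{A,\epsilon}(\Gamma)-\mathbb{E}h_{A,\epsilon}(\xi_H)|$. I would then solve the multivariate Stein equation $\Delta g(x)-x^{\T}\nabla g(x)=h_{A,\epsilon}(x)-\mathbb{E}h_{A,\epsilon}(\xi_H)$ via the Ornstein--Uhlenbeck semigroup representation $g(x)=-\int_0^\infty[\mathbb{E}h_{A,\epsilon}(e^{-t}x+\sqrt{1-e^{-2t}}\,\xi_H)-\mathbb{E}h_{A,\epsilon}(\xi_H)]\,dt$, and extract the derivative bounds $\|\partial^k g\|_\infty\lesssim \epsilon^{-(k-1)/2}H^{(k-1)/4}$ for $k=2,3$ from explicit differentiation under the integral and Gaussian smoothing.

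The second step is to apply the Stein coupling identity $\mathbb{E}[X^{\T}(f(\Gamma')-f(\Gamma))]=\mathbb{E}[X^{\T}f(\Gamma)]$ with $f=\nabla g$, and Taylor-expand $\nabla g(\Gamma')=\nabla g(\Gamma+D)$ about $\Gamma$. Writing $\nabla g(\Gamma')-\nabla g(\Gamma)=\nabla^2 g(\Gamma)D+R_3$ with cubic remainder $R_3$, and using the Stein PDE to replace $\Gamma^{\T}\nabla g(\Gamma)-\Delta g(\Gamma)=\mathbb{E}h_{A,\epsilon}(\xi_H)-h_{A,\epsilon}(\Gamma)$, I obtain
\begin{equation*}
\mathbb{E}h_{A,\epsilon}(\Gamma)-\mathbb{E}h_{A,\epsilon}(\xi_H)=\mathbb{E}\Bigl\{\Delta g(\Gamma)-\sum_{h,l}X_h D_l\partial^2_{hl}g(\Gamma)\Bigr\}-\mathbb{E}[X^{\T}R_3].
\end{equation*}
The remainder contributes $\lesssim \alpha\,\mathbb{E}\|D\|_2^2\,\|\nabla^3 g\|_\infty\lesssim H^{7/4}\alpha\,\mathbb{E}\|D\|_2^2/\epsilon$, giving (after optimization) the first term in the stated bound, while the smoothing error gives the $H^{1/4}\beta$ term.

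The third step is to match the bilinear term with $\Delta g$ and extract the three variance quantities. Decomposing $\sum_{h,l}X_h D_l\partial^2_{hl}g(\Gamma)=\sum_{h,l}\mathbb{E}(X_h D_l\mid\Gamma)\partial^2_{hl}g(\Gamma)+\sum_{h,l}(X_h D_l-\mathbb{E}(X_h D_l\mid\Gamma))\partial^2_{hl}g(\Gamma)$, the second piece is handled by Cauchy--Schwarz against $\|\partial^2 g\|_2$ and produces the $H^{3/8}\mathcal{B}_2$ term. Applying the Stein coupling identity once more with the linear function $f(x)=x$ shows $\mathbb{E}[X^{\T}D\mid\Gamma]$ is close to $-\tfrac12\mathbb{E}[\|D\|_2^2\mid\Gamma]$ in $L^2$, so the diagonal of the first piece matches $\tfrac12\Delta g(\Gamma)\cdot\mathbb{E}[\|D\|_2^2\mid\Gamma]/1$, and comparing $\mathbb{E}[\|D\|_2^2\mid\Gamma]$ to $\mathbb{E}\|D\|_2^2$ introduces $\mathcal{B}_1$ and yields the $H^{7/8}\alpha^{1/2}\mathcal{B}_1^{1/2}$ term. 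To recover the $H^{1/8}\mathcal{B}_3^{1/2}$ term and avoid a crude bound on $R_3$, I would write the exact cubic remainder as $\int_0^1 (1-s)^2 \sum_{h,l,m} X_h D_l D_m \partial^3_{hlm}g(\Gamma+sD)\,ds$, condition on $\Gamma$ inside, and invoke $\mathcal{B}_3$ via Cauchy--Schwarz.

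The main obstacle will be the sharp $H$-dependent regularity estimates for the Stein solution $g$ with smoothed-convex-set right-hand side: obtaining the correct polynomial-in-$H$ coefficients in $\|\partial^k g\|_\infty$ and $\|\partial^k g\|_2$ requires careful integration of the OU semigroup kernel together with Gaussian boundary concentration for convex sets, and these exponents drive the final powers $H^{7/4},H^{7/8},H^{3/8},H^{1/8}$ after choosing $\epsilon$ to balance the smoothing error against the cubic Taylor remainder.
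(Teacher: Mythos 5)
The paper does not prove this lemma: it is imported verbatim as Theorem 2.1 of \citet{fang2015rates} and used as a black box in the proof of Theorem \ref{thm:convexBEB}, so there is no internal proof to compare against. Your sketch does follow the broad architecture of Fang's actual argument (Bentkus-type smoothing of indicators of convex sets, the Stein-coupling identity, a second-order Taylor expansion with the cubic remainder conditioned on $\Gamma$, and Cauchy--Schwarz against the conditional-variance quantities $\mathcal B_1,\mathcal B_2,\mathcal B_3$), so the plan is pointed in the right direction.

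However, as written the plan has concrete gaps. First, your smoothing estimate $|\E[h_{A,\epsilon}(\xi_H)-\bold 1_A(\xi_H)]|\lesssim \epsilon\sqrt{H}$ is too weak: the term $H^{1/4}\beta$ in the conclusion comes precisely from Ball's bound $\gamma_H(A^{\epsilon}\setminus A)\le C H^{1/4}\epsilon$ on the Gaussian measure of the $\epsilon$-boundary of a convex set, and with $\sqrt{H}\epsilon$ you can only reach $H^{1/2}\beta$, which does not match the statement. Second, the matching of $\sum_{h,l}X_hD_l\,\partial^2_{hl}g(\Gamma)$ with $\Delta g(\Gamma)$ rests on the identity $\E[X_hD_l]=\E[\Gamma_h\Gamma_l]=\delta_{hl}$ obtained by applying the Stein-coupling definition to linear test functions; $\mathcal B_2$ measures the fluctuation of $\E[X_hD_l\mid\Gamma]$ around this identity matrix. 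Your route via ``$\E[X^{\T}D\mid\Gamma]$ is close to $-\tfrac12\E[\|D\|_2^2\mid\Gamma]$'' has the wrong sign and normalization (for the exchangeable-pair coupling used here, $X^{\T}D=\tfrac{n-K}{4}\|D\|_2^2\ge 0$ and $\E[X^{\T}D]=H$), and it conflates the roles of $\mathcal B_1$ and $\mathcal B_2$. Finally, the dimension-dependent derivative bounds $\|\partial^k g\|_\infty\lesssim\epsilon^{-(k-1)/2}H^{(k-1)/4}$ are asserted rather than derived, and these are exactly what drives the exponents $H^{7/4},H^{7/8},H^{3/8},H^{1/8}$; you acknowledge this as the main obstacle, but without them the sketch does not yield the stated inequality. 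Since the lemma is an external citation, none of this is needed for the paper itself.
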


Now, we will upper bound $\|X\|_2,\|D\|_2, \mathcal B_i,i=1,2,3$, respectively.
\paragraph{Step 1. Bound $\|X\|_2,\|D\|_2$}
Since $D_h=g_{h,I \pi(i)}+g_{h,J\pi(j)}-g_{h,I\pi(j)}-g_{h,J\pi(i)}$, we can obtain that $|D_h|\le 4 B_n$, then $|X_h|=(n-K)|D_h|/4\le (n-K)B_n$. Then $\|D\|_2\le 4\sqrt{ H} B_n$ and $\|X\|_2\le (n-K)\sqrt{H}B_n$.

\paragraph{Step 2. Bound $\E[\|D\|_2^2]$}
We first upper bound $\E[\|D\|_2^2\mid \pi]$.
 We have
\begin{align}\label{eq:thm fang Step2}
    &\E[\|D\|_2^2\mid \pi]=\sumh \E[D_h^2\mid \pi]=\sumh \E[(\Gamma_h-\Gamma'_h)^2\mid \pi].
    \end{align}
    By Lemma \ref{lemma:compute}(ii), \eqref{eq:thm fang Step2} equals
    \begin{align*}        
&\sumh \sumk\pr(B=k)\E[(\Gamma_h-\Gamma'_h)^2\mid \pi,B=k]\\
    =& \sumh \Big[\sumk \frac{\nk-1}{n-K}\Big\{\frac{2(\nk+1)}{\nk(\nk-1)}\sumik g_{h,i\pi(i)}^2+\frac 2 \nk +\frac 2 {\nk(\nk-1)} \Gamma_{h[k]}^2\\
    &~~+\frac{2}{\nk(\nk-1)}\sum_{i,j\in \Ik,i\neq j}g_{h,i\pi(j)} g_{h,j\pi(i)} \Big\}\Big]\\
    \le & \frac{1}{n-K}\sumh \left[\sumk 2(\nk+1) B_n^2+\sumk \frac {2(\nk-1)} \nk +\sumk \frac 2 {\nk} (\nk B_n)^2 +\sumk 2(\nk-1)B_n^2\right].
    \end{align*}
    Therefore, we have
    \begin{align*}
   \E[\|D\|_2^2\mid \pi] \le & \frac{1}{n-K}\sumh \left[ 2(n+K) B_n^2+2K +2 n  B_n^2 +2 (n-K)  B_n^2\right]\\
    =&\frac {2H}{n-K}(3nB_n^2+K).
\end{align*}
Furthermore, by the law of iterated expectation, we have 
$$\E [\|D\|_2^2]=\E [\|D\|_2^2\mid \pi]\le \frac {2H}{n-K}(3nB_n^2+K).$$

\paragraph{Step 3. Bound $\mathcal  B_1^2$} 
By the Cauchy-Schwarz inequality, we have
\begin{align}\label{eq:thm fang Step3}
   \mathcal  B_1^2=\var\{\E [\|D\|_2^2\mid \Gamma]\}= \var\{\E[D_h^2\mid \Gamma]\}
    \le  H\sumh \var\{\E[D_h^2\mid \Gamma]\}.
    \end{align}
   Using the law of total probability and Lemma \ref{lemma:compute}(i), the right-hand side of \eqref{eq:thm fang Step3} equals
    \begin{align*}
     & H\sumh \var\left\{\sumk \frac{\nk-1}{n-K}\E[D_h^2\mid \Gamma,B=k]\right\}\\
    = & H\sumh \sumk (\frac{\nk-1}{n-K})^2\var\{\E[D_h^2\mid \Gamma,B=k]\}\\
     \le & H^2 \sumk (\frac{\nk-1}{n-K})^2 C\frac{B_n^4}{\nk}.
    \end{align*}
   Combining these results, we have 
    \begin{align*}
  \mathcal  B_1^2  \le  H^2\sumk \frac{\nk-1}{(n-K)^2} CB_n^4= \frac{CH^2B_n^4}{n-K}.
\end{align*}
\paragraph{Step 4. Bound $\mathcal  B_2^2$} 
Recall $X=-(n-K)D/4$ in the remark of Lemma \ref{lemma:connect pair coupling}. We have
\begin{align}\label{eq:thm fang Step4}
   \mathcal  B_2^2=&\sumh\sum_{l=1}^H \var\{\E [X_h D_l\mid \Gamma]\}=\left(\frac{n-K}{4}\right)^2\sumh\sum_{l=1}^H\var\{\E [D_h D_l\mid \Gamma]\}.
   \end{align}
Using the law of total probability and Lemma \ref{lemma:compute}(i), \eqref{eq:thm fang Step4} equals
\begin{align*}
    &\left(\frac{n-K}{4}\right)^2\sumh\sum_{l=1}^H\var\left\{\sumk \frac{\nk-1}{n-K}\E [D_h D_l\mid \Gamma,B=k]\right\}\\
    =&\left(\frac{n-K}{4}\right)^2\sumh\sum_{l=1}^H\sumk\left(\frac{\nk-1}{n-K}\right)^2\var\{ \E [D_h D_l\mid \Gamma,B=k]\}\\
    \le &\frac{1}{16}H^2\sumk (\nk-1)^2\frac{CB_n^4}{\nk}\\
    \le &CH^2(n-K)B_n^4.
\end{align*}
\paragraph{Step 5. Bound $\mathcal  B_3^2$} 
We have
\begin{align}\label{eq:thm fang Step5}
   \mathcal  B_3^2=\sumh \sum_{l=1}^H\sum_{m=1}^H \var(\E [X_h D_l D_m\mid \Gamma])
   = \sumh \sum_{l=1}^H\sum_{m=1}^H \left(\frac{n-K}{4}\right)^2\var\{\E [D_h D_l D_m\mid \Gamma]\}.
   \end{align}
   Using the law of total probability and Lemma \ref{lemma:compute} (i), \eqref{eq:thm fang Step5} equals
   \begin{align*}
   &\sumh \sum_{l=1}^H\sum_{m=1}^H \left(\frac{n-K}{4}\right)^2\var\left\{\sumk \frac{\nk-1}{n-K}\E[D_h D_l D_m\mid \Gamma,B=k]\right\}\\
   = & \sumh \sum_{l=1}^H\sum_{m=1}^H \left(\frac{n-K}{4}\right)^2\sumk\left(\frac{\nk-1}{n-K}\right)^2\var\{ \E[D_h D_l D_m\mid \Gamma,B=k]\}\\
   \le  & H^3 \frac{1}{16}\sumk (\nk-1)^2\frac{CB_n^6}{\nk}\\
   \le & CH^3(n-K)B_n^6.
\end{align*}

Therefore, we have
$$\|X\|_2\le \alpha C(n-K)\sqrt{H} B_n,\quad \|D\|_2\le C\sqrt{H}B_n,\quad \E[\|D\|_2^2]\le \frac{CH}{n-K}(nB_n^2+K),$$
$$\mathcal B_1\le CHB_n^2(n-K)^{-1/2},\quad \mathcal B_2\le CHB_n^2(n-K)^{1/2},\quad\mathcal B_3\le CH^{3/2}(n-K)^{1/2}B_n^3.$$

Applying Lemma \ref{lemma:fang2015 thm2.1}, we have  
\begin{align*}
   & \sup_{V\in \mathcal C}|\pr(\Gamma\in V)-\pr(\xi_H \in V)|\\
    \le& c(H^{7/4}\alpha \E[\|D\|_2^2]+H^{1/4}\beta+H^{7/8}\alpha^{1/2}\mathcal B_1^{1/2}+H^{3/8}\mathcal B_2+H^{1/8}B_3^{1/2})\\
    \le& C H^{13/4}B_n(nB_n^2+K)+CH^{3/4}B_n+CH^{13/8}(n-K)^{1/4}B_n^{3/2}+CH^{11/8}(n-K)^{1/2}B_n^2.
\end{align*}

\end{proof}

\section{Proof of the Results in Supplementary Material}\label{sec:proof-supp}

\subsection{Proof of Proposition~\ref{prop:stein pair}}

\begin{proof}
We start by examining the difference between $W_{A,\pi''}$ and $W_{A,\pi}$:
\begin{align*}
W_{A,\pi''}-W_{A,\pi} &= (a_{I\pi(j)}+a_{J\pi(i)})-(a_{i\pi(i)}+a_{J\pi(j)}).
\end{align*}

We calculate the conditional expectation as follows:
\begin{align*}
\E [W_{A,\pi''}-W_{A,\pi}\mid \pi] &= \sumk \pr(B=k)\E[W_{A,\pi''}-W_{A,\pi}\mid \pi,B=k]\\
&= \sumk\pr(B=k) \cdot 2\left(-\frac{1}{\nk}\sumik a_{i,\pi(i)}+\frac{1}{\nk(\nk-1)}\sum_{i,j\in\Ik ,i\neq j}a_{i,\pi(j)} \right).
\end{align*}
Because 
$$\sum_{i,j\in\Ik ,i\neq j}a_{i,\pi(j)}=\sumijk a_{ij}-\sumik a_{i,\pi(i)},$$
we have
\begin{align*}
    \E [W_{A,\pi''}-W_{A,\pi}\mid \pi]=&\sumk\frac{\nk-1}{n-K} \cdot 2\left(-\frac{1}{\nk-1}\sumik a_{i,\pi(i)}+\frac{1}{\nk(\nk-1)}\sumijk a_{ij} \right)\\
    =&-\frac{2}{n-K}W_{A,\pi}+\frac{2}{n-K}\mu_A.
\end{align*}

By the definition of random  transposition, $(W_{A,\pi},W_{A,\pi''})$ and $(W_{A,\pi''},W_{A,\pi})$ have the same distribution. Therefore, we conclude that $(W_{A,\pi},W_{A,\pi''})$ forms a $2/(n-K)$-Stein pair under $\mu_A=0$.
\end{proof}

\subsection{Proof of Theorem \ref{thm:bound for delta and B02}}

\subsubsection{Proof of Theorem \ref{thm:bound for delta and B02}(i)}
We have 
\begin{align*}
        &\E[|W_{A,\pi''}-W_{A,\pi}|^3]\\
    =&\sumk \pr(B=k)\E[|W_{A,\pi''}-W_{A,\pi}|^3\mid B=k]\\
    = & \sumk \frac{\nk-1}{n-K}\frac{1}{\nk(\nk-1)}\sumijk \E[|a_{i\pi(i)}+a_{j\pi(j)}-a_{i\pi(j)}-a_{j\pi(i)}|^3].
\end{align*}

From 
\begin{align*}
   &|a_{i\pi(j)}+a_{j\pi(i)}-a_{i\pi(i)}-a_{j\pi(j)}|^3\\
   \le& 16(|a_{i\pi(j)}|^3+|a_{j\pi(i)}|^3+|a_{i\pi(i)}|^3+|a_{j\pi(j)}|^3), 
\end{align*}
we have
\begin{align*}
    &\E[|W_{A,\pi''}-W_{A,\pi}|^3]\\
    \le &\sumk\frac{16}{\nk(n-K)} \sumijk \E\left[|a_{i\pi(i)}|^3+|a_{j\pi(j)}|^3+|a_{i\pi(j)}|^3+|a_{j\pi(i)}|^3\right]\\
    = & \sumk\frac{64}{\nk(n-K)} \sumijk |a_{ij}|^3\\
     \le & \frac{64}{n-K}  \sumk {\sumijk |a_{ij}|^3}/{\nk}.
\end{align*}
Therefore, 
\begin{align*}
    \frac{\E[|W_{A,\pi''}-W_{A,\pi}|^3]}{\lambda}\le32 \sumk {\sumijk |a_{ij}|^3}/{\nk}.
\end{align*}

\subsubsection{Proof of Theorem \ref{thm:bound for delta and B02}(ii)}
Define $$\delta=\E[|W_{A,\pi''}-W_{A,\pi}|^3]/\lambda,\quad B_0^2=\var\left(\E [ \frac{1}{2\lambda}(W_{A,\pi}'-W_{A,\pi})^2 \bold 1(|W_{A,\pi}'-W_{A,\pi}|\le \delta )\mid W_{A,\pi}]\right)$$ and
\begin{align*}
    \alpha_{ij}^\pi=\{a_{i\pi(i)}+a_{j\pi(j)}-a_{i\pi(j)}-a_{j\pi(i)}\}^2\bold {1}\{|a_{i\pi(i)}+a_{j\pi(j)}-a_{i\pi(j)}-a_{j\pi(i)}|\le \delta\}.
\end{align*}
Therefore, 
\begin{align*}
    \frac{1}{2\lambda}\E[(W_{A,\pi}'-W_{A,\pi})^2\bold {1}(|W_{A,\pi}'
-W_{A,\pi}|\le \delta)\mid \pi]
= \frac{n-K}{4}\E[ \alpha_{IJ}^\pi\mid \pi].
\end{align*}

The law of total probability implies
\begin{align*}
 \E[\alpha_{IJ}^\pi\mid \pi]
= \sumk\pr (B=k)\E[\alpha_{IJ}^\pi\mid \pi,B=k].
\end{align*}

Therefore, 
\begin{align}\label{eq:2lambda}
  &\frac{1}{2\lambda}\E[(W_{A,\pi}'-W_{A,\pi})^2\bold {1}(|W_{A,\pi}'
-W_{A,\pi}|\le \delta)\mid \pi]\notag\\
=&\frac{n-K}{4}\Big(\sumk \frac{\nk-1}{n-K}\frac{1}{\nk (\nk-1) } \sumijk   \alpha^\pi_{ij} \bold 1(i\neq j) \Big)\notag\\
=& \sumk \frac{1}{4\nk}\sumijk \alpha_{ij}^\pi \bold 1(i\neq j).
\end{align}
By \eqref{eq:2lambda}, we have
\begin{align*}
    B_0^2=&\var\left(\E \left[ \frac{1}{2\lambda}(W_{A,\pi}'-W_{A,\pi})^2 \bold 1(|W_{A,\pi}'-W_{A,\pi}|\le \delta )\mid W_{A,\pi}\right]\right)\\
    \le& \var\left(\E \left[ \frac{1}{2\lambda}(W_{A,\pi}'-W_{A,\pi})^2 \bold 1(|W_{A,\pi}'-W_{A,\pi}|\le \delta )\mid \pi\right]\right)\\
    =&\sumk \frac{1}{16\nk^2}\var\left\{\sumijk \alpha_{ij}^\pi \bold 1(i\neq j)\right\}\\
    =&\sumk\frac{1}{16\nk^2}\Big\{2 \sum_{i<j\in \Ik} \var(\alpha_{ij}^\pi)+ \sum_{i,i',j,j'\in \Ik,\atop i\neq j,i'\neq j',|\{i,i',j,j'\}|=3}\cov(\alpha_{ij}^\pi,\alpha_{i'j'}^\pi) \\
    &~~~~~~~+ \sum_{i,i',j,j'\in \Ik,\atop i\neq j,i'\neq j',|\{i,i',j,j'\}|=4}\cov(\alpha_{ij}^\pi,\alpha_{i'j'}^\pi) \Big\}\\
    =& \sumk (\mathcal T_{1[k]}+\mathcal T_{2[k]}+\mathcal T_{3[k]}),
\end{align*}
where 
\begin{align*}
\mathcal T_{1[k]} =& \frac{2}{16\nk^2}   \sum_{i<j\in \Ik} \var (\alpha_{ij}^\pi),\\
\mathcal T_{2[k]} =& \frac{1}{16\nk^2}   \sum_{i,i',j,j'\in \Ik,\atop i\neq j,i'\neq j',|\{i,i',j,j'\}|=3}\cov(\alpha_{ij}^\pi,\alpha_{i'j'}^\pi),\\
\mathcal T_{3[k]} =& \frac{1}{16\nk^2}   
  \sum_{i,i',j,j'\in \Ik,\atop i\neq j,i'\neq j',|\{i,i',j,j'\}|=4}\cov(\alpha_{ij}^\pi,\alpha_{i'j'}^\pi),
\end{align*}

Then 
\begin{align}\label{eq:T1}
    |\mathcal T_{1[k]}|\le & \frac{1}{8\nk^2}\sumijk \delta \E[|a_{i\pi(i)}+a_{j\pi(j)}-a_{i\pi(j)}-a_{j\pi(i)}|^3]\notag\\
    \le & \frac{8\delta}{\nk^2}\sumijk  |a_{ij}|^3.
\end{align}

We use $\cov(X,Y)\le (\var(X)+\var(Y))/2$ to show that under the restriction that $i\neq j$, $i'\neq j'$, and $|\{i,j,i'j'\}|=3$, we have 
\begin{align}\label{eq:T2}
    |\mathcal T_{2[k]}|\le & \frac{\delta}{16\nk^2} \sum_{i,i',j,j'\in \Ik,\atop i\neq j,i'\neq j',|\{i,i',j,j'\}|=3}\E[|a_{i\pi(i)}+a_{j\pi(j)}-a_{i\pi(j)}-a_{j\pi(i)}|^3]\notag\\
    \le & \frac{\delta}{\nk^2} \sum_{i,i',j,j'\in \Ik,\atop i\neq j,i'\neq j',|\{i,i',j,j'\}|=3}(\E[|a_{i\pi(i)}|^3]+\E[ |a_{j\pi(j)}|^3]+\E [|a_{i\pi(j)}|^3]+\E [|a_{j\pi(i)}|^3])\notag\\
    =&  \frac{ 8 \delta \{\nk-2\}}{\nk^2}\left[\{\nk-1\}\sum_{i\in \Ik} \E [|a_{i\pi(i)}|^3]+\sum_{i\neq j\in \Ik} \E [|a_{i\pi(j)}|^3] \right]\notag\\
    \le & \frac{ 16\delta }{\nk}\sumijk |a_{ij}|^3.
\end{align}

For $|\{i,j,i',j'\}|=4$, we obtain the following bound based on the computation in \citet[(2.21)]{chen2015error}:
\begin{align}\label{eq:T3}
    |\mathcal T_{3[k]}|\le & \frac{\delta}{2\nk^3}\sum_{i,j,k,t\in \Ik} |a_{ik}+a_{jt}-a_{it}-a_{jk}|^3\notag\\
    \le & \frac{32 \delta }{\nk}\sumijk |a_{ij}|^3.
\end{align}
Based on \eqref{eq:T1}, \eqref{eq:T2} and \eqref{eq:T3}, we can bound $B_0^2$ as
\begin{align*}
    B_0^2 \le & \sumk\left(\frac{8\delta}{\nk^2}+\frac{ 16\delta }{\nk}+ \frac{32 \delta }{\nk} \right)\sumijk |a_{ij}|^3\notag\\
    \le & \sumk\frac{ 56\delta }{\nk} \sumijk |a_{ij}|^3\\
    \le & 32\times 56 \left(\sumk\sumijk |a_{ij}|^3/\nk\right)^2,
\end{align*}
where the last inequality holds from Theorem \ref{thm:bound for delta and B02}(i).

\subsection{Proof of Proposition \ref{prop:L1}}
\begin{proof}
We begin by expressing the difference between \(W_{A,\pi}^*\) and \(W_{A,\pi}\) as follows:
\begin{align*}
|W_{A,\pi}^*-W_{A,\pi}| &= UW_{A,\pi^\dagger} +(1-U)W_{A,\pi^\ddagger}-W_{A,\pi}\\
&= U\sum_{k=1}^K\sumik a_{i,\pi^\dagger(i)}+(1-U)\sumk \sumik a_{i,\pi^\ddagger(i)}-\sumk \sumik a_{i,\pi(i)}\\
&= \sum_{i\in\mathcal{R}}\left(U a_{i,\pi^\dagger (i)}+(1-U)a_{i,\pi^\ddagger(i)}-a_{i,\pi(i)}\right),
\end{align*}
where $\mathcal{R}=\{I^\dagger,J^\dagger, \pi^{-1}(P^\dagger ),\pi^{-1}(Q^\dagger )\}\subset \mathcal I_{[B^\dagger]}.$
We review an upper bound on the $L_1$ distance between non-stratified linear permutation statistics and its zero-bias transformation as follows.
\begin{lemma}[Lemma 4.7 of \citet{chen2011normal}]\label{lemma:chen2011 lemma4.7}
For $\pi\sim \textup{Uniform}(\Pi_K)$ with $K=1$, we have 
\[
\E \left[|W_{A,\pi}^*-W_{A,\pi}|\right]\le \frac{\sum_{1\le i,j\le n}|a_{ij}|^3}{(n-1)\sigma_{A }^2}\left(8+\frac{28}{(n-1)}+\frac{4}{(n-1)^2}\right).
\]
\end{lemma}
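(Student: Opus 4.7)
The plan is to apply the zero-bias construction from Section \ref{subsec:pi dagger construct} specialized to a single stratum ($K=1$). Sample $(I,J,P,Q)\in\{1,\ldots,n\}^4$ with joint density proportional to $\Delta_{IJPQ}^2:=(a_{IP}+a_{JQ}-a_{IQ}-a_{JP})^2$, build $\pi^\dagger$ with $\{\pi^\dagger(I),\pi^\dagger(J)\}=\{P,Q\}$ and agreeing with $\pi$ outside $\mathcal R:=\{I,J,\pi^{-1}(P),\pi^{-1}(Q)\}$, set $\pi^\ddagger=\pi^\dagger\circ\tau_{IJ}$, and let $W^*_{A,\pi}=UW_{A,\pi^\dagger}+(1-U)W_{A,\pi^\ddagger}$ with $U\sim\textup{Uniform}[0,1]$ independent of everything else; Lemma \ref{thm:zerobias} confirms this is a zero-bias transformation. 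A direct check using $\pi^\ddagger=\pi^\dagger\circ\tau_{IJ}$ shows $W_{A,\pi^\ddagger}-W_{A,\pi^\dagger}=\mp\Delta_{IJPQ}$ (sign depending on whether $\pi^\dagger$ sends $I$ to $P$ or $Q$), yielding the convenient decomposition
\[
W^*_{A,\pi}-W_{A,\pi}\;=\;(W_{A,\pi^\dagger}-W_{A,\pi})\;\mp\;(1-U)\,\Delta_{IJPQ}.
\]
By Proposition \ref{prop:permutation property} applied with $K=1$, the first summand equals $\sum_{i\in\mathcal R}(a_{i,\pi^\dagger(i)}-a_{i,\pi(i)})$, so $|W^*_{A,\pi}-W_{A,\pi}|$ is pointwise bounded by a sum of at most twelve entries of $|A|$ (at most three per index in $\mathcal R$, plus the four entries comprising $\Delta_{IJPQ}$).

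Next I would take expectations term by term. Independence of $U$ converts the $U$, $1-U$ factors into $1/2$. What remain are two types of expected entries: (a) $\E|a_{\alpha,\beta}|$ where $(\alpha,\beta)$ is one of the labeled pairs drawn from the biased four-tuple $(I,J,P,Q)$; and (b) $\E|a_{i,\pi(i)}|$ for some $i\in\mathcal R$ whose $\pi$-image, conditional on $(I,J,P,Q)$ and on $i\in\mathcal R$, is uniform on $\{1,\ldots,n\}\setminus\{P,Q\}$. Writing (a) explicitly as
\[
\frac{1}{4n^2(n-1)\sigma_A^2}\sum_{(i,j,p,q)}\Delta_{ijpq}^2\,|a_{ip}|
\]
(and similarly for the pairs $(I,Q),(J,P),(J,Q)$), the elementary inequality $\Delta_{ijpq}^2|a_{ip}|\le C_0\bigl(|a_{ip}|^3+|a_{jq}|^3+|a_{iq}|^3+|a_{jp}|^3\bigr)$---which follows from $\Delta^2\le 4(a_{ip}^2+a_{jq}^2+a_{iq}^2+a_{jp}^2)$ combined with Young's inequality $x^2y\le\tfrac23|x|^3+\tfrac13|y|^3$ applied termwise---reduces each (a)-sum to a multiple of $\sum_{i,j}|a_{ij}|^3/\bigl((n-1)\sigma_A^2\bigr)$. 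The type-(b) terms reduce to the same form after one further $(n-2)^{-1}$-weighted average over the free coordinate, to which the same inequality applies.

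The main obstacle is bookkeeping the coincidences among the four indices in $\mathcal R$. Generically $|\mathcal R|=4$, but the events $\{I=\pi^{-1}(P)\}$, $\{J=\pi^{-1}(Q)\}$, and their symmetric counterparts each have conditional probability of order $1/(n-1)$ under the biased joint law, while simultaneous coincidences are of order $1/(n-1)^2$. Partitioning the calculation into the three regimes $|\mathcal R|\in\{4,3,2\}$, the generic case will deliver the leading constant $8$, the single-coincidence case the correction $28/(n-1)$, and the double-coincidence case the correction $4/(n-1)^2$. The delicate point is verifying that the numerical constants match exactly: one must track how each of the at-most-twelve pointwise terms enters each regime, how the biased weight $\Delta^2$ couples to the coincidence probability, and how the four symmetric labeled pairs $(I,P),(I,Q),(J,P),(J,Q)$ combine in the sum. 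This is precisely the content of Lemma 4.7 in Chen--Goldstein--Shao (2011), so in practice I would simply invoke their result rather than redo the constant-tracking ourselves.
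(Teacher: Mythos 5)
Your proposal is consistent with what the paper does: the paper does not prove this lemma at all but imports it verbatim from \citet[Lemma 4.7]{chen2011normal}, and your sketch correctly identifies the underlying zero-bias construction (the same one the paper builds in Section \ref{subsec:pi dagger construct}, specialized to $K=1$) before ultimately deferring to the same citation for the exact constants. The outline of the argument --- the decomposition $W^*_{A,\pi}-W_{A,\pi}=(W_{A,\pi^\dagger}-W_{A,\pi})\mp(1-U)\Delta_{IJPQ}$, reduction to third moments via $\Delta^2|a|\le C\sum|a|^3$, and the regime split over $|\mathcal R|\in\{4,3,2\}$ producing the three terms $8$, $28/(n-1)$, $4/(n-1)^2$ --- faithfully reflects how the cited result is obtained, so there is no gap relative to the paper's treatment.
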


We apply Lemma \ref{lemma:chen2011 lemma4.7} within each stratum and obtain the following inequality:
\[
\E \left[|W_{A,\pi}^*-W_{A,\pi}|\,\bigg|\,B^\dagger=k\right] \le \frac{\sumijk|a_{ij}|^3}{(\nk-1)\sigma_{A[k]  }^2}\left(8+\frac{28}{(\nk-1)}+\frac{4}{(\nk-1)^2}\right).
\]

Now, we can compute the unconditional expectation by incorporating the conditional expectation:
\begin{align*}
\E [|W_{A,\pi}^*-W_{A,\pi}|] &\le \sumk \frac{\sigma_{A[k]  }^2}{\sigma^2_A}\frac{\sumijk|a_{ij}|^3}{(\nk-1)\sigma_{A[k]  }^2}\left(8+\frac{28}{(\nk-1)}+\frac{4}{(\nk-1)^2}\right)\\
&= \sumk \frac{\sumijk|a_{ij}|^3}{(\nk-1)\sigma_A^2}\left(8+\frac{28}{(\nk-1)}+\frac{4}{(\nk-1)^2}\right)\\
&\le 80 \sum_{k=1}^K \frac{\beta_{A[k]  }}{\nk}.
\end{align*}

This completes the proof.

\end{proof}

\subsection{Proof of Lemma \ref{lemma:truncation}}
\subsubsection{Upper bound on $\pr(W_{A,\pi}\neq W_{A',\pi})$}
    Let $\Lambda_{[k]  }=\{(i,j)\in  \Ik ^2:|a_{ij}|>1/2\}$, $\Lambda_{[k]  i}=\{j\in\Ik :(i,j)\in \Lambda_{[k]  }\}$, and $\Lambda =\cup_{k=1}^K \Lambda_{[k]  }$. We have 
$$|\Lambda_{[k]  }|= \sum_{i,j\in \Ik }\bold {1}(|a_{ij}|>1/2)\le 8\sumijk|a_{ij}|^3=8\beta_{A[k]  }.$$

Then we have 
$$\{W_{A,\pi}\neq W_{A',\pi}\}\subset \cup_{i=1}^n \{(i,\pi(i))\in \Lambda\},$$
which implies
\begin{align*}
    \pr(W_{A,\pi}\neq W_{A',\pi})\le & \E\left[\sumk\sum_{i\in\Ik }\bold{1}[(i,\pi(i))\in \Lambda]\right]=\sumk \sum_{i\in \Ik }|\Lambda_{[k]  i}|/\nk\\
    =&\sumk |\Lambda_{[k]  }|/\nk\le 8\sumk\frac{\beta_{A[k]  }}{\nk}.
\end{align*}
\subsubsection{Upper bound on $|\mu_{A'}|$}\label{subsubsec: bound for mu_A'}
From H\"older's inequality, for all $r\in(0,3]$, we have
\begin{align}\label{eq:holder r}
    \sum_{i,j\in{\Lambda_{[k]  } }}|a_{ij}|^r\le |\Lambda_{[k]  }|^{1-r/3}\left(\sumijk|a_{ij}|^3\right)^{r/3}\le c_1\beta_{A[k]  }.
\end{align}
Similarly, because 
$$|\Lambda_{[k]  i}|=\sum_{j\in \Ik }\bold {1}(|a_{ij}|>1/2)\le 8\sum_{j\in\Ik } |a_{ij}|^3,$$
we have 
$$\left|\sum_{j\in \Lambda_{[k]  i}}a_{ij}\right|\le |\Lambda_{[k]  i}|^{2/3}(\sum_{j\in \Lambda_{[k]  i}}|a_{ij}|^3)^{1/3}\le 4\sum_{j\in \Ik }|a_{ij}|^3\le c_1\beta_{A[k]  }.$$
Then since $\sumijk a_{ij}=0$, we have
$$|\mu_{A'}|=\left|\sumk\frac 1 {\nk}\sumijk a_{ij}'\right|=\left|\sumk\frac 1 {\nk}\sum_{(i,j)\in \Lambda_{[k]  }}a_{ij}\right|\le c_1\sumk\frac{\beta_{A[k]  }}{\nk}.$$
\subsubsection{Upper bound on $|\sigma_{A'}^2-1|$}
Recall $\sumk \sigma_{A[k]}^2=1$ and $\sigma_{A[k]}^2=(\nk-1)^{-1}\sumijk(a_{ij}^2-\bar a_{[k]i
\cdot}^2-\bar a_{[k]
\cdot j}^2+\bar a_{[k]\cdot
\cdot}^2)$. We have
\begin{align}\label{eq:truncation sigmaA' 1}
    |\sigma_{A'}^2-1|\le& \sumk|\sigma_{A'[k] }^2-\sigma_{A[k]  }^2|\notag\\
    =&\sumk \frac{1}{\nk-1}|\sumijk (a'_{ij})^2-\sumijk (\bar a'_{[k]i\cdot})^2-\sumijk (\bar a'_{[k]\cdot j})^2+\sumijk (\bar a'_{[k]\cdot\cdot})^2+\sumijk a_{ij}^2|\notag\\
    \le& \sumk \frac{1}{\nk-1}\left(\sum_{i,j\in\Lambda_{[k]  }}a_{ij}^2+\sumijk \bar a'^2_{[k]  i\cdot}+\sumijk \bar a'^2_{[k]  \cdot j}+\sumijk \bar a'^2_{[k]  \cdot\cdot}\right).
\end{align} 
By \eqref{eq:holder r} with $r=2$, we bound the first term of the right-hand side of \eqref{eq:truncation sigmaA' 1}  as
$$\frac{1}{\nk-1}\sum_{i,j\in \Lambda_{[k]  }} a_{ij}^2\le \frac{2c_1\beta_{A[k]  }}{\nk}.$$

For the second term of the right-hand side of \eqref{eq:truncation sigmaA' 1}, we have $$|\bar a'_{[k] i\cdot}|=|\frac{1}{\nk}\sum_{j\in\Ik }a'_{ij}|=|\frac{1}{\nk}\sum_{j\in\Lambda_{[k]  i}}a_{ij}| \le \frac{4}{\nk}\sum_{j\in\Ik }|a_{ij}|^3\le\frac{4\beta_{A[k]  }}{\nk}.$$

Therefore, we have
$$\frac{1}{\nk-1}\sumijk \bar a'^2_{[k]  i\cdot}\le \frac{4\beta_{A[k]  }}{\nk-1}\sum_{i\in\Ik }|\bar a'_{[k] i\cdot}|\le \frac{16\beta_{A[k]  }}{\nk(\nk-1)}\sumijk|a_{ij}|^3\le 32\beta_{A[k]  }^2/\nk^2,$$
with the same bound holding when $i$ and $j$ are interchanged. In addition, by Section \ref{subsubsec: bound for mu_A'}, we have $$|\bar a'_{[k] \cdot\cdot}|=|\mu_{A'[k]}|/\nk\le c_1\beta_{A[k]  }/\nk^2,$$
and so $$\frac{1}{\nk-1}\sumijk \bar a'^2_{[k]  \cdot\cdot}\le \frac{\nk^2}{\nk-1}\frac{c_1^2\beta_{A[k]  }^2}{\nk^4}\le 2c_1^2\beta_{A[k]  }^2/\nk^3.$$
Hence
$$|\sigma_{A'[k] }^2-\sigma_{A[k]  }^2|\le \frac{\beta_{A[k]  }}{\nk}(2c_1+64\beta_{A[k]  }/\nk+2c_1^2\beta_{A[k]  }/\nk^2).$$

For any $\varepsilon_1<1$, take $c_2=2c_1+64+2c_1^2$. If we further require
$\varepsilon_1 \in(0,1/(3c_2))$, then when $\sumk \beta_{A[k]  }/\nk\le \varepsilon_1$, we have
\begin{align}\label{eq:sigma A' 1 bound 1/3}
    |\sigma^2_{A'}-1|\le \sumk |\sigma_{A'[k] }^2-\sigma_{A[k]  }^2|\le \frac 1 3 \sumk  \frac{\beta_{A[k]  }}{\nk} \le \frac 1 3.
\end{align}

\subsubsection{Proof of $(A')^\textup{s}\in M_n^1$ and $\beta_{A'[k] }\le c_2\beta_{A[k]  }$}
By \eqref{eq:sigma A' 1 bound 1/3}, we have $\sigma_{A'}^2>2/3$, which implies $\sigma_{A'}>2/3$. When $\sumk \beta_{A[k]  }/\nk\le \varepsilon_1$, the elements of $({A'})^\textup{s}$ satisfy 
$$|a'_{ij}-\bar a'_{[k] i\cdot}-\bar a'_{[k] \cdot j}+\bar a'_{[k] \cdot\cdot}|/\sigma_{A'}\le \frac 3 4+\frac 3 2 (|\bar a'_{[k]i\cdot}|+|\bar a'_{[k]\cdot j}|+|\bar a'_{[k]\cdot\cdot}|),$$
 hence considering the bound of $|\bar a'_{[k] i\cdot}|$, $|\bar a'_{[k] \cdot j}|$ and $|\bar a'_{[k] \cdot\cdot}|$, we have that there exists $\varepsilon_1$ small enough such that the elements of $({A'})^\textup{s}$ are all bounded by $1$, hence $({A'})^\textup{s}\in M_n^1$.
 Lastly,  we have $$\beta_{A'[k] }=\frac{\sumijk |a'_{ij}|^3}{\sigma_{A'}^3}\le c_2\beta_{A[k]  }.$$

\subsection{Proof of Lemma \ref{lemma:scaling}}
	From Lemma \ref{lemma:truncation}, we have 
	\begin{align*}
		&\sup_{t\in \mathbb R}|\pr(W_{A,\pi}\le t)-\Phi(t)|\\
		\le & \sup_{t\in \mathbb R}|\pr(W_{A',\pi}\le t)-\Phi(t)|+\sup_{t\in \mathbb R}|\pr(W_{A',\pi}\le t)-\pr(W_{A,\pi}\le t)|\\
		\le & \sup_{t\in \mathbb R}|\pr(W_{A',\pi}\le t)-\Phi(t)|+c_1\sumk\beta_{A[k]  }/\nk \\
		\le & \sup_{t\in \mathbb R}|\pr(W_{A',\pi}\le t)-\Phi(\frac{t-\mu_{A'}}{\sigma_{A'}})|+\sup_{t\in \mathbb R}|\Phi(\frac{t-\mu_{A'}}{\sigma_{A'}})-\Phi(t)|+c_1\sumk\beta_{A[k]  }/\nk \\
		\le & \sup_{t\in \mathbb R}|\pr(W_{({A'})^\textup{s}}\le t)-\Phi(t)|+\sup_{t\in \mathbb R}|\Phi(\frac{t-\mu_{A'}}{\sigma_{A'}})-\Phi(t)|+c_1\sumk\beta_{A[k]  }/\nk.
	\end{align*}
	Hence we only need to prove that there exists some $c_{2}$ such that $$\sup_{t\in \mathbb R}|\Phi(\frac{t-\mu_{A'}}{\sigma_{A'}})-\Phi(t)|\le c_{2}\sumk\beta_{A[k]  }/\nk.$$
	
	From the proof of Lemma \ref{lemma:truncation}, we have $|\sigma^2_{A'}-1|\le 1/3$, which implies $\sigma_{A'}\in[2/3,4/3]$. Then we consider the following two cases to prove the lemma.
    \subsubsection{When $|t|\ge c_1\sumk\beta_{A[k]  }/\nk$}
We can show that $|t\exp(-at^2/2)|\le 1/\sqrt{a}$ for $a>0.$
		Hence \begin{align*}
			 \left|t\exp\big(-\frac{9}{32}(t-\mu_{A'})^2\big)\right|\le &\left|(t-\mu_{A'})\exp\big(-\frac{9}{32}(t-\mu_{A'})^2\big)\right|+|\mu_{A'}|\\
		\le & \frac 4 3+|\mu_{A'}| \le  \frac 4 3(1+|\mu_{A'}|).
		\end{align*}
		Since $\sigma_{A'}\ge 2/3$, from Lemma \ref{lemma:truncation}, we have		$$|\sigma_{A'}-1|=\frac{\sigma_{A'}^2-1}{\sigma_{A'}+1}\le c_2\sumk \beta_{A[k]  }/\nk.$$		
Because $|\mu_{A'}|\le c_1\sumk\beta_{A[k]  }/\nk$, we know that $t$ and $(t-\mu_{A'})/\sigma_{A'}$ are on the same side of the origin.
		Under the same logic of \citet[page 179]{chen2011normal}, we obtain
		$$\left|\Phi(\frac{t-\mu_{A'}}{\sigma_{A'}})-\Phi(t)\right|\le \frac{2}{\sqrt{2\pi}}|\sigma_{A'}-1|(1+|\mu_{A'}|)+\frac 3 4 |\mu_{A'}|.$$
	From Lemma \ref{lemma:truncation}, $|\mu_{A'}|\le c_1\sumk \beta_{A[k]  }/\nk\le c_1\varepsilon_2$ and further
		\begin{align*}
			& \frac{2}{\sqrt{2\pi}}|\sigma_{A'}-1|(1+|\mu_{A'}|)+\frac 3 4 |\mu_{A'}|\\
			\le & \frac{2c_2}{\sqrt{2\pi}}\sumk \beta_{A[k]  }/\nk (1+c_1\varepsilon_2)+\frac {3c_1}{4}\sumk \beta_{A[k]  }/\nk\\
			=&\left\{\frac{2c_2}{\sqrt{2\pi}}(1+c_1\varepsilon_2)+\frac {3c_1}{4}\right\}\sumk \beta_{A[k]  }/\nk.
		\end{align*}
		\subsubsection{When $|t|< c_1\sumk\beta_{A[k]  }/\nk$}
We have that $(t-\mu_{A'})/\sigma_{A'}$ lies in the interval $[3(t-\mu_{A'})/2,3(t-\mu_{A'})/4]$. It implies that
 $$|(t-\mu_{A'})/\sigma_{A'}|\le \frac{3(|t|+|\mu_{A'}|)}{2},$$
then we obtain
 \begin{align*}
\left|\Phi(\frac{t-\mu_{A'}}{\sigma_{A'}})-\Phi(t)\right|
	        	\le  \frac{1}{\sqrt{2\pi}}\left|\frac{t-\mu_{A'}}{\sigma_{A'}}-t\right|
	        	\le  \frac{1}{\sqrt{2\pi}}\left(3|t|+2|\mu_{A'}|\right)
	        	\le  \frac{5c_1}{\sqrt{2\pi}}\sumk \beta_{A[k]  }/\nk .
	        \end{align*}

\subsection{Proof of Lemma \ref{lemma:inductive}}
\begin{proof}
Let $\mathcal C=\pi(\mathcal R)$ and $m=\nk-l$. When $m=0$, we have $\mu_{D[k]}=0$ for $k=1,\ldots, K$ and thus $\mu_D=0$. When $m\ge 1$,
    $$|\bar d_{[k] i\cdot}|=\frac 1 m\left|\sum_{j\in\Ik \setminus \mathcal C}d_{ij}\right|=\frac 1 m \left|\sum_{j\in\Ik \setminus \mathcal C}a_{ij}\right|= \frac 1 m \left|\sum_{j\in\mathcal C}a_{ij}\right|\le \frac {|\mathcal C|} m \le \frac 4 m$$
with the same bound holding when the roles of $i$ and $j$ are interchanged. Similarly,  as $\bar a_{ [k]\cdot\cdot}=0,$
\begin{align*}
    |\bar d_{[k]\cdot\cdot}|=&\frac{1}{m^2}|\sum_{i\in \Ik \setminus \mathcal R\atop j\in \Ik \setminus \mathcal C} d_{ij}|=\frac{1}{m^2}|\sum_{i\in  \mathcal R\atop j\in  \mathcal C} a_{ij}|\le \frac{|\mathcal R|+|\mathcal C|}{m}\le \frac 8 m.
\end{align*}

Then $\mu_D=\sum_{k'=1}^K\mu_{D[k']}=\mu_{D[k] }=m \bar d_{[k]\cdot\cdot}\le 8$.
When $m=0,1$, we have $$\sigma_D^2=\sum_{k'=1}^K\sigma_{D[k']}^2=\sum_{k'=1}^K\sigma_{A[k']}^2=\sigma_A^2=1.$$ When $m\ge 2$, we have 
\begin{align*}
    \sigma_D^2 =&\sigma_{D[k] }^2+\sum_{k'\neq k} \sigma_{D[k']}^2\\
    =&\sigma_{D[k] }^2-\frac{1}{m-1}\sumijk a_{ij}^2+\sum_{k'\neq k} \sigma_{A[k']}^2+\frac{1}{m-1}\sumijk a_{ij}^2\\
    \le &\frac{1}{m-1}\left(\Big|\sum_{i\in \Ik \setminus \mathcal R\atop j\in \Ik \setminus \mathcal C}a_{ij}^2-\sumijk a_{ij}^2 \Big|+m\sum_{i\in \Ik \setminus \mathcal R } \bar d_{[k] i\cdot}^2+m\sum_{j\in \Ik \setminus \mathcal C } \bar d_{[k] \cdot j}^2+m^2 \bar d_{[k] \cdot\cdot}^2 \right)+\frac{\nk-1}{m-1}\\
    \le & \frac{1}{m-1}\left(\sum_{i\in \mathcal R,j\in \mathcal C} a_{ij}^2 +96\right)+\frac{\nk-1}{m-1}\\
    \le & \frac{1}{m-1}\left(8\varepsilon_3^{2/3}\nk +96\right)+\frac{\nk}{m-1},
\end{align*}
where the last equation holds due to H\"older's inequality that 
$$\sumjk a_{ij}^2 \le \nk^{1/3} \left(\sumjk |a_{ij}|^3\right)^{2/3}\le \nk^{1/3}\beta_{A[k]  }^{2/3}.$$
The same inequality holds when the roles of $i$ and $j$ are interchanged, and so, when $\beta_{A[k]  }/\nk\le \varepsilon_3,$
$$\sum_{i\in\mathcal R,j\in \mathcal C} a_{ij}^2\le 
 \sum_{i\in \mathcal R} \sumjk a_{ij}^2+\sum_{j\in \mathcal C}\sumik a_{ij}^2\le 2 l\cdot n^{1/3}\beta_{A[k]  }^{2/3}\le 8\varepsilon_3^{2/3}\nk.$$

Since $m\ge 2$ and $m=\nk-l,l\in\{2,3,4\}$, we have $\nk/(m-1)=1+(l+1)/(m-1)\le 6$.
Therefore, $$\sigma_D^2\le 48\varepsilon_3^{2/3}+102.$$
    
\end{proof}

\subsection{Proof of Lemma \ref{lemma:weight S}}
\begin{proof}
We have
    \begin{align*}
        \max_{1\le k\le K} S_{[k]}\left(\sumk w_{[k]}S_{[k]}\right)\ge \sumk w_{[k]}S_{[k]}^2,\quad \left(\sumk w_{[k]}S_{[k]}\right)\ge \min_{1\le k\le K} S_{[k]},
    \end{align*}
    which lead to Lemma \ref{lemma:weight S}.
\end{proof}

\subsection{Proof of Lemma \ref{lemma:hyper}}
We prove a CLT for $n_{[k]1}/\{n_{[k]1}+\bold 1 (n_{[k]1}=0)\}$ and verify the uniformly integrable to obtain the order of $\var(\nk/n_{[k]1}\mid \mathcal D_1)$.

\subsubsection{CLT for $n_{[k]}/\{n_{[k]1}+\bold 1 (n_{[k]1=0})\}$}
By Lemma \ref{lemma:hyper_beb}, if $ \pik\rightarrow \pik^*\in(0,1)$ and $n_1/n\rightarrow p\in(0,1)$ as $n\rightarrow \infty$, 
    then $$ \frac{\sqrt{\nk(n-\nk)n_1(n-n_1)/n^3}}{10\max\left\{\frac{\min(n_1/n,1-n_1/n)+4}{4(1-\min(n_1/n,1-n_1/n))},2\right\}}\rightarrow\infty,$$
and $$\frac{1}{\nk}\sigma^2_{\mathrm{hyp}}= (n-\nk)n_1(n-n_1)/n^3 \rightarrow (1-\pik^*)p(1-p)\in(0,1) $$
and for the normal approximation result, because the exponential function converges faster than the polynomial function, then we have 
$$\sup_{t\in\mathbb R}\left|\pr\left(\frac{n_{[k]1}-n_1\pik}{\sigma_{\mathrm{hyp}}}\le t\right)-\Phi(t)\right|=O\left(\frac{1}{\sigma_{\mathrm{hyp}}}\right)=O\left(\frac{1}{\sqrt{\nk}}\right).$$
The Kolmogorov distance tends to zero as $n\rightarrow \infty.$
Therefore, 
\begin{align}\label{eq:clt for nk inver hyp}
    \sqrt{\nk}\left(\frac{n_{[k]1}}{\nk}-p\right)\xrightarrow{d}\mathcal N(0,(1-\pik^*)p(1-p)).
\end{align}
Furthermore, 
$$\pr\left\{\frac{\bold 1(n_{[k]1}=0)}{\sqrt{\nk }}\neq 0\right\}=\frac{n-n_1}{n}\frac{n-n_1-1}{n}\cdots\frac{n-n_1-(\nk-1)}{n-(\nk-1)}\le (\frac{n-n_1}{n})^{\nk}\rightarrow 0.$$
Therefore, $\bold 1(n_{[k]1}=0)/{\sqrt{\nk }}\xrightarrow{\pr} 0.$ Combine \eqref{eq:clt for nk inver hyp}, we have
$$\sqrt{\nk}\left\{\frac{n_{[k]1}+\bold 1(n_{[k]1}=0)}{\nk}-p\right\} \xrightarrow{d}\mathcal N(0,(1-\pik^*)p(1-p)).$$
By delta method with $g(x) = 1/x$, we have 
\begin{align}\label{eq:clt for nk/nk1}
    \sqrt{\nk}\left\{\frac{\nk}{n_{[k]1}+\bold 1(n_{[k]1}=0)}-\frac 1 p\right\}\xrightarrow{d}\mathcal N\left\{0,\frac{(1-\pik^*)(1-p)}{p^3}\right\}.
\end{align}

\subsubsection{Uniformly Integrable Property}
We will show that
\begin{align}\label{eq:UI}
  \lim_{b\rightarrow \infty} \sup_{(n_{[k]1})_{k=1}^K}  \E\left[n \left\{\frac{\nk}{n_{[k]1}+\bold 1(n_{[k]1}=0)}-\frac{1}{p}\right\}^2 \bold 1\left\{\left|\frac{\nk}{n_{[k]1}+\bold 1(n_{[k]1}=0)}-\frac{1}{p}\right|>b\right\}\right]=0.
\end{align}
When $b> p^{-1}$, we only need to consider $n_{[k]}(n_{[k]1}+\bold 1(n_{[k]1}=0))^{-1}> p^{-1}+b$.
Then we have that 
\begin{align*}
    &\E\left[n \left\{\frac{\nk}{n_{[k]1}+\bold 1(n_{[k]1}=0)}-\frac{1}{p}\right\}^2 \bold 1\left\{\left|\frac{\nk}{n_{[k]1}+\bold 1(n_{[k]1}=0)}-\frac{1}{p}\right|>b\right\}\right]\\
    \le &\E\left[n b^2 \bold 1\left\{\left|\frac{\nk}{n_{[k]1}+\bold 1(n_{[k]1}=0)}-\frac{1}{p}\right|>b\right\}\right]\\
    \le & n b^2 \pr\left\{\frac{\nk}{n_{[k]1}+\bold 1(n_{[k]1}=0)}>b\right\}.
\end{align*}
Because $$\frac{\nk}{n_{[k]1}+\bold 1(n_{[k]1}=0)}\le \nk,$$
for fixed $b$, we only consider $\nk>b$ to make the probability non-zero.

Because 
\begin{align*}
    \pr\left(n_{[k]1}< \frac \nk b \right)=& \pr\left(\text{there exist }[(1-\frac 1 b)\nk]  \text{ control units in the $k$-th stratum}\right)\\
    \le & \binom{\nk}{[(1-\frac 1 b)\nk]}\frac{n_0}{n}\frac{n_0-1}{n-1}\cdots\frac{n_0-[(1-\frac 1 b)\nk]+1}{n-[(1-\frac 1 b)\nk]+1}\\
    \le & \left(\frac{\nk}{[(1-\frac 1 b)\nk]}\right)^{[(1-\frac 1 b)\nk]} (\frac{n_0}{n})^{[(1-\frac 1 b)\nk]}\\
    =& \left(\frac{\nk}{[(1-\frac 1 b)\nk]}\frac{n_0}{n}\right)^{[(1-\frac 1 b)\nk]}.
\end{align*}
As $\nk\rightarrow \infty$, we have that $\nk/[(1-b^{-1})\nk] \rightarrow (1-b^{-1})^{-1}<1.$ When $b\gg n/n_1,$ we have $$\frac{\nk}{[(1-\frac 1 b)\nk]}\frac{n_0}{n}<1.$$

The left-hand side of  \eqref{eq:UI} equals
\begin{align*}
   &\lim_{b\rightarrow \infty}\sup_{\nk:\nk>b} nb^2 \left(\frac{\nk}{[(1-\frac 1 b)\nk]}\frac{n_0}{n}\right)^{[(1-\frac 1 b)\nk]}\\
= &\lim_{b\rightarrow \infty}\sup_{\nk:\nk>b} \pik^{-1} \nk b^2 \left(\frac{1}{1-\frac 1 b}\frac{n_0}{n}\right)^{(1-\frac 1 b)\nk}\\
= &\lim_{b\rightarrow \infty}\sup_{\nk:\nk>b} \pik^{-1} \nk b^2 (1-p)^{\nk}\\
=&0.
\end{align*}

Because
\begin{align*}
  \var\left(\frac{\nk}{n_{[k]1}} \mid \mathcal D_1\right) =\var\left\{\frac{\nk}{n_{[k]1}+\bold 1(n_{[k]1}=0)} -\frac{1}{p}\mid \mathcal D_1\right\}
  \le  \var\left\{\frac{\nk}{n_{[k]1}+\bold 1(n_{[k]1}=0)} -\frac{1}{p}\right\},
\end{align*}
by the uniform integrability properties \eqref{eq:UI} and convergence in distribution \eqref{eq:clt for nk/nk1}, we can obtain that $$\var\left(\frac{\nk}{n_{[k]1}} \mid \mathcal D_1\right)=O\left(\frac{1}{\nk}\right)=O\left(\frac 1 n\right).$$

\subsection{Proof of Lemma \ref{lemma:converge to zero with p lar 2}}

For any positive integer $n$, by Lemma \ref{lemma:rosenthal}, we have
\begin{align*}
  \E\left[\left|\frac 1 n\sumk\sumijk  \frac {X_{ij}} {\nk} \right|^p\right]\le & C_p\left\{\sumk\sumijk  \frac{\E[|X_{ij}|^p]}{\nk^p}+ \left(\sumk\sumijk \frac{\E [X_{ij}^2]}{\nk^2}\right)^{p/2}\right\}\\
  \le & C_p \left\{ K \E[|X_{ij}|^p] + K^{p/2} (\E[X_{ij}^2])^{p/2}\right\}= O(n^{p/2}),  
\end{align*}
where the last inequality holds because $\nk^{p-2}\ge 1$.
 
Then for any positive number $\phi$ and $p>2$, we have that 
$$\sum_{n=1}^\infty \pr\left(\left|\frac 1 n\sumk\sumijk  \frac {X_{ij}} {\nk} \right|\ge \phi\right)\le \phi^{-p} \sum_{n=1}^\infty \frac{ \E\left[\left|\frac 1 n\sumk\sumijk  \frac {X_{ij}} {\nk} \right|^p\right]}{n^{p}}=O\left(\sum_{n=1}^\infty \frac{1}{n^{p/2}}\right)<\infty.$$
    Then the Borel--Cantelli Lemma implies $n^{-1}\sumk\sumijk X_{ij}/\nk \xrightarrow{a.s.} 0.$

\end{document}